\documentclass[12pt, reqno]{amsart}
\usepackage[foot]{amsaddr}

\usepackage[mathscr]{eucal}
\usepackage[all]{xy}
\usepackage{graphicx}
\usepackage{caption}
\usepackage{subcaption}
\usepackage{multicol, multirow}
\usepackage{amssymb,amsfonts,amsthm}
\usepackage{color}
\usepackage[centertags]{amsmath}
\usepackage{empheq}
\usepackage{geometry}
\usepackage{algorithm}
\usepackage{algpseudocode}
\usepackage{hyperref}
\usepackage{mathrsfs}
\usepackage{enumerate}
\usepackage{esint}
\usepackage{IEEEtrantools}
\usepackage{booktabs}
\usepackage{float}

\theoremstyle{plain}
\newtheorem{thm}{Theorem}[section]
\newtheorem{prop}[thm]{Proposition}
\newtheorem{lem}[thm]{Lemma}

\theoremstyle{definition}

\newtheorem{remark}[thm]{Remark}

\numberwithin{equation}{section}
\numberwithin{figure}{section}
\numberwithin{algorithm}{section}

\title[]{Hamiltonian Interface Dynamics for Reduced-Order Optimization of Incompressible Mixing}

\author{Ziqian Li$^1$$^2$}
\address{$^1$Chair for Dynamics, Control, Machine Learning and Numerics, Department of Mathematics, Friedrich-Alexander-Universit\"{a}t Erlangen-N\"{u}rnberg, 91058 Erlangen, Germany}
\address{$^2$School of Mathematics, Jilin University, Changchun, Jilin 130012, China}

\author{Enrique Zuazua$^1$$^3$$^4$}
\address{$^3$Chair of Computational Mathematics, Deusto University, 48007 Bilbao, Basque Country, Spain}
\address{$^4$Departamento de Matem\'aticas, Universidad Aut\'onoma de Madrid, 28049 Madrid, Spain}

\email{ziqian.li@fau.de, enrique.zuazua@fau.de}

\begin{document}

\keywords{%
Fluid mixing, optimal control, optimal design, Hamiltonian dynamics, model reduction, interface length, symplectic integration.}

\begin{abstract}
We develop a reduced-order framework for optimizing mixing in two-dimen-sional incompressible flows. Instead of optimizing the full transport PDE, the method maximizes the length of advected material interfaces, leading to a finite-dimensional Hamiltonian control problem based on parametrized stream functions. We derive the continuous adjoint equations and reduced gradients, and discretize the forward and adjoint dynamics with the implicit midpoint rule. The resulting discrete adjoint is algebraically consistent with the derivative of the fully discrete objective, up to the tolerance of the nonlinear midpoint solves. The approach applies to bounded two-dimensional domains with smooth finite-dimensional stream-function parametrizations. Numerical experiments on cellular-flow and Doswell frontogenesis benchmarks show that the optimized time-dependent Hamiltonians generate near-exponential interface stretching and substantially faster decay of the $\dot{H}^{-1}$ mix-norm, in contrast with the polynomial behavior observed for stationary flows. When evaluated on a common reference transport solver, the interface-based controls produce faster $\dot{H}^{-1}$ decay than a Eulerian Sobolev-norm optimizer under a matched setup, while substantially reducing computational cost. We also identify a limitation of the reduced model: increasing the control basis may further improve the interface-length objective without yielding proportional gains in $\dot{H}^{-1}$ mixing, confirming that interface length is an effective but not fully faithful proxy for mixing in geometrically complex regimes.
\end{abstract}

\maketitle

\section{Introduction}

\subsection{Problem statement and motivation}\label{sec:problem}
Understanding and controlling mixing by incompressible transport is a central issue in fluid mechanics, scientific computing, and applications ranging from microfluidics to geophysical transport. In this paper, we consider a passive scalar $\theta$ transported by an incompressible velocity field $\mathbf v$ in a bounded planar domain $\Omega\subset\mathbb R^2$. The governing model is the transport equation
\begin{equation}\label{eq:transport}
\partial_t\theta+\mathbf v\cdot\nabla\theta=0
\qquad\text{in }(0,T)\times\Omega,
\end{equation}
together with the incompressibility and no-penetration conditions
\begin{equation}\label{eq:incomp_bc}
\nabla\cdot\mathbf v=0
\qquad\text{in }(0,T)\times\Omega,
\qquad
\mathbf v\cdot\mathbf n_\Omega=0
\qquad\text{on }(0,T)\times\Gamma,
\end{equation}
where $\Gamma:=\partial\Omega$ and $\mathbf n_\Omega$ is the outward unit normal. The \emph{optimal mixing problem} is to choose the stirring field $\mathbf v$ so that the transported scalar becomes mixed as efficiently as possible.

A large part of the modern literature quantifies mixing by negative Sobolev norms, often called \emph{mix-norms}. The homogeneous $\dot{H}^{-1}(\Omega)$ norm
\begin{equation}\label{eq:mixnorm}
\|\theta\|_{\dot{H}^{-1}(\Omega)}
:=\sup\Bigl\{
\int_\Omega\theta\,\phi\,dx
\;:\;
\int_\Omega\phi\,dx=0,\;
\int_\Omega|\nabla\phi|^2\,dx\le 1
\Bigr\}
\end{equation}
is a natural measure of mixedness, as introduced by Mathew, Mezi\'c, and Petzold \cite{mathew2005multiscale} and widely employed in the analysis of mixing rates \cite{crippa2017cellular}. A smaller value of $\dot{H}^{-1}(\Omega)$ norm corresponds to better mixing, reflecting the transfer of scalar content from large to small spatial scales.

However, negative Sobolev norms are not the only meaningful way to quantify mixing. In many two-phase transport settings, geometric quantities such as interface deformation, geometric mixing scales, or entropy-type functionals also provide natural descriptions of the mixing process; see, for example, \cite{vikhansky2002enhancement, alberti2016exponential, thiffeault2012using}.

The main purpose of this paper is to develop such a geometric viewpoint in a Hamiltonian framework. We take the final length of the material interface as the objective functional to be optimized, rather than the final $\dot{H}^{-1}(\Omega)$ norm of the scalar. This choice is motivated by theoretical, computational, and modelling considerations, as we now explain.

In two space dimensions, an incompressible velocity field admits a stream-function representation $\mathbf v=\nabla^\perp \psi$, under which the characteristic dynamics becomes Hamiltonian. Denoting by $J=\bigl(\begin{smallmatrix}0&-1\\1&\phantom{-}0\end{smallmatrix}\bigr)$ the symplectic matrix, the flow map satisfies $\dot\Phi_\psi=J\nabla_x \psi(t,\Phi_\psi)$. An initial material interface $\gamma_0$ is propagated by $\Phi_\psi$, and its position $X(t,s)=\Phi_\psi(t;\gamma_0(s))$ and tangent vector $a(t,s)=\partial_s X(t,s)$ satisfy the coupled ODE system
\[
\partial_t X = J\nabla_x \psi(t,X),
\qquad
\partial_t a = J\nabla_x^2 \psi(t,X)\,a.
\]
The final length of the interface $\mathcal{L}(\psi;T)=\int_0^1|a(T,s)|\,ds$ is therefore expressed entirely in terms of the Hamiltonian flow and its differential, replacing the Eulerian full-field mixing objective by a Lagrangian geometric one, and the method yields a substantial computational reduction, quantified in Section~\ref{sec:comparison}.

The use of $\mathcal L(\psi;T)$ as a surrogate for mixing has a transparent geometric rationale. For a two-phase configuration whose transition layer is concentrated near the material interface $\Gamma_t$, the area balance between the two phases suggests the heuristic scaling
\begin{equation}\label{eq:geom_scale}
\ell(t)\,\mathcal L(\psi;t)\sim|\Omega|,
\end{equation}
where $\ell(t)$ denotes the typical filament width; growth of the interface length therefore naturally corresponds to decrease of a geometric mixing scale. This argument is heuristic and does not establish equivalence with negative Sobolev mix-norms; we use it solely to motivate the reduced objective, whose predictive value is assessed a posteriori by evaluating the optimized controls on the full transport equation~\eqref{eq:transport}.

The reduction underlying our framework is Hamiltonian/Lagrangian (replacing the full Eulerian transport PDE by ODEs on a small set of advected markers) rather than a multigrid hierarchy in the sense of Glowinski's two-grid philosophy \cite{Glowinski1992}; the spirit, however, is similar: optimization is carried out on a reduced model that captures the essential geometric mechanism, and the resulting controls are ultimately evaluated on the original transport equation (see Section~\ref{sec:comparison}). This viewpoint is also in line with the role of interfaces as primary geometric objects in level-set methods~\cite{OsherSethian1988} and active contour models~\cite{Caselles1997}.

\subsection{Main results and contributions}\label{sec:main_results}

Our contributions are twofold: methodological and computational. On the methodological side, we introduce a reduced optimal-control framework for incompressible mixing based on the geometry of an advected material interface. On the computational side, the reduction yields lower optimization cost and improved $\dot{H}^{-1}$ mix-norm decay (in the tested benchmarks) when the resulting controls are evaluated on the original transport equation.

More precisely, our main results are as follows.

\begin{itemize}
\item \textbf{A geometric reduced model for mixing based on Hamiltonian interface dynamics.}
We show that, in two dimensions, the advection of a material interface by an incompressible flow is naturally governed by a Hamiltonian system: the interface position $X$ and its tangent vector $a=\partial_s X$ satisfy a closed system of ODEs, and the final length of the interface is determined entirely by the Hamiltonian flow and its linearization (Proposition~\ref{prop:interface_ode}). 

\item \textbf{An adjoint-based optimal-control formulation on the reduced model.}
We formulate a regularized control problem that maximizes the final interface length $\mathcal L(\psi;T)$ under an $L^2$ penalty on the control amplitude (equivalent, via the Gram matrix of the basis, to an $L^2(0,T;L^2(\Omega))$ penalty on the stream function). Within a finite-dimensional smooth stream-function ansatz, we prove the existence of an optimal Hamiltonian (Theorem~\ref{prop:H3}), derive the continuous adjoint system, and obtain an explicit gradient formula in terms of adjoint variables transported along marker trajectories. This is done avoiding full-domain PDE solves during the optimization loop.

\item \textbf{A fully discrete optimization framework.}
We discretize the forward and adjoint systems by the implicit midpoint rule, which preserves the Hamiltonian structure and is self-adjoint. As a consequence, the discrete adjoint obtained by transposing the linearized forward map coincides with the midpoint scheme run backward in time, so that the resulting discrete gradient is algebraically consistent with the derivative of the discrete cost at the scheme level, up to the tolerance of the nonlinear solves used to implement the midpoint step. The optimization is then carried out with a Polak--Ribi\`ere conjugate-gradient method and Armijo line search (Algorithm~\ref{alg:optimize}).

\item \textbf{Validation on the full transport PDE.}
The reduced model tracks $O(N_p)$ marker points instead of $O(N_{\mathrm{cell}}^2)$ Eulerian degrees of freedom per time step; this Hamiltonian/Lagrangian reduction is the direct source of the per-iteration cost reduction reported in Section~\ref{sec:comparison}. On cellular-flow and Doswell benchmarks under matched conditions, the resulting controls yield faster $\dot{H}^{-1}$ decay than the Eulerian Sobolev-norm optimizer of \cite{hu2026structure} (Table~\ref{tab:optimizer_comparison}), with interface-length and mix-norm growth/decay well fitted by exponential rates over the simulated horizons.

\item \textbf{Interpretation.}
A plausible heuristic explanation for the observed advantage is that the reduced formulation focuses on large-scale interface stretching and is less sensitive to the fine-grid numerical effects that affect a PDE-level optimizer (Section~\ref{sec:comparison} reports the supporting evidence).

\item \textbf{Limitations of the reduced model.}
Interface length is a geometric proxy, not equivalent to the $\dot{H}^{-1}$ objective: the enlarged-basis experiment of Section~\ref{sec:N4} produces a $16\times$ longer interface yet only a moderately smaller $\dot{H}^{-1}$ rate, so the surrogate's predictive power must ultimately be assessed on the original transport equation.

\end{itemize}

\medskip
\noindent\textbf{Novelty and significance.}
To our knowledge, this is the first framework that combines a Hamiltonian marker/tangent reduced model of an advected interface, an adjoint-based optimization of interface length within a finite-dimensional stream-function ansatz, a symplectic discrete adjoint algebraically consistent with the discrete cost gradient, and a systematic matched-conditions validation of the resulting controls against an Eulerian Sobolev-norm optimizer. The method is not proposed as a universal surrogate for mix-norm optimization: Section~\ref{sec:N4} identifies regimes in which interface stretching ceases to be a faithful proxy for $\dot{H}^{-1}$ mixing.

\subsection{Related work}

Our work draws on three lines of research: the optimal control of fluid mixing, the geometric treatment of moving interfaces, and symplectic time integration for Hamiltonian systems.

\textbf{Optimal control of fluid mixing.}
Control of transport and mixing via pure advection has been extensively studied through open-loop design under physical constraints \cite{d1999control, vikhansky2002enhancement} and through finite-dimensional velocity ansatzes \cite{mathew2007optimal}. Our recent work \cite{hu2026structure} develops a structure-preserving Eulerian optimizer for the Sobolev-norm objective; the present work complements \cite{hu2026structure} by replacing the Eulerian objective with a Lagrangian interface-length formulation.

\textbf{Symplectic integration.}
The characteristic flow of a two-dimensional incompressible velocity field is Hamiltonian, making structure-preserving time integration \cite{HLW06} essential for long-time accuracy. The implicit midpoint rule we employ is symmetric and self-adjoint, ensuring consistency between the discrete forward and adjoint maps.

\subsection{Organization of the paper}
The remainder of the paper is organized as follows. Section~\ref{sec:Hamiltonian} introduces the Hamiltonian interface dynamics and derives the evolution equations for the interface and its tangent vector. Section~\ref{sec:ocp} formulates the length-based optimal control problem, derives the continuous adjoint system, and presents the reduced gradient. Section~\ref{sec:numerics} describes the numerical framework, including the symplectic ODE solver, the discrete-adjoint gradient computation, and the optimization algorithm. Numerical experiments are presented in Section~\ref{sec:numerics_exp}, including stationary benchmarks (Section~\ref{sec:stationary_flows}), optimized flows (Sections~\ref{sec:optimal_flows} and~\ref{sec:optimal_Doswell}), a comparison with the Eulerian optimizer (Section~\ref{sec:comparison}), and an investigation of control-space dimensionality (Section~\ref{sec:N4}). Conclusions and perspectives are given in Section~\ref{sec:conclusions}.

\section{Hamiltonian interface dynamics}\label{sec:Hamiltonian}

In this section, we exploit the stream function formulation of two-dimensional incompressible flows to derive the Hamiltonian structure of the characteristic dynamics. We then show that the advected interface and its tangent vector satisfy a closed ODE system (Proposition~\ref{prop:interface_ode}), whose solution determines the length of the transported material curve. Since the stream-function formulation is intrinsically two-dimensional, we work throughout in two space dimensions.

Let $\Omega \subset \mathbb{R}^2$ be a bounded simply connected domain with $C^2$ boundary $\Gamma$, and let $T>0$. Let
\[
\psi:[0,T]\times\Omega\to\mathbb{R}
\]
be a time-dependent stream function satisfying the homogeneous boundary condition
\[
\psi(t,\cdot)\big|_{\Gamma}=0
\qquad\text{for all } t\in[0,T].
\]
The velocity field is then defined by
\begin{equation}\label{eq:velocity}
\mathbf{v}(t,x):=\nabla^\perp \psi(t,x)
:=\bigl(-\partial_{x_2}\psi(t,x),\,\partial_{x_1}\psi(t,x)\bigr).
\end{equation}
By construction, $\mathbf v$ is divergence-free in $\Omega$ and tangent to $\Gamma$, so \eqref{eq:incomp_bc} is automatically satisfied.

For each $x\in\Omega$, let $\Phi_\psi(t;x)$ denote the flow map generated by $\mathbf v$:
\begin{equation*}
\frac{d}{dt}\Phi_\psi(t;x)=\mathbf{v}\bigl(t,\Phi_\psi(t;x)\bigr),
\qquad
\Phi_\psi(0;x)=x.
\end{equation*}
Using \eqref{eq:velocity}, this takes the Hamiltonian form
\begin{equation}\label{eq:hamiltonian_flow}
\frac{d}{dt}\Phi_\psi(t;x)=J\nabla_x \psi\bigl(t,\Phi_\psi(t;x)\bigr),
\qquad
\Phi_\psi(0;x)=x,
\end{equation}
where
\[
J:=
\begin{pmatrix}
0 & -1\\
1 & 0
\end{pmatrix}
\]
is the canonical symplectic matrix. Thus, the characteristic dynamics admits a natural Hamiltonian structure.

Let $\gamma_0\in W^{2,\infty}([0,1];\Omega)$ (explained in \ref{rem:gamma0_reg}) be a parameterization of an initial material interface. The advected interface is
\[
X(t,s):=\Phi_\psi\bigl(t;\gamma_0(s)\bigr),
\qquad
(t,s)\in[0,T]\times[0,1].
\]
Differentiating with respect to the interface parameter $s$, we define the tangent vector
\[
a(t,s):=\partial_s X(t,s).
\]

\begin{prop}[Interface dynamics]\label{prop:interface_ode}
The pair $(X,a)$ satisfies the coupled ODE system
\begin{subequations}\label{eq:state}
\begin{align}
\partial_t X(t,s) &= J\nabla_x \psi\bigl(t,X(t,s)\bigr),\label{eq:state_X}\\
\partial_t a(t,s) &= J\nabla_x^2 \psi\bigl(t,X(t,s)\bigr)\,a(t,s),\label{eq:state_a}\\
X(0,s) &= \gamma_0(s),\quad a(0,s) = \gamma_0'(s).
\end{align}
\end{subequations}
The length of the advected interface at time $t$ is
\begin{equation}\label{eq:length}
\mathcal{L}(\psi;t):=\int_0^1 |a(t,s)|\,ds.
\end{equation}
\end{prop}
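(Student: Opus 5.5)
The plan is to derive both equations directly from the flow map \eqref{eq:hamiltonian_flow}, using the standard theory of differentiability of ODE solutions with respect to initial data. I will assume $\psi$ is regular enough that $\nabla_x\psi$ is $C^1$ in $x$, uniformly in $t$, for instance $\psi\in C^0([0,T];C^2(\overline\Omega))$, as holds for the smooth finite-dimensional ansatz used later. I will also use the fact that $\mathbf v\cdot\mathbf n_\Omega=0$ on $\Gamma$, which follows from $\psi|_\Gamma=0$ since then $\nabla\psi$ is normal to $\Gamma$. Together with the boundedness of $\Omega$, this makes $\overline\Omega$ invariant, so trajectories starting at $\gamma_0(s)\in\Omega$ exist on all of $[0,T]$ and remain in $\overline\Omega$. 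With these in hand, Cauchy--Lipschitz gives a unique global flow $\Phi_\psi(t;\cdot)$.

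\textbf{Equation for $X$.} Equation \eqref{eq:state_X} with $X(0,s)=\gamma_0(s)$ is immediate: substitute $x=\gamma_0(s)$ into \eqref{eq:hamiltonian_flow}.

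\textbf{Equation for $a$.} For \eqref{eq:state_a}, I will invoke the classical theorem on differentiable dependence on initial conditions. Since the vector field $J\nabla_x\psi$ is $C^1$ in $x$ and continuous in $t$, the map $x\mapsto\Phi_\psi(t;x)$ is $C^1$. Its Jacobian $D_x\Phi_\psi(t;x)$ solves the variational equation
\[
\partial_t D_x\Phi_\psi = J\nabla_x^2\psi(t,\Phi_\psi)\,D_x\Phi_\psi,\qquad D_x\Phi_\psi(0;x)=I.
\]
Because $\gamma_0\in W^{2,\infty}\subset C^1$, the chain rule gives $a(t,s)=D_x\Phi_\psi(t;\gamma_0(s))\,\gamma_0'(s)$. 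Multiplying the variational equation on the right by the $t$-independent vector $\gamma_0'(s)$ then yields \eqref{eq:state_a} with $a(0,s)=\gamma_0'(s)$. I will also record that the mixed partials $\partial_t\partial_sX$ and $\partial_s\partial_tX$ agree, since both equal continuous expressions. This is the only point needing care and hence the main (mild) obstacle: it is exactly what the differentiability theorem supplies, or alternatively follows from differentiating the integral form $X(t,s)=\gamma_0(s)+\int_0^t J\nabla\psi(\tau,X(\tau,s))\,d\tau$ under the integral sign, justified by dominated convergence.

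\textbf{Length formula.} Finally, \eqref{eq:length} is just the definition of the arclength of the $C^1$ curve $s\mapsto X(t,s)$, since $\partial_sX=a$. Its finiteness follows from Gr\"onwall's inequality applied to \eqref{eq:state_a}:
\[
|a(t,s)|\le|\gamma_0'(s)|\exp\Bigl(\int_0^t\|\nabla^2\psi(\tau)\|_\infty\,d\tau\Bigr).
\]
As a side remark, $\operatorname{tr}(J\nabla^2\psi)=0$, so $\det D_x\Phi_\psi\equiv1$, consistent with incompressibility. This fact is not needed for the proof.
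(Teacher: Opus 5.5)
Your proposal is correct and follows the same route as the paper, which simply differentiates \eqref{eq:state_X} in $s$ and interchanges $\partial_s\partial_t X=\partial_t\partial_s X$ formally. Your variational equation for $D_x\Phi_\psi$, together with the chain rule $a=D_x\Phi_\psi(t;\gamma_0(s))\,\gamma_0'(s)$, is exactly the rigorous justification of that interchange. One small correction: the later ansatz only has $u\in L^2(0,T;\mathbb R^N)$, so $\psi$ is not $C^0$ in time there, and you would need the Carath\'eodory version of differentiable dependence, with $\partial_t a$ holding for a.e.\ $t$, which goes through unchanged thanks to the $L^1$-in-time bound \eqref{eq:pt_bound}.
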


\begin{proof}
Equation \eqref{eq:state_X} is the Hamiltonian ODE \eqref{eq:hamiltonian_flow} evaluated at $x=\gamma_0(s)$. Differentiating \eqref{eq:state_X} with respect to $s$ yields \eqref{eq:state_a}, since $a=\partial_s X$ and
\[
\partial_t a=\partial_s\partial_t X=\partial_s\bigl(J\nabla_x \psi(t,X)\bigr)=J\nabla_x^2 \psi(t,X)\,\partial_s X=J\nabla_x^2 \psi(t,X)\,a.
\]
The length formula \eqref{eq:length} follows from the standard definition applied to the parameterized curve $s\mapsto X(t,s)$.
\end{proof}

Hence, the geometry of the transported interface is completely determined by the Hamiltonian flow and its differential along trajectories. The functional $\mathcal{L}(\psi;T)$ is the geometric quantity used below as a reduced surrogate objective for mixing.

\section{Optimal control problem}\label{sec:ocp}

Building on the Hamiltonian interface dynamics established in Section~\ref{sec:Hamiltonian}, in particular the closed system \eqref{eq:state} for the marker pair $(X,a)$ that serves as the ODE constraint of the optimal control problem, we now formulate the optimization problem of maximizing the terminal interface length. To avoid unnecessary technical difficulties, we introduce a regularized cost functional, define the admissible control set, derive the continuous adjoint system, and obtain the reduced gradient formula within the finite-dimensional ansatz.

\subsection{Length-maximization problem}

Fix $\varepsilon>0$ and define the regularized Euclidean norm
\begin{equation}\label{eq:ell}
\ell_\varepsilon(z):=\sqrt{|z|^2+\varepsilon^2},
\qquad
\nabla \ell_\varepsilon(z)=\frac{z}{\sqrt{|z|^2+\varepsilon^2}},
\end{equation}
which is a smooth approximation to $|z|$, differentiable at $z=0$.

We restrict the stream function to a finite-dimensional subspace. Fix $N\in\mathbb N$ and a family of linearly independent basis functions $h_1,\dots,h_N$ satisfying the standing assumption
\begin{equation}\label{eq:basis_assumption}
h_k\in C^\infty(\overline\Omega)\cap H_0^1(\Omega),\qquad k=1,\dots,N,
\end{equation}
and set
\begin{equation}\label{eq:smooth_space}
\mathcal H_N:=\mathrm{span}\{h_1,\dots,h_N\}\subset C^\infty(\overline\Omega)\cap H_0^1(\Omega).
\end{equation}
Since each $h_k\in C^\infty(\overline\Omega)$, the basis constants
\begin{equation}\label{eq:basis_constants}
B^{(j)}:=\max_{1\le k\le N}\|h_k\|_{W^{j,\infty}(\Omega)},\qquad j=0,1,2,3,
\end{equation}
are finite.

Fix a radius $R_u>0$ and consider the associated set of admissible stream function profiles
\begin{equation}\label{eq:Uad}
\mathcal U_{\mathrm{ad}}^{R_u}
:=
\Bigl\{
\psi(t,x)=\sum_{k=1}^N u_k(t)\,h_k(x)\;;\;u\in L^2(0,T;\mathbb R^N),\ \|u\|_{L^2(0,T;\mathbb R^N)}\le R_u
\Bigr\}.
\end{equation}
The dimension of the admissible set $N$ being fixed, for every $j\in\{0,1,2,3\}$,
\begin{equation}\label{eq:pt_bound}
\|\psi(t,\cdot)\|_{W^{j,\infty}(\Omega)}\le\sqrt N\,B^{(j)}\,|u(t)|\qquad\text{for a.e. }t\in(0,T),
\end{equation}
where $|u(t)|$ denotes the Euclidean norm of $u(t)\in\mathbb R^N$. This yields, in particular, bounds for $\psi\in L^2(0,T;W^{3,\infty}(\Omega))$ and $\mathbf v:=\nabla^\perp\psi\in L^2(0,T;W^{2,\infty}(\Omega;\mathbb R^2))$ in terms of the $L^2(0,T;\mathbb R^N)$-norm of $u(t).$

By the linear independence of $\{h_k\}_{k=1}^N$, each $\psi\in\mathcal U_{\mathrm{ad}}^{R_u}$ is determined by a unique coefficient vector $u\in L^2(0,T;\mathbb R^N)$ with $\|u\|_{L^2(0,T;\mathbb R^N)}\le R_u$; we accordingly identify $\psi$ with $u$. The symmetric positive-definite Gram matrix
\begin{equation}\label{eq:gram}
M\in\mathbb R^{N\times N},\qquad M_{k\ell}:=(h_k,h_\ell)_{L^2(\Omega)},\qquad k,\ell=1,\dots,N,
\end{equation}
records the cross-correlations of the basis; a direct expansion yields, for every $u\in\mathbb R^N$ and $\psi=\sum_{k=1}^N u_k h_k$,
\begin{equation}\label{eq:gram_identity}
\|\psi\|_{L^2(\Omega)}^2=u^\top M u.
\end{equation}

For $\lambda>0$, we define the cost functional
\begin{equation}\label{eq:cost}
\mathcal{J}_{\lambda,\varepsilon}(\psi)
:=
-\int_0^1 \ell_\varepsilon\bigl(a(T,s)\bigr)\,ds
+\frac{\lambda}{2}\int_0^T \|\psi(t,\cdot)\|_{L^2(\Omega)}^2\,dt,
\qquad\psi\in\mathcal U_{\mathrm{ad}}^{R_u},
\end{equation}
where $a(T,s)$ and $\psi(t,\cdot)$ satisfies the state system  \eqref{eq:state}.
By \eqref{eq:gram_identity}, the penalty admits the equivalent coefficient representation $$\frac{\lambda}{2}\int_0^T u(t)^\top M u(t)\,dt.$$
The optimal control or design problem we consider is
\begin{equation}\label{eq:ocp}
\min_{\psi\in\mathcal{U}_{\mathrm{ad}}^{R_u}}\mathcal{J}_{\lambda,\varepsilon}(\psi).
\end{equation}
The negative sign in the first term reflects that we \emph{maximize} the final interface length while penalizing the control effort.

The interface $\Gamma_t$ optimized in \eqref{eq:ocp} is not arbitrary: it coincides with the material image of $\Gamma_0$ under the Hamiltonian flow, which in turn coincides with the zero level set of the scalar field used in the full PDE validation of Section~\ref{sec:comparison}. We record this identification as a separate proposition because it ties the Lagrangian object that is optimized to the Eulerian object on which the resulting controls are subsequently evaluated.

\begin{prop}[Transport of the reference interface]\label{prop:zero_contour}
Let $\theta_0\in C^1(\overline\Omega)$ be such that $\Gamma_0:=\{x\in\overline\Omega:\theta_0(x)=0\}$ is a regular level set, and let $\theta\in C^1([0,T]\times\overline\Omega)$ solve the transport equation \eqref{eq:transport} for the velocity field $\mathbf v=\nabla^\perp\psi$ associated with some $\psi\in\mathcal U_{\mathrm{ad}}^{R_u}$. Then, for every $t\in[0,T]$,
\begin{equation*}
\Gamma_t:=\{x\in\overline\Omega:\theta(t,x)=0\}=\Phi_\psi(t;\Gamma_0),
\end{equation*}
where $\Phi_\psi(t;\cdot)$ is the Hamiltonian flow map of $\psi$. In particular, the material interface optimized in \eqref{eq:ocp} is the zero contour of the transported scalar that is used in the full-PDE validation of Section~\ref{sec:comparison}.
\end{prop}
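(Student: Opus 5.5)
The argument is the classical method of characteristics: $\theta$ is constant along trajectories of the flow map, so $\theta(t,\Phi_\psi(t;x))=\theta_0(x)$. The zero set at time $t$ is then the image of the zero set at time $0$ under the bijection $\Phi_\psi(t;\cdot)$ of $\overline\Omega$.

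\textbf{Step 1: the flow map is a homeomorphism of $\overline\Omega$.} For $\psi=\sum_k u_k h_k$ with $u\in L^2(0,T;\mathbb R^N)$, the bound \eqref{eq:pt_bound} with $j=2$ shows that $\mathbf v=\nabla^\perp\psi$ is Lipschitz in $x$ with constant $\sqrt N B^{(2)}|u(t)|$. This constant lies in $L^1(0,T)$ since $L^2(0,T)\subset L^1(0,T)$. By Carath\'eodory's theorem, \eqref{eq:hamiltonian_flow} has a unique absolutely continuous solution for each $x$, and $x\mapsto\Phi_\psi(t;x)$ is Lipschitz by Gr\"onwall's inequality.

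Invariance of $\overline\Omega$ follows from $\psi|_\Gamma=0$. Because of this, $\mathbf v\cdot\mathbf n_\Omega=0$ on $\Gamma$, and $\psi$ is a conserved quantity along trajectories, since $\frac{d}{dt}\psi(t,\Phi)$ picks up $\nabla\psi\cdot J\nabla\psi=0$. To avoid handling $\partial_t\psi$, I would argue instead that the boundary is invariant: a trajectory starting on $\Gamma$ stays on $\Gamma$, by uniqueness applied to the restricted flow along the $C^2$ curve $\Gamma$. Trajectories in $\Omega$ then cannot reach $\Gamma$, again by uniqueness. Solving backward in time from any $y$ gives the inverse map, so $\Phi_\psi(t;\cdot)$ is a bijection of $\overline\Omega$ onto itself for each $t$.

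\textbf{Step 2: conservation along characteristics.} Set $g(t):=\theta(t,\Phi_\psi(t;x))$. Since $\theta\in C^1$ and $\Phi$ is absolutely continuous in $t$, the chain rule holds for a.e. $t$:
\[
g'(t)=\partial_t\theta+\mathbf v\cdot\nabla\theta=0 .
\]
Here we read the transport equation \eqref{eq:transport} as holding for a.e. $t$, because $\mathbf v$ is only $L^2$ in time. Hence $g(t)=g(0)=\theta_0(x)$ for all $t$.

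\textbf{Step 3: the two inclusions.} If $x\in\Gamma_0$, then $\theta(t,\Phi_\psi(t;x))=0$, which gives $\Phi_\psi(t;\Gamma_0)\subset\Gamma_t$. Conversely, if $\theta(t,y)=0$, write $y=\Phi_\psi(t;x)$ using surjectivity from Step 1. Then $\theta_0(x)=0$, which gives the reverse inclusion. The regularity of the level set $\Gamma_0$ is not needed for the set equality. It only ensures that $\Gamma_0$ is a $C^1$ curve, which $\gamma_0$ parametrizes, and that $\Gamma_t$ remains regular because $D\Phi_\psi$ is invertible. This identifies $\Gamma_t$ with the curve $X(t,\cdot)$ of Proposition~\ref{prop:interface_ode}.

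\textbf{Main obstacle.} The hard part is Step 1: making rigorous that the flow is a bijection of $\overline\Omega$ when the velocity is merely $L^2$ in time, and in particular that trajectories do not leave $\overline\Omega$. I would first try the distance function $d(x)=\mathrm{dist}(x,\Gamma)$, which is $C^2$ near $\Gamma$ because $\Gamma$ is $C^2$. On $\Gamma$ we have $\nabla d\cdot\mathbf v=0$, and $\mathbf v$ is Lipschitz, so $|\nabla d\cdot\mathbf v|\le C|u(t)|\,d$ near $\Gamma$. Gr\"onwall's inequality applied to $d(\Phi_\psi(t;x))$ then shows that $d$ stays zero if it starts at zero and stays positive if it starts positive. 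This gives the invariance needed for Step 1.
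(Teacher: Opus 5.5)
Your proposal is correct and follows the same route as the paper. In both, the method of characteristics gives $\theta(t,\Phi_\psi(t;x))=\theta_0(x)$, and invertibility of $\Phi_\psi(t;\cdot)$ on $\overline\Omega$ then turns this into the set identity $\Gamma_t=\Phi_\psi(t;\Gamma_0)$. The paper only asserts that invertibility ``by the smoothness and incompressibility of $\mathbf v$'', whereas you justify it through uniqueness with an $L^1$-in-time Lipschitz constant, forward/backward invariance of $\overline\Omega$ (your Gr\"onwall bound on $d(\Phi_\psi)$ is a clean substitute for the paper's appeal to Nagumo in Theorem~\ref{prop:H3}), and the a.e.-in-time chain rule.
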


\begin{proof}
By the method of characteristics, $\theta(t,x)=\theta_0(\Phi_\psi(t;x)^{-1})$ for $x\in\overline\Omega$, where $\Phi_\psi(t;\cdot)^{-1}$ exists by the smoothness and incompressibility of $\mathbf v$ and is the inverse of $\Phi_\psi(t;\cdot)$. The condition $\theta(t,x)=0$ is therefore equivalent to $\Phi_\psi(t;x)^{-1}\in\Gamma_0$, that is, $x\in\Phi_\psi(t;\Gamma_0)$.
\end{proof}

\begin{remark}[Interface length as a critical geometric mix-norm]\label{rem:critical_mixnorm}
The use of the interface length \eqref{eq:cost} as a reduced objective admits a precise distributional interpretation as a geometric analogue of a negative Sobolev mix-norm, which complements the area-balance heuristic \eqref{eq:geom_scale}. Let $\Gamma\subset\Omega$ be a smooth material curve and denote by $\mu_\Gamma:=\mathcal H^1\!\lfloor_\Gamma$ the one-dimensional Hausdorff measure restricted to $\Gamma$. If $E\subset\Omega$ is one of the two phases bounded by $\Gamma=\partial E$, then the distributional gradient of the characteristic function $\chi_E$ is the vector-valued Radon measure $D\chi_E=-n_\Gamma\,\mathcal H^1\!\lfloor_\Gamma$, and consequently
\[
|D\chi_E|(\Omega)=\mathcal H^1(\Gamma)=\mathcal L(\Gamma);
\] the interface length is exactly the total variation of the singular gradient of a two-phase scalar.

This observation can be related to negative Sobolev norms. The line measure $\mu_\Gamma$ belongs to $H^{-s}(\Omega)$ for every $s>1/2$, with $s=1/2$ the critical exponent for codimension-one measures in two dimensions \cite[Ch.~12]{mattila1995geometry}. If $\rho_\delta$ is a standard mollifier and $\mu_\Gamma^\delta:=\rho_\delta\ast\mu_\Gamma$, then at the critical level a formal Plancherel computation gives the leading-order asymptotics: for a flat segment of length $L$ along the $x_1$-axis, $\widehat{\mu_\Gamma}(\xi_1,\xi_2)=2\sin(\xi_1L/2)/\xi_1$ is constant in $\xi_2$, and the angular average $$\int_{S^1}|\widehat{\mu_\Gamma}(r\hat\xi)|^2\,d\hat\xi\sim 2\pi L/r$$ for $r=|\xi|\to\infty$ produces, after integration against $|\xi|^{-1}|\widehat{\rho_\delta}|^2$ in polar coordinates, $\int_0^{1/\delta}(2\pi L/r)\,dr=2\pi L\log(1/\delta)+O(1)$; the same scaling holds for general smooth curves by a partition of unity along arclength, with curvature and nonlocal interactions contributing only to the $O_\Gamma(1)$ remainder. We thus obtain
\[
\|\mu_\Gamma^\delta\|_{\dot{H}^{-1/2}(\Omega)}^2=C\,\mathcal L(\Gamma)\,\log(1/\delta)+O_\Gamma(1)\qquad\text{as }\delta\to 0,
\]
with a constant $C>0$ depending only on the Fourier normalization and on the choice of mollifier. Hence the renormalized critical Sobolev size of the regularized interface measure recovers the interface length:
\[
\mathcal L(\Gamma)\sim\frac{1}{C\log(1/\delta)}\,\|\mu_\Gamma^\delta\|_{\dot{H}^{-1/2}(\Omega)}^2\qquad\text{as }\delta\to 0.
\]
In this sense, maximizing $\mathcal L(\psi;T)$ may be viewed as increasing an inverse geometric mixing scale, or equivalently the renormalized critical Sobolev size of the interface measure carried by the optimized material interface. We emphasize that this interpretation does \emph{not} imply equivalence with the Eulerian $\dot{H}^{-1}$ mix-norm \eqref{eq:mixnorm} of the transported scalar; rather, it provides a distributional rationale for using interface length as a Lagrangian geometric surrogate for mixing in two-phase configurations, complementing the heuristic of \eqref{eq:geom_scale}.
\end{remark}

We are now in the position to state the main well-posedness and existence result for \eqref{eq:ocp}.

\begin{thm}[Existence of a minimizer]\label{prop:H3}
Let $\mathcal H_N$, $B^{(j)}$, $\mathcal U_{\mathrm{ad}}^{R_u}$, and $\mathcal J_{\lambda,\varepsilon}$ be given by \eqref{eq:smooth_space}, \eqref{eq:basis_constants}, \eqref{eq:Uad}, and \eqref{eq:cost}, respectively. Assume that the basis $h_k$ satisfies \eqref{eq:basis_assumption} and that $\gamma_0\in W^{2,\infty}([0,1];\overline\Omega)$. Then the following properties hold.
\begin{enumerate}
\item[\textup{(i)}]\emph{(State regularity and well-posedness.)} For every $u\in L^2(0,T;\mathbb R^N)$ with associated stream function $\psi=\sum_{k=1}^N u_k h_k$,
\begin{equation}\label{eq:v_reg}
\psi\in L^2(0,T;W^{3,\infty}(\overline\Omega)),\qquad
\mathbf v=\nabla^\perp\psi\in L^2(0,T;W^{2,\infty}(\Omega;\mathbb R^2));
\end{equation}
the flow $\Phi_\psi$ maps $\overline\Omega$ into itself, and the state system  \eqref{eq:state} admits a unique solution $(X,a)\in C([0,T]\times[0,1];\overline\Omega\times\mathbb R^2)$. Consequently, the cost functional $\mathcal{J}_{\lambda,\varepsilon}$ is well defined and depends continuously on $u$, i.e., on  $\psi$.
\item[\textup{(ii)}]\emph{(Existence of a minimizer.)} The minimization problem \eqref{eq:ocp} admits at least one minimizer $\psi^*\in\mathcal U_{\mathrm{ad}}^{R_u}$.
\end{enumerate}
\end{thm}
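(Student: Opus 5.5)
For part (i), the regularity \eqref{eq:v_reg} follows directly from the pointwise bound \eqref{eq:pt_bound}. Squaring and integrating in time gives $\|\psi\|_{L^2(0,T;W^{3,\infty})}\le\sqrt N B^{(3)}\|u\|_{L^2}$, and similarly for $\mathbf v=\nabla^\perp\psi$ with $B^{(1)},B^{(2)},B^{(3)}$. Since $u\in L^2(0,T)\subset L^1(0,T)$, the field $\mathbf v$ is Carath\'eodory in $t$ and globally Lipschitz in $x$ with constant $\sqrt N B^{(2)}|u(t)|\in L^1(0,T)$. I would first extend $\mathbf v(t,\cdot)$ to a Lipschitz field on $\mathbb R^2$; the smoothness of $h_k$ up to $\Gamma$ and the $C^2$ boundary allow this. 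The Carath\'eodory--Picard theorem then gives a unique absolutely continuous flow $\Phi_\psi$, and Gronwall shows it depends continuously on $x$. Invariance of $\overline\Omega$ comes from $\psi|_\Gamma=0$, which makes $\mathbf v$ tangent to $\Gamma$. I would argue this either by noting that $\psi(t,\cdot)$ is conserved along trajectories only for autonomous flows, which does not apply here, or more robustly via a distance-function argument. Concretely, let $d$ be the signed distance to $\Gamma$, which is $C^2$ near $\Gamma$. Then $\frac{d}{dt}d(\Phi)=\nabla d\cdot\mathbf v$, and since $\mathbf v\cdot\mathbf n_\Omega=0$ on $\Gamma$ and $\mathbf v$ is Lipschitz, $|\nabla d\cdot \mathbf v(t,x)|\le C|u(t)|\,|d(x)|$ near $\Gamma$. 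Gronwall then shows that $d(\Phi)=0$ is never crossed. With $X(t,s)=\Phi_\psi(t;\gamma_0(s))$ defined, \eqref{eq:state_a} is a linear ODE with coefficient matrix $J\nabla^2\psi(t,X)$ bounded by $\sqrt N B^{(2)}|u(t)|\in L^1$. It therefore has a unique solution with $|a(t,s)|\le|\gamma_0'(s)|\exp\bigl(\sqrt N B^{(2)}\|u\|_{L^1}\bigr)$. Continuity in $(t,s)$ follows from continuity of $\gamma_0,\gamma_0'$ (since $\gamma_0\in W^{2,\infty}$, $\gamma_0'$ is Lipschitz) together with continuous dependence of the ODE solution on data, using the Lipschitz bound on $\nabla^2\psi$ given by $B^{(3)}$. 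The identification $a=\partial_sX$ is Proposition~\ref{prop:interface_ode}.

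For the continuity of $\mathcal J_{\lambda,\varepsilon}$, I would show that $u_n\to u$ in $L^2$ implies $a_n(T,\cdot)\to a(T,\cdot)$ uniformly. Subtracting the $X$-equations and using the $x$-Lipschitz bound gives $|X_n-X|(t)\le \sqrt N B^{(1)}\|u_n-u\|_{L^1}+\int_0^t\sqrt N B^{(2)}|u_n||X_n-X|$. For the $a$-equations, the difference of coefficients is controlled by $B^{(3)}|u_n|\,|X_n-X|+B^{(2)}|u_n-u|$. Gronwall, together with the uniform $L^1$ bound on $u_n$, closes both estimates. Since $\ell_\varepsilon$ is $1$-Lipschitz and the penalty $\frac\lambda2\int u^\top Mu$ is continuous on $L^2$, $\mathcal J_{\lambda,\varepsilon}$ is continuous.

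For part (ii), strong continuity is not enough because the ball is not compact, so I would use the direct method with weak convergence. $\mathcal J$ is bounded below on $\mathcal U_{\mathrm{ad}}^{R_u}$ by the a priori bound on $|a|$. I would take a minimizing sequence $u_n$, which is bounded by $R_u$, and extract a subsequence with $u_n\rightharpoonup u^*$ weakly in $L^2(0,T;\mathbb R^N)$; the closed ball is weakly closed, so $u^*$ is admissible. The key step, and the main obstacle, is showing that the terminal state converges strongly despite only weak convergence of the controls. The dynamics are linear in the control, $\dot X=\sum_k u_k(t)J\nabla h_k(X)$ and similarly for $a$. The trajectories $(X_n,a_n)$ are equibounded and equicontinuous in $t$, since $|\dot X_n|\le C|u_n(t)|$ and Cauchy--Schwarz gives $|X_n(t)-X_n(t')|\le C R_u|t-t'|^{1/2}$. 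They are also equicontinuous in $s$ by the Gronwall estimates. Arzel\`a--Ascoli then gives uniform convergence of a subsequence to some $(\bar X,\bar a)$. To pass to the limit in the integral form, I would split $\int_0^t u_{n,k}\,J\nabla h_k(X_n)=\int_0^t u_{n,k}\,J\nabla h_k(\bar X)+\int_0^t u_{n,k}\bigl(J\nabla h_k(X_n)-J\nabla h_k(\bar X)\bigr)$. The first term converges by weak convergence against the fixed $L^2$ function $J\nabla h_k(\bar X(\cdot,s))$. The second is bounded by $R_u\sqrt T B^{(2)}\|X_n-\bar X\|_\infty\to0$. The $a$-equation is handled identically, using $B^{(3)}$ and the product structure $\nabla^2h_k(X_n)a_n$. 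Uniqueness from (i) then gives $(\bar X,\bar a)=(X^*,a^*)$, the state of $u^*$. Finally, the length term converges, and the penalty is convex and continuous, hence weakly lower semicontinuous because $M$ is positive definite. Therefore $\mathcal J(u^*)\le\liminf\mathcal J(u_n)$, and $u^*$ is a minimizer.
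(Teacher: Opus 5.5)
Your proposal is correct. Part (ii) follows the paper's proof essentially step by step: the a priori bound on $|a|$, weak compactness of the ball, equicontinuity in $t$ via Cauchy--Schwarz and in $s$ via Gr\"onwall, Arzel\`a--Ascoli, passage to the limit by pairing weakly convergent $u_{n,k}$ against uniformly convergent state-dependent coefficients, identification of the limit by uniqueness, and weak lower semicontinuity of the penalty.

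The differences are in part (i).
\begin{enumerate}
\item \emph{Invariance of $\overline\Omega$.} The paper cites Nagumo's viability theorem, using only the tangency identity $\frac{d}{dt}d(X(t))|_{t=t_0}=0$ at boundary points. You instead prove the quantitative bound $|\nabla d\cdot\mathbf v(t,x)|\le C|u(t)|\,|d(x)|$ in a tubular neighbourhood and close it with Gr\"onwall. This is more elementary and better suited to merely $L^2$-in-time controls, for which a derivative at a fixed time $t_0$ need not exist. It also shows that $\Gamma$ is never crossed in either direction.
\item \emph{Solving the state system.} You use its triangular structure: first the flow $X$, then $a$ from a linear ODE. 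The paper instead runs a joint Carath\'eodory argument with an $|a|$-dependent local Lipschitz constant.
\item \emph{Continuity in $u$.} This is the most substantive difference. Your Gr\"onwall stability estimate for $(X_n-X,\,a_n-a)$ actually proves the asserted continuous dependence of $\mathcal J_{\lambda,\varepsilon}$ on $u$, in fact local Lipschitz dependence on $\|u_n-u\|_{L^1}$. The paper's Step~4 of (i) only establishes continuity of $t\mapsto\mathcal L(\psi;t)$. There, continuity in $u$ is left implicit in the weak-to-uniform convergence argument of (ii).
\item \emph{Regularity of $\gamma_0$.} If you obtain $s$-equicontinuity by Gr\"onwall on differences of solutions, you only need a uniform modulus of continuity of $\gamma_0'$. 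The paper differentiates the tangent equation in $s$ and uses $\gamma_0''\in L^\infty$.
\end{enumerate}
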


\begin{proof}
The proof combines classical direct-method ingredients (Carath\'eodory, Gr\"onwall, Arzel\`a--Ascoli, lower semicontinuity) adapted to the finite-dimensional Ha-miltonian ansatz and the bounded set $\mathcal U_{\mathrm{ad}}^{R_u}$.

\noindent\textit{Proof of \textup{(i)}.}
The proof is organized in the following four steps.

\vskip 5pt
\noindent\textit{Step 1. Regularity of the stream function and velocity field.}

By \eqref{eq:pt_bound} with $j=3$, we readily get \eqref{eq:v_reg}.

\vskip 5pt
\noindent\textit{Step 2. Flow invariance of $\overline\Omega$.}

Since $\psi(t,\cdot)\in H_0^1(\Omega)\cap C^\infty(\overline\Omega)$ by \eqref{eq:basis_assumption} and \eqref{eq:smooth_space}, for every $t\in[0,T]$,
\begin{equation*}
\mathbf v(t,\cdot)\cdot\mathbf n_\Omega=\nabla^\perp\psi(t,\cdot)\cdot\mathbf n_\Omega=-\partial_\tau\psi(t,\cdot)=0\qquad\text{on }\Gamma,
\end{equation*}
where $\partial_\tau$ denotes the tangential derivative along $\Gamma$. Let $d:\mathbb R^2\to\mathbb R$ denote the signed distance to $\Gamma$, negative inside $\Omega$. Since $\Gamma$ is of class $C^2$, $d\in C^2$ in a tubular neighbourhood of $\Gamma$ with $\nabla d=\mathbf n_\Omega$ on $\Gamma$. For any absolutely continuous trajectory $X(\cdot)$ of \eqref{eq:state_X} with $X(t_0)\in\Gamma$,
\begin{equation}\label{eq:invariance_id}
\frac{d}{dt}d(X(t))\big|_{t=t_0}=\nabla d(X(t_0))\cdot\mathbf v(t_0,X(t_0))=0.
\end{equation}
By \eqref{eq:invariance_id} and the Lipschitz regularity of $\mathbf v$ from \eqref{eq:v_reg}, Nagumo's viability theorem \cite[Ch.~V, Thm.~4.1]{hartman2002ode} implies that $\overline\Omega$ is positively invariant under the flow $\Phi_\psi$.

\vskip 5pt
\noindent\textit{Step 3. Existence and uniqueness of $(X,a)$.}

Define $F(t,X,a;u):=\bigl(J\nabla_x\psi(t,X),\,J\nabla_x^2\psi(t,X)\,a\bigr)$. By the regularity considerations above, $F$ is measurable in $t$, of class $C^\infty$ in $(X,a)$, and locally Lipschitz in $(X,a)$ with constant
\begin{equation}\label{eq:F_lip}
L_F(t,|a|):=\sqrt N\,B^{(2)}\,|u(t)|+\sqrt N\,B^{(3)}\,|u(t)|\,(1+|a|)\in L^2(0,T)\subset L^1(0,T).
\end{equation}
Fix $s\in[0,1]$; since $\gamma_0\in W^{2,\infty}([0,1];\overline\Omega)$, the initial datum $(\gamma_0(s),\gamma_0'(s))$ is well defined. Carath\'eodory's existence theorem \cite[Ch.~II, Thm.~1.1]{hartman2002ode} produces a local solution, and \eqref{eq:F_lip} combined with Gr\"onwall's inequality yields uniqueness \cite[Ch.~II, Thm.~2.1]{hartman2002ode}.

It remains to exclude finite-time blow-up. Flow invariance (Step~2) confines $X(\cdot,s)$ to the compact set $\overline\Omega$. For $a$, the tangent equation \eqref{eq:state_a} and \eqref{eq:pt_bound} with $j=2$ give
\begin{equation*}
\frac{d}{dt}|a(t,s)|^2\le 2\,\|\nabla_x^2\psi(t,\cdot)\|_{L^\infty(\Omega)}\,|a(t,s)|^2\le 2\sqrt N\,B^{(2)}\,|u(t)|\,|a(t,s)|^2,
\end{equation*}
where Gr\"onwall's inequality yields the $u$-dependent bound
\begin{equation}\label{eq:Ma_u}
|a(t,s)|\le M_a(u):=\|\gamma_0'\|_{L^\infty([0,1])}\,\exp\!\Bigl(\sqrt N\,B^{(2)}\int_0^T|u(\tau)|\,d\tau\Bigr),
\end{equation}
which is finite for every $u\in L^2(0,T;\mathbb R^N)\subset L^1(0,T;\mathbb R^N)$. Hence the solution $(X,a)$ extends uniquely to all of $[0,T]$. Continuous dependence on $s$ follows from Gr\"onwall applied to two solutions sharing $u$, whose initial data differ by $(\|\gamma_0'\|_{L^\infty}+\|\gamma_0''\|_{L^\infty})|s_1-s_2|$; continuity in $t$ is automatic from the integral form of the ODE. Jointly, $(X,a)\in C([0,T]\times[0,1];\overline\Omega\times\mathbb R^2)$.

\vskip 5pt
\noindent\textit{Step 4. Length well-posedness.}

The regularized norm $\ell_\varepsilon$ is globally $1$-Lipschitz on $\mathbb R^2$ because $$|\nabla\ell_\varepsilon(z)|=|z|/\sqrt{|z|^2+\varepsilon^2}\le 1.$$ By Step~3, $a\in C([0,T]\times[0,1];\mathbb R^2)$ is uniformly continuous on the compact rectangle $[0,T]\times[0,1]$. Hence, for every $t,t'\in[0,T]$,
\begin{equation*}
|\mathcal L(\psi;t)-\mathcal L(\psi;t')|\le\int_0^1|\ell_\varepsilon(a(t,s))-\ell_\varepsilon(a(t',s))|\,ds\le\int_0^1|a(t,s)-a(t',s)|\,ds\xrightarrow{t'\to t}0,
\end{equation*}
so $\mathcal L(\psi;\cdot)\in C([0,T])$. This proves the last assertion of (i).

\vskip 5pt
\noindent\textit{Proof of \textup{(ii)}.}
We apply the direct method of the calculus of variations on $\mathcal U_{\mathrm{ad}}^{R_u}$, which by \eqref{eq:Uad} is itself the $L^2(0,T;\mathbb R^N)$-ball of radius $R_u$; the proof is organized in the following four steps.

\vskip 5pt
\noindent\textit{Step 1. A uniform \emph{a priori} bound on the tangent.}

For every $u\in L^2(0,T;\mathbb R^N)$ with $\|u\|_{L^2}\le R_u$, the Cauchy--Schwarz inequality yields $\int_0^T|u(\tau)|\,d\tau\le\sqrt T\,R_u$, so that the $u$-dependent bound \eqref{eq:Ma_u} of Step~3 of (i) specializes to
\begin{equation}\label{eq:Ma}
|a(t,s)|\le M_a:=\|\gamma_0'\|_{L^\infty([0,1])}\,\exp\,\bigl(\sqrt N\,B^{(2)}\,\sqrt T\,R_u\bigr)
\end{equation}
uniformly on $[0,T]\times[0,1]$ for every $\psi\in\mathcal U_{\mathrm{ad}}^{R_u}$. In particular,
\begin{equation*}
\mathcal J_{\lambda,\varepsilon}(\psi)\ge-\int_0^1\sqrt{|a(T,s)|^2+\varepsilon^2}\,ds\ge-\sqrt{M_a^2+\varepsilon^2},
\end{equation*}
so $\mathcal J_{\lambda,\varepsilon}$ is bounded below on $\mathcal U_{\mathrm{ad}}^{R_u}$.

\vskip 5pt
\noindent\textit{Step 2. Weak compactness of a minimizing sequence.}

Let $(\psi_m)\subset\mathcal U_{\mathrm{ad}}^{R_u}$ be a minimizing sequence with coefficients $u_m\in L^2(0,T;\mathbb R^N)$ satisfying $\|u_m\|_{L^2}\le R_u$. The closed ball of radius $R_u$ in Hilbert space $L^2(0,T;\mathbb R^N)$ is convex, strongly closed, and bounded, hence weakly compact; therefore, up to extraction, $u_m\rightharpoonup u^*$ weakly with $\|u^*\|_{L^2}\le R_u$. Setting $\psi^*(t,x):=\sum_{k=1}^N u^*_k(t)\,h_k(x)$, we have $\psi^*\in\mathcal U_{\mathrm{ad}}^{R_u}$.

\vskip 5pt
\noindent\textit{Step 3. Uniform convergence of the state.}

Let $(X_m,a_m)$ and $(X^*,a^*)$ be the states associated with $\psi_m$ and $\psi^*$, respectively. By Step~1, $|a_m(t,s)|\le M_a$ uniformly in $m,t,s$. We establish joint $(t,s)$-equicontinuity of $(X_m,a_m)$.

\emph{Equicontinuity in $t$.} Integrating \eqref{eq:state_X} and invoking the pointwise bound \eqref{eq:pt_bound} with $j=1$,
\begin{align*}
|X_m(t,s)-X_m(t',s)|
\le\int_{t'}^t\sqrt N\,B^{(1)}\,|u_m(\tau)|\,d\tau
\le\sqrt N\,B^{(1)}\,R_u\,\sqrt{|t-t'|}
\end{align*}
by Cauchy--Schwarz. Analogously, by \eqref{eq:state_a}, $|a_m|\le M_a$, and the pointwise bound \eqref{eq:pt_bound} with $j=2$,
\begin{align*}
|a_m(t,s)-a_m(t',s)|\le\sqrt N\,B^{(2)}\,M_a\,R_u\,\sqrt{|t-t'|}.
\end{align*}

\emph{Equicontinuity in $s$.} Since $\partial_s X_m(t,s)=a_m(t,s)$, we have $$|X_m(t,s_1)-X_m(t,s_2)| \le M_a|s_1-s_2|.$$ Differentiating the tangent ODE $\partial_t a_m=J\nabla_x^2\psi_m(t,X_m)\,a_m$ in $s$ yields
\begin{align*}
\partial_t(\partial_s a_m)
=J\nabla_x^2\psi_m(t,X_m)\,\partial_s a_m
+J\nabla_x^3\psi_m(t,X_m)[\,a_m,a_m\,],
\end{align*}
with initial datum $\partial_s a_m(0,s)=\gamma_0''(s)\in L^\infty([0,1])$. By the pointwise bound \eqref{eq:pt_bound} with $j=3$, the forcing is bounded by $\sqrt N\,B^{(3)}\,|u_m(t)|\,M_a^2$, while the coefficient of $\partial_s a_m$ has $L^\infty_x$-norm at most $\sqrt N\,B^{(2)}\,|u_m(t)|$. Gr\"onwall's inequality then produces a uniform bound
\begin{equation}\label{eq:Map}
|\partial_s a_m(t,s)|\le M_a':=\bigl(\|\gamma_0''\|_{L^\infty}+\sqrt N\,B^{(3)}\,M_a^2\,\sqrt T\,R_u\bigr)\,\exp\!\bigl(\sqrt N\,B^{(2)}\,\sqrt T\,R_u\bigr),
\end{equation}
and hence $|a_m(t,s_1)-a_m(t,s_2)|\le M_a'|s_1-s_2|$.

By Arzel\`a--Ascoli, a subsequence (not relabelled) satisfies $X_m\to\widetilde X$ and $a_m\to\widetilde a$ uniformly on $[0,T]\times[0,1]$. The smoothness of $h_k$ together with uniform convergence $X_m\to\widetilde X$ implies $J\nabla h_k(X_m)\to J\nabla h_k(\widetilde X)$ strongly in $L^2((0,T)\times(0,1);\mathbb R^2)$, while $u_{k,m}\rightharpoonup u^*_k$ weakly in $L^2(0,T)$. The product of a strongly $L^2$-convergent sequence and a weakly $L^2$-convergent sequence converges weakly in $L^1$, which suffices to pass the integral form of \eqref{eq:state_X}--\eqref{eq:state_a} to the limit; the quadratic term $J\nabla_x^2\psi_m(t,X_m)\,a_m$ is treated analogously. Uniqueness from (i) identifies $(\widetilde X,\widetilde a)=(X^*,a^*)$, so that $a_m(T,\cdot)\to a^*(T,\cdot)$ uniformly on $[0,1]$.

\vskip 5pt
\noindent\textit{Step 4. Lower semicontinuity and passage to the limit.}

By Step~3 and continuity of $\ell_\varepsilon$,
\begin{align*}
\int_0^1\ell_\varepsilon(a_m(T,s))\,ds\longrightarrow\int_0^1\ell_\varepsilon(a^*(T,s))\,ds.
\end{align*}
The penalty is convex and continuous on $L^2(0,T;\mathbb R^N)$: indeed, $u\mapsto\int_0^T u(t)^\top M u(t)\,dt$ is a continuous quadratic form, positive by the positive-definiteness of $M$ in \eqref{eq:gram}, hence weakly lower semicontinuous. Combining,
\begin{align*}
\mathcal J_{\lambda,\varepsilon}(\psi^*)\le\liminf_m\mathcal J_{\lambda,\varepsilon}(\psi_m)=\inf_{\mathcal U_{\mathrm{ad}}^{R_u}}\mathcal J_{\lambda,\varepsilon},
\end{align*}
so $\psi^*$ is a minimizer of \eqref{eq:ocp}.

\vskip 5pt
In summary, the proof of Theorem~\ref{prop:H3} is completed.
\end{proof}

Next we give three remarks on Theorem~\ref{prop:H3}: the first explains the regularity required of the initial interface~$\gamma_0$; the second explains why the ball constraint $\|u\|_{L^2}\le R_u$ built into $\mathcal U_{\mathrm{ad}}^{R_u}$ in \eqref{eq:Uad} cannot be dropped without losing existence; and the third justifies the $L^2(\Omega)$-penalty on~$\psi$.

\begin{remark}[Regularity of initial interface]\label{rem:gamma0_reg}
The hypothesis $\gamma_0\in W^{2,\infty}([0,1];\overline\Omega)$ is sharp for the argument of Theorem~\ref{prop:H3} and is precisely what matches the $W^{j,\infty}$-scale of the pointwise bound \eqref{eq:pt_bound}. Two derivatives enter the proof at distinct places:
\begin{enumerate}
\item \emph{Length measurement and a priori bounds on $a$.} The interface length \eqref{eq:length} and the bound \eqref{eq:Ma_u} both feature $\gamma_0'$ through the initial datum $a(0,s)=\gamma_0'(s)$; the hypothesis $\gamma_0'\in L^\infty([0,1])$ ensures that $\mathcal L(\psi;0)<\infty$ and that the Gr\"onwall-type estimate of Step~3 of~(i) produces the finite constant 
	\begin{align*}
		M_a(u)=\|\gamma_0'\|_{L^\infty}\exp(\sqrt N\,B^{(2)}\int_0^T|u|).
	\end{align*}
	Without $\gamma_0'\in L^\infty$, the objective $-\mathcal L(\psi;T)$ itself would fail to be defined on the admissible class.
\item \emph{Compactness of minimizing sequences.} The proof of existence in~(ii) rests on the uniform bound \eqref{eq:Map} on $\partial_s a_m$, whose initial datum is $\partial_s a_m(0,s)=\gamma_0''(s)$. The hypothesis $\gamma_0''\in L^\infty([0,1])$ makes this Gr\"onwall bound finite and hence furnishes the $s$-equicontinuity of $(X_m,a_m)$ needed to apply Arzel\`a--Ascoli in Step~2 of~(ii).
\end{enumerate}
Weaker regularity $\gamma_0\in W^{2,p}$ for some finite $p$ would preserve existence at the cost of propagating $L^p$-in-$s$ bookkeeping through the proof; the $W^{2,\infty}$-choice is the minimal one under which the pointwise bounds \eqref{eq:pt_bound} are inherited by $(X,a,\partial_s a)$ uniformly in $s$, keeping the analysis within a single Sobolev scale.
\end{remark}

\begin{remark}[On the role of control constraints]\label{rem:control_constraints}
We formulate the optimization problem under the constraint $\psi\in\mathcal U_{\mathrm{ad}}^{R_u}$ since, in the absence of such restrictions, the coercivity of the functional $\mathcal J_{\lambda,\varepsilon}$ is far from obvious. Indeed, the cost combines two competing effects: the maximization of the interface length and the penalization of the control effort. Because the Hamiltonian vector field advecting the interface depends linearly on the stream function, and hence on the control $u$, the Gr\"onwall-type estimate \eqref{eq:Ma_u} anticipates a possible exponential growth of the (negative) interface-length term in the norm of the control, which can dominate the positive quadratic control penalty. The dimension $N$ is fixed throughout this paper and plays no role in this competition; the delicacy concerns the dependence of the functional on the control parameters themselves, the infinite-dimensional design problem $N\to\infty$ lying outside the scope considered here. As a consequence, without suitable constraints, one cannot expect bounded minimizing sequences, and the existence of minimizers becomes unclear.

To make this competition precise at the level of the interface length itself, note that $\ell_\varepsilon(a)\le|a|+\varepsilon$ gives, after integration of \eqref{eq:Ma_u} over $s\in[0,1]$ and applying the Cauchy--Schwarz inequality $\int_0^T|u(\tau)|\,d\tau\le\sqrt T\,\|u\|_{L^2(0,T;\mathbb R^N)}$,
\begin{equation}\label{eq:length_u_bound}
\mathcal L(\psi;t)\le\|\gamma_0'\|_{L^\infty([0,1])}\,\exp\,\bigl(\sqrt{NT}\,B^{(2)}\,\|u\|_{L^2(0,T;\mathbb R^N)}\bigr)+\varepsilon,
\end{equation}
which grows exponentially in $\|u\|_{L^2}$, whereas the penalty in \eqref{eq:cost} grows only quadratically in $\|u\|_{L^2}$ (the Gram matrix $M$ of \eqref{eq:gram} has eigenvalues bounded above and below by positive constants). Consequently
\[
\mathcal J_{\lambda,\varepsilon}(\psi)\ge-\mathcal L(\psi;T)+\frac{\lambda}{2}\lambda_{\min}(M)\,\|u\|_{L^2}^2
\]
cannot be made to tend to $+\infty$ as $\|u\|_{L^2}\to\infty$: the functional $\mathcal J_{\lambda,\varepsilon}$ is \emph{not} globally coercive on the unbounded set $L^2(0,T;\mathbb R^N)$. Existence of a minimizer without any size constraint on $u$ is therefore an open problem, and the bounded-ball formulation \eqref{eq:Uad} provides a well-posed surrogate.

In Section~\ref{sec:numerics_exp}, with $R_u=10$, the terminal ratios $\|u^*\|_{L^2}/R_u$ across the five experiments are $\{0.16,\,0.15,\,1.00,\,0.70,\,0.15\}$: the ball is inactive in four cases, while the Doswell constant-initial-guess case saturates the constraint and exercises the projection step of Algorithm~\ref{alg:optimize}.
\end{remark}

\begin{remark}[Choice of the $L^2$ penalty]\label{rem:penalty_choice}
On $\mathcal H_N$, all norms are equivalent and the pointwise bound \eqref{eq:pt_bound} ensures that an $L^2$-in-time control on $u$ also controls $\psi$ in any required Sobolev norm. The $L^2(\Omega)$ penalty on $\psi$ is therefore the canonical choice: by \eqref{eq:gram_identity} it is equivalent to the coefficient-space penalty $\frac{\lambda}{2}\int_0^T u(t)^\top M u(t)\,dt$, and positive-definiteness of $M$ guarantees coercivity in $|u|$ on any bounded ball.
\end{remark}

\subsection{Adjoint system and reduced gradient}\label{sec:adjoint}

Having established well-posedness and existence in Theorem~\ref{prop:H3}, we now derive the first-order optimality system for \eqref{eq:ocp}. To dualize the state equations \eqref{eq:state}, we introduce a pair of adjoint variables $(p,q)$ and first verify that they are uniquely determined by a backward linear ODE.

\begin{lem}[Adjoint system and well-posedness]\label{lem:adjoint_wp}
Under the hypotheses of Theorem~\ref{prop:H3}, the backward linear system
\begin{subequations}\label{eq:adjoint}
\begin{align}
-\partial_t p(t,s)
&=J\nabla_x^2\psi(t,X)^\top p(t,s)+\bigl(J\nabla_x^3\psi(t,X)[\,\cdot\,,a(t,s)]\bigr)^{\!\top}\!q(t,s),\label{eq:adjoint_p}\\
-\partial_t q(t,s)
&=J\nabla_x^2\psi(t,X)^\top q(t,s),\label{eq:adjoint_q}\\
p(T,s)&=0,\qquad
q(T,s)=-\nabla\ell_\varepsilon(a(T,s))=-\frac{a(T,s)}{\sqrt{|a(T,s)|^2+\varepsilon^2}},\label{eq:adjoint_tc}
\end{align}
\end{subequations}
admits a unique solution $(p,q)\in C([0,T]\times[0,1];\mathbb R^2\times\mathbb R^2)$.
\end{lem}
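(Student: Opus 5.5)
The plan is to treat \eqref{eq:adjoint} for each fixed $s\in[0,1]$ as a linear Carath\'eodory system in the unknown $(p,q)(\cdot,s)\in\mathbb R^4$. It runs backward from $t=T$, and its coefficients are built from the forward state $(X,a)$ supplied by Theorem~\ref{prop:H3}(i). After the time reversal $\tau=T-t$ it takes the form
\[
\partial_\tau (p,q) = A(T-\tau,s)\,(p,q),
\]
where $A$ is a block upper-triangular matrix. Its diagonal blocks are $(J\nabla_x^2\psi(t,X))^\top$, and its off-diagonal block is the transpose of the bilinear map $b\mapsto J\nabla_x^3\psi(t,X)[b,a]$. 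By \eqref{eq:pt_bound} with $j=2,3$ and the bound $|a|\le M_a(u)$ from \eqref{eq:Ma_u}, we get
\[
|A(t,s)|\le \sqrt N\,\bigl(B^{(2)}+B^{(3)}M_a(u)\bigr)\,|u(t)|=:m(t)\in L^2(0,T)\subset L^1(0,T).
\]
Measurability in $t$ follows because $A$ is a smooth function of $X(t,s)$ and $a(t,s)$, which are continuous, multiplied by the measurable coefficients $u_k(t)$.

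For existence and uniqueness, a linear system with an $L^1$-integrable coefficient bound is globally Lipschitz, so Carath\'eodory's theorem \cite[Ch.~II]{hartman2002ode} gives a unique absolutely continuous solution on all of $[0,T]$, with no blow-up. The terminal datum is well defined and bounded: $|q(T,s)|\le 1$ and $p(T,s)=0$. I would solve the $q$-equation first, since it is decoupled. Gr\"onwall then gives
\[
|q(t,s)|\le \exp\Bigl(\sqrt N B^{(2)}\int_0^T|u|\Bigr).
\]
Substituting this into the $p$-equation, which is affine with a bounded forcing term, gives the analogous bound on $p$. All of these bounds are uniform in $s$.

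The main obstacle is joint continuity in $(t,s)$, because $s$ enters only through the coefficients and the terminal datum. Continuity in $t$ for fixed $s$ is automatic from the integral form of the equation, with a modulus $\int_{t'}^{t} m\,(|p|+|q|)\,d\tau$ that is uniform in $s$. For continuity in $s$, I would take two parameters $s_1,s_2$ and subtract the two integral equations. The difference satisfies a linear equation with coefficient $A(\cdot,s_1)$, plus a forcing term $\bigl(A(\cdot,s_1)-A(\cdot,s_2)\bigr)(p,q)(\cdot,s_2)$. Since the $h_k$ are smooth, $\nabla^2 h_k$ and $\nabla^3 h_k$ are Lipschitz on $\overline\Omega$, so
\[
|A(t,s_1)-A(t,s_2)|\le C\,|u(t)|\,\bigl(|X(t,s_1)-X(t,s_2)|+|a(t,s_1)-a(t,s_2)|\bigr),
\]
and this tends to $0$ uniformly in $t$ because $(X,a)$ is uniformly continuous on the compact rectangle. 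The terminal difference $|\nabla\ell_\varepsilon(a(T,s_1))-\nabla\ell_\varepsilon(a(T,s_2))|$ tends to $0$ because $\nabla\ell_\varepsilon$ is Lipschitz, with constant $\le 1/\varepsilon$. Gr\"onwall with weight $m$ then yields continuity in $s$ uniformly in $t$. Combining this with the uniform-in-$s$ modulus in $t$ gives $(p,q)\in C([0,T]\times[0,1];\mathbb R^2\times\mathbb R^2)$.

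Uniqueness in the continuous class follows from pointwise uniqueness for each $s$.
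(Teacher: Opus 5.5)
Your proposal is correct and follows the paper's route. For each fixed $s$ you treat the system as a linear Carath\'eodory ODE whose coefficients have $L^1$-in-time bounds coming from \eqref{eq:pt_bound} and \eqref{eq:Ma_u}; you solve the decoupled $q$-equation first and then the $p$-equation with $q$ as forcing. Your explicit Gr\"onwall argument for joint continuity in $(t,s)$, which uses the uniform continuity of $(X,a)$ and the $1/\varepsilon$-Lipschitz bound on $\nabla\ell_\varepsilon$, usefully fills in a step that the paper only asserts by citing Hartman.
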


\begin{proof}
For a.e.\ $t\in(0,T)$ and every $s\in[0,1]$, the pointwise bound \eqref{eq:pt_bound} (with $j=2,3$) together with $|a(t,s)|\le M_a$ from \eqref{eq:Ma} gives
\begin{align*}
\bigl|J\nabla_x^2\psi(t,X(t,s))\bigr|
&\le\|\nabla_x^2\psi(t,\cdot)\|_{L^\infty(\Omega)}\le\sqrt N\,B^{(2)}|u(t)|,\\
\bigl|J\nabla_x^3\psi(t,X(t,s))[\,\cdot\,,a(t,s)]\bigr|
&\le\|\nabla_x^3\psi(t,\cdot)\|_{L^\infty(\Omega)}\,|a(t,s)|
\le\sqrt N\,B^{(3)}\,M_a\,|u(t)|,
\end{align*}
so each coefficient matrix in \eqref{eq:adjoint_p}--\eqref{eq:adjoint_q} is bounded, uniformly in $s$, by a function of $t$ that lies in $L^2(0,T)\subset L^1(0,T)$ (since $u\in L^2(0,T;\mathbb R^N)$). The terminal datum \eqref{eq:adjoint_tc} is bounded, since $|\nabla\ell_\varepsilon|\le 1$. Equation~\eqref{eq:adjoint_q} is therefore a linear Carath\'eodory ODE for $q$ in $t$, parameterized by $s$, to which \cite[Ch.~II, Thms.~1.1--2.1]{hartman2002ode} applies and yields a unique $q\in C([0,T]\times[0,1];\mathbb R^2)$; inserting $q$ as a forcing into \eqref{eq:adjoint_p} and invoking the same result gives a unique $p\in C([0,T]\times[0,1];\mathbb R^2)$.
\end{proof}

The terminal condition \eqref{eq:adjoint_tc} reflects that the cost depends on $X(T,\cdot)$ only through $a(T,\cdot)$ via the regularized length. With $(p,q)$ at hand, we can now identify the Fr\'echet derivative of the reduced cost.

\begin{prop}[Reduced gradient via the adjoint system]\label{prop:adjoint}
Under the hypotheses of Theorem~\ref{prop:H3} and set $j_{\lambda,\varepsilon}(u):=\mathcal J_{\lambda,\varepsilon}(\psi)$. Then $j_{\lambda,\varepsilon}$ is Fr\'echet differentiable on $L^2(0,T;\mathbb R^N)$, and for every $\delta u\in L^2(0,T;\mathbb R^N)$ with $\eta:=\sum_{k=1}^N\delta u_k h_k$,
\begin{align}\label{eq:DJ}
Dj_{\lambda,\varepsilon}(u)[\delta u]
=\lambda\!\int_0^T\!\!(Mu(t))\cdot\delta u(t)\,dt
+\!\int_0^T\!\!\int_0^1\!\Bigl[p\cdot J\nabla_x\eta(t,X)+q\cdot J\nabla_x^2\eta(t,X)\,a\Bigr]ds\,dt,
\end{align}
or, componentwise, with $k=1,\dots,N$,
\begin{equation}\label{eq:grad_uk}
\frac{\partial j_{\lambda,\varepsilon}}{\partial u_k}(t)
=\lambda\,(Mu(t))_k+\int_0^1\!\Bigl[p(t,s)\cdot J\nabla h_k(X)+q(t,s)\cdot J\nabla_x^2 h_k(X)\,a(t,s)\Bigr]ds.
\end{equation}
\end{prop}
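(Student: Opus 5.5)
The proof runs in three steps: differentiate the control-to-state map, then dualize the linearized system against the adjoint pair $(p,q)$ of Lemma~\ref{lem:adjoint_wp}, and finally handle the terminal term. The penalty $u\mapsto\frac\lambda2\int_0^T u^\top Mu\,dt$ is a continuous quadratic form on $L^2(0,T;\mathbb R^N)$. Its derivative is clearly $\lambda\int_0^T (Mu)\cdot\delta u\,dt$, so all the work is in the length term $u\mapsto-\int_0^1\ell_\varepsilon(a(T,s))\,ds$.

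\emph{Step 1: differentiability of the state map.} For $u,\delta u$ in $L^2(0,T;\mathbb R^N)$, let $(X_h,a_h)$ be the state for $u+h\,\delta u$. The candidate derivative $(\xi,\alpha)$ solves the linearized system
\begin{align*}
\partial_t\xi&=J\nabla_x^2\psi(t,X)\xi+J\nabla_x\eta(t,X),\\
\partial_t\alpha&=J\nabla_x^2\psi(t,X)\alpha+J\nabla_x^3\psi(t,X)[\xi,a]+J\nabla_x^2\eta(t,X)a,
\end{align*}
with $\xi(0,s)=\alpha(0,s)=0$. This is well posed exactly as in Lemma~\ref{lem:adjoint_wp}, since its coefficients are bounded by $L^1$-in-time functions through \eqref{eq:pt_bound} and \eqref{eq:Ma_u}.

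To get Fréchet (not merely Gâteaux) differentiability, I would estimate the remainder $r=(X_h-X-h\xi,\;a_h-a-h\alpha)$. Taylor-expanding $\nabla\psi$ and $\nabla^2\psi$ around $X$ requires third and fourth derivatives of the basis. Since $h_k\in C^\infty(\overline\Omega)$, the $W^{4,\infty}$ constant is also finite, so this is allowed. Combined with a first-order Lipschitz estimate $\|(X_h-X,a_h-a)\|_\infty\lesssim\|h\delta u\|_{L^2}$ obtained by Gronwall, this gives the bound $\|r\|_{L^\infty}\le C\|h\,\delta u\|_{L^2}^2$ uniformly in $s$. Here the constant $C$ depends on $\|u\|_{L^2}$ and on $\|\delta u\|_{L^2}$ bounded.

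This is the main obstacle. The coefficients are only $L^2$ in time, so every Gronwall factor has the form $\exp(c\int_0^T(|u|+|\delta u|)\,d\tau)$. One must also check that the quadratic terms, such as $\nabla^3\psi_{\delta u}[\xi,a]$ and $\nabla^4\psi[\xi,\xi,a]$, carry $L^1$-in-time weights bounded by $\|\delta u\|_{L^2}^2$. The Cauchy--Schwarz estimate $\int_0^T|u||\delta u|\,d\tau\le\|u\|_{L^2}\|\delta u\|_{L^2}$ handles these.

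Since $\ell_\varepsilon$ is $C^2$ with bounded Hessian, the chain rule then gives
\[
D\Bigl(-\int_0^1\ell_\varepsilon(a(T))\,ds\Bigr)[\delta u]=-\int_0^1\nabla\ell_\varepsilon(a(T,s))\cdot\alpha(T,s)\,ds=\int_0^1 q(T)\cdot\alpha(T)\,ds.
\]
The last equality uses \eqref{eq:adjoint_tc}.

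\emph{Step 2: duality identity.} All functions involved are absolutely continuous in $t$, so I would compute $\frac{d}{dt}\bigl(p\cdot\xi+q\cdot\alpha\bigr)$ using \eqref{eq:adjoint}. The homogeneous terms cancel: $-(J\nabla^2\psi)^\top p\cdot\xi+p\cdot J\nabla^2\psi\,\xi=0$, and similarly for $q$. The cross term $(J\nabla^3\psi[\cdot,a])^\top q\cdot\xi$ cancels against $q\cdot J\nabla^3\psi[\xi,a]$. What remains is the forcing
\[
p\cdot J\nabla_x\eta(t,X)+q\cdot J\nabla_x^2\eta(t,X)a.
\]

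\emph{Step 3: conclusion.} Integrating over $[0,T]\times[0,1]$ and using $\xi(0)=\alpha(0)=0$ and $p(T)=0$ turns $\int_0^1 q(T)\cdot\alpha(T)\,ds$ into the double integral in \eqref{eq:DJ}. The double integral is justified by Fubini, since the integrand is bounded by $C(|u|+|\delta u|)$ times continuous functions. Substituting $\eta=\sum_k\delta u_k h_k$ and reading off the coefficient of $\delta u_k(t)$ gives \eqref{eq:grad_uk}. That expression is an element of $L^2(0,T;\mathbb R^N)$, so the derivative is a bounded linear functional, which completes Fréchet differentiability.
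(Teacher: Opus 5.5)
Your proposal is correct and follows the same route as the paper's proof. It has three steps: Gr\"onwall-based Fr\'echet differentiability of the control-to-state map together with the linearized system; a duality identity obtained by pairing the linearized equations with $(p,q)$ and integrating in $t$ (your computation of $\frac{d}{dt}(p\cdot\xi+q\cdot\alpha)$ is the paper's integration by parts written in differentiated form); and the chain rule at the terminal time using $q(T)=-\nabla\ell_\varepsilon(a(T))$. Your observation that the quadratic remainder bound needs $W^{4,\infty}$ control of the basis makes explicit a point the paper's ``second Gr\"onwall pass'' leaves implicit. That control is available since $h_k\in C^\infty(\overline\Omega)$, even though the paper's constants $B^{(j)}$ stop at $j=3$.
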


\begin{proof}
\textit{Step 1. Linearized state.}
Since each $h_k\in C^\infty(\overline\Omega)$, the right-hand side of \eqref{eq:state} is $C^\infty$ in $(X,a)$ and affine in $u$. Gr\"onwall's inequality applied to the difference of two state trajectories, with the $L^2$-integrable Lipschitz constants furnished by the pointwise bound \eqref{eq:pt_bound} and \eqref{eq:Ma}, yields
\[
\|(X_{u+\delta u}-X_u,\,a_{u+\delta u}-a_u)\|_{C([0,T]\times[0,1])}\le C\,\|\delta u\|_{L^2(0,T;\mathbb R^N)},
\]
and a second Gr\"onwall pass on the quadratic remainder gives an $O(\|\delta u\|_{L^2}^2)$ bound. Hence $u\mapsto(X,a)$ is Fr\'echet differentiable from $L^2(0,T;\mathbb R^N)$ into $C([0,T]\times[0,1];\mathbb R^2\times\mathbb R^2)$, and its derivative $(\delta X,\delta a):=(DX[\delta u],Da[\delta u])$ solves
\begin{subequations}
\begin{align}
\partial_t\delta X &= J\nabla_x^2\psi(t,X)\,\delta X+\sum_{k=1}^N\delta u_k(t)\,J\nabla h_k(X),\label{eq:linearized_X}\\
\partial_t\delta a &= J\nabla_x^2\psi(t,X)\,\delta a+J\nabla_x^3\psi(t,X)[\delta X,a]+\sum_{k=1}^N\delta u_k(t)\,J\nabla_x^2 h_k(X)\,a,\label{eq:linearized_a}\\
\delta X(0,s)&=0,\qquad \delta a(0,s)=0.\label{eq:linearized_ic}
\end{align}
\end{subequations}

\vskip 5pt
\noindent\textit{Step 2. Lagrangian identity.}
Pair \eqref{eq:linearized_X} with $p$ and \eqref{eq:linearized_a} with $q$ in $L^2((0,T)\times(0,1))$ and integrate by parts in $t$; the boundary conditions \eqref{eq:linearized_ic} and $p(T,\cdot)=0$ of \eqref{eq:adjoint_tc} give
\begin{equation}\label{eq:IBP}
\begin{aligned}
\int_0^T\!\!\int_0^1 p\cdot\partial_t\delta X\,ds\,dt &= -\int_0^T\!\!\int_0^1 \partial_t p\cdot\delta X\,ds\,dt,\\
\int_0^T\!\!\int_0^1 q\cdot\partial_t\delta a\,ds\,dt &= \int_0^1 q(T,s)\cdot\delta a(T,s)\,ds-\int_0^T\!\!\int_0^1 \partial_t q\cdot\delta a\,ds\,dt.
\end{aligned}
\end{equation}
Substituting \eqref{eq:linearized_X}--\eqref{eq:linearized_a} on the left and \eqref{eq:adjoint_p}--\eqref{eq:adjoint_q} on the right of \eqref{eq:IBP}, the transpose identities
\[
p\cdot J\nabla_x^2\psi\,\delta X=(J\nabla_x^2\psi)^\top p\cdot\delta X,
\qquad
q\cdot J\nabla_x^3\psi[\delta X,a]=\bigl(J\nabla_x^3\psi[\,\cdot\,,a]\bigr)^\top q\cdot\delta X,
\]
and the analogous identity for $q\cdot J\nabla_x^2\psi\,\delta a$, pair up the bulk contributions and cancel them. What remains is
\begin{align}\label{eq:Lagrangian_id}
\int_0^1 q(T,s)\cdot\delta a(T,s)\,ds
=\int_0^T\!\!\int_0^1\sum_{k=1}^N\delta u_k(t)\Bigl[p\cdot J\nabla h_k(X)+q\cdot J\nabla_x^2 h_k(X)\,a\Bigr]ds\,dt.
\end{align}

\vskip 5pt
\noindent\textit{Step 3. Differentiating the cost.}
By \eqref{eq:cost} and \eqref{eq:gram_identity}, $$\mathcal J_{\lambda,\varepsilon}(\psi)=\frac\lambda2\int_0^T u^\top Mu\,dt-\int_0^1\ell_\varepsilon(a(T,s))\,ds.$$ Differentiating in $\delta u$ and using $q(T,s)=-\nabla\ell_\varepsilon(a(T,s))$,
\[
Dj_{\lambda,\varepsilon}(u)[\delta u]
=\lambda\!\int_0^T\!\!(Mu)\cdot\delta u\,dt+\int_0^1 q(T,s)\cdot\delta a(T,s)\,ds,
\]
which combined with \eqref{eq:Lagrangian_id} yields \eqref{eq:DJ}. Taking $\delta u_\ell(t)=\delta_{k\ell}\varphi(t)$ for arbitrary $\varphi\in L^2(0,T)$ specializes \eqref{eq:DJ} to \eqref{eq:grad_uk}.
\end{proof}

Before turning to the numerical scheme, we record a consistency property linking the continuous gradient \eqref{eq:grad_uk} to its discrete counterpart introduced in Section~\ref{sec:numerics}.

\begin{remark}[Discrete gradient consistency]\label{rem:disc_cont_consistency}
The continuous penalty $$\frac\lambda 2\int_0^T u(t)^\top M u(t)\,dt,$$ equivalently $\frac\lambda 2\int_0^T\|\psi(t,\cdot)\|_{L^2(\Omega)}^2\,dt$ by \eqref{eq:gram_identity}, discretizes consistently to the trapezoidal sum $\frac{\Delta t}{2}\sum_n u_n^\top \widetilde M u_n$, with $\widetilde M$ a consistent discretization of $M$. In the implementation \eqref{eq:Jh_impl}--\eqref{eq:grad_impl} we work with a basis whose $L^2(\Omega)$ Gram matrix is diagonal (cf.\ Section~\ref{sec:ansatz}), so $\widetilde M=\mathrm{diag}(\gamma_k)$ with $\gamma_k=\|h_k\|_{L^2(\Omega)}^2$; in particular, the canonical case $\gamma_k\equiv 1$ corresponds to an $L^2(\Omega)$-orthonormal basis. Correspondingly, the $\lambda\bigl(Mu(t)\bigr)_k$ contribution in \eqref{eq:grad_uk} discretizes to $\Delta t\,\gamma_k u_{k,n}$, matching \eqref{eq:grad_impl}. Algebraic exactness of this discrete gradient is established in Remark~\ref{rem:discrete_exact}.
\end{remark}

For use in the numerical implementation of Section~\ref{sec:numerics}, we record that on $\mathcal U_{\mathrm{ad}}^{R_u}$ the velocity field and its gradient read
\[
\mathbf{v}_N(t,x)
=
\sum_{k=1}^N u_k(t)\,J\nabla h_k(x),
\qquad
\nabla_x\mathbf{v}_N(t,x)
=
\sum_{k=1}^N u_k(t)\,J\nabla_x^2 h_k(x).
\]
The coupled system \eqref{eq:state}--\eqref{eq:adjoint}--\eqref{eq:grad_uk} then constitutes the first-order necessary optimality conditions for the reduced optimal control problem.

\section{Numerical framework for the optimal control problem}\label{sec:numerics}

We now describe the full numerical pipeline for solving the optimal control problem \eqref{eq:ocp}: the control parameterization, the symplectic time integrator, the discrete adjoint and gradient computation, and the optimization algorithm.

\subsection{Finite-dimensional ansatz and control parameterization}\label{sec:ansatz}

Following the framework of Section~\ref{sec:ocp}, the stream function is expanded in a finite-dimensional basis $\{h_k\}_{k=1}^N\subset C^\infty(\overline\Omega)\cap H_0^1(\Omega)$:
\begin{equation*}
\psi_N(t,x)=\sum_{k=1}^N u_k(t)\,h_k(x),
\end{equation*}
where $u=(u_1,\dots,u_N)\in L^2(0,T;\mathbb R^N)$ are the control variables. The associated velocity fields $\mathbf v_k:=\nabla^\perp h_k=J\nabla h_k$ are divergence-free by construction. The specific choice of basis is problem-dependent; in Section~\ref{sec:optimal_flows}, we employ the cellular-flow basis, and in Section~\ref{sec:optimal_Doswell}, we use a Doswell-type multi-vortex velocity field.

\subsection{Symplectic ODE solver for the interface}\label{sec:symplectic}

The initial material interface $\Sigma(0)$ is discretized by $N_p$ marker points $X_0^1,\dots,X_0^{N_p}$, distributed uniformly with respect to arc length along $\Sigma(0)$.

Since the flow map \eqref{eq:hamiltonian_flow} is Hamiltonian, we employ a symplectic time integrator \cite{HLW06} to preserve the volume-preserving structure and prevent secular energy drift. We use the \emph{implicit midpoint rule}, the simplest symplectic Runge--Kutta method \cite{HLW06}: for $\dot y=f(t,y)$,
\begin{equation}\label{eq:midpoint_def}
y_{n+1}=y_n+\Delta t\,f\!\left(\frac{t_n+t_{n+1}}{2},\,\frac{y_n+y_{n+1}}{2}\right).
\end{equation}
This method is second-order accurate, A-stable, symmetric (self-adjoint), and symplectic. Because of its symmetry, transposing the linearized forward map yields the same midpoint scheme applied backward in time, so the discrete forward and adjoint solves are structurally consistent.

Applied to our setting, the marker update reads
\begin{equation}\label{eq:midpoint_marker}
X_{n+1}^j
=
X_n^j
+\Delta t\sum_{k=1}^N u_{k,n}\,
\mathbf v_k\!\left(\frac{X_n^j+X_{n+1}^j}{2}\right),
\end{equation}
where $u_{k,n}:=u_k(t_n)$ and $\mathbf v_k:=J\nabla h_k$. Throughout, the controls $u_k$ are taken to be piecewise constant on each interval $[t_n,t_{n+1})$, so that the velocity field is time-independent within each step and the implicit midpoint rule~\eqref{eq:midpoint_def} applies directly. The implicit equation is solved by five fixed-point iterations, initialized with an explicit Euler predictor.

The terminal polyline length is computed using the same $\varepsilon$-regularization as in the continuous cost \eqref{eq:cost}:
\begin{equation}\label{eq:polyline_length}
\mathcal L_{h,\varepsilon}(X_M)
:=
\sum_{j=1}^{N_p-1}\ell_\varepsilon(X_M^{j+1}-X_M^j),
\qquad
\ell_\varepsilon(z):=\sqrt{|z|^2+\varepsilon^2},
\end{equation}
where $M$ is the number of time steps and $X_M^j$ denotes the position of the $j$-th marker at the final time $t_M=T$. This quantity is the discrete geometric counterpart of the continuous length \eqref{eq:length}, and the smoothing parameter $\varepsilon>0$ ensures differentiability at zero chord length; the smoothing is a deliberate choice that aligns the cost with first-order conjugate-gradient methods at minimal practical cost (see Section~\ref{sec:numerics_exp}), with nonsmooth alternatives (ADMM \cite{Glowinski1992}, proximal-gradient methods on the unsmoothed length functional) left as future work. As $N_p\to\infty$ with the marker spacing $|X_M^{j+1}-X_M^j|=O(1/N_p)$, the polyline length $\mathcal L_{h,\varepsilon}(X_M)$ converges to the regularized continuous length $\int_0^1\ell_\varepsilon(\partial_sX(T,s))\,ds$ whenever the interface $s\mapsto X(T,s)$ is $W^{1,1}$-regular, and the gradient of $\mathcal L_{h,\varepsilon}$ with respect to marker positions approximates the shape derivative induced by the continuous tangent-based formulation. For interfaces in $W^{2,\infty}([0,1];\overline\Omega)$ with uniformly spaced markers $s_j=(j-1)/(N_p-1)$, a standard piecewise-linear quadrature estimate yields the rate
\begin{equation*}
\Bigl|\,\mathcal L_{h,\varepsilon}(X_M)-\int_0^1\ell_\varepsilon\!\bigl(\partial_sX(T,s)\bigr)\,ds\,\Bigr|
\le
\frac{C}{N_p}\,\|\partial_s^2X(T,\cdot)\|_{L^\infty([0,1])},
\end{equation*}
with $C$ depending only on $\varepsilon$ and $\|\partial_sX(T,\cdot)\|_{L^\infty}$, in line with Theorem~\ref{prop:H3}(ii) Step~3. The fully discrete adjoint constructed below is therefore consistent, in the marker-refinement limit, with the continuous $(p,q)$ sensitivity of Theorem~\ref{prop:H3}; in finite $N_p$ it is the algebraically exact adjoint of the implemented polyline objective.

\subsection{Discrete adjoint and gradient computation}\label{sec:discrete_adjoint}

We minimize the fully discrete objective
\begin{equation}\label{eq:Jh_impl}
J_h(u)
:=
-\mathcal L_{h,\varepsilon}(X_M)
+\frac{\Delta t}{2}\sum_{n=0}^{M-1}\sum_{k=1}^N
\gamma_k\,u_{k,n}^2,
\end{equation}
where $\gamma_k>0$ are the regularization weights and $u=\{u_{k,n}\}_{k=1,\dots,N}^{n=0,\dots,M-1}$ is the full set of discrete controls. This is the direct discretization of the continuous $L^2$ penalty $\frac\lambda2\int_0^T|u(t)|^2\,dt$ in \eqref{eq:cost} (with $\lambda$ absorbed into the per-component weights $\gamma_k$); the choice $\gamma_k\equiv\lambda$ recovers the canonical case, and nonuniform $\gamma_k$ allows per-component scaling without affecting the exactness of the discrete gradient derived below.

We emphasize that the adjoint construction below is the exact adjoint of the fully discrete marker/polyline objective \eqref{eq:Jh_impl}, obtained by transposing the linearized forward map of the implicit-midpoint marker dynamics. It is \emph{not} a direct discretization of the continuous $(X,a,p,q)$ adjoint system \eqref{eq:adjoint}: the continuous adjoint tracks separate adjoint variables $p$ and $q$ for position and tangent, whereas at the discrete level the tangent information is implicitly encoded in the finite differences $X_M^{j+1}-X_M^j$, so the discrete adjoint operates directly on marker positions. Both adjoints are adjoints of the same physical sensitivity at their respective levels (continuous and fully discrete), but the implemented algorithm is built from the discrete one and inherits its algebraic exactness. The terminal condition is the gradient of the regularized polyline length \eqref{eq:polyline_length}: Let $p_n^j\in\mathbb R^2$ denote the discrete adjoint variable at time step $n$ and marker $j$. Then
\[
p_M^j=-\partial_{X_M^j}\mathcal L_{h,\varepsilon}(X_M).
\]
Defining the regularized unit tangent vectors
\[
\tau_M^j:=\frac{X_M^{j+1}-X_M^j}{\sqrt{|X_M^{j+1}-X_M^j|^2+\varepsilon^2}},
\qquad j=1,\dots,N_p-1,
\]
the terminal adjoint values are
\begin{equation*}
p_M^1=\tau_M^1,
\qquad
p_M^{N_p}=-\tau_M^{N_p-1},
\qquad
p_M^j=\tau_M^j-\tau_M^{j-1}
\quad (2\le j\le N_p-1).
\end{equation*}

The backward adjoint step is also midpoint-based. Writing
\begin{equation}\label{eq:Gmid_impl}
G_n^j
:=
\sum_{k=1}^N u_{k,n}\,\nabla \mathbf v_k\!\left(\frac{X_n^j+X_{n+1}^j}{2}\right),
\end{equation}
the discrete adjoint update is
\begin{equation}\label{eq:adjoint_midpoint_impl}
p_n^j
=
p_{n+1}^j
+\Delta t\,(G_n^j)^\top\,\frac{p_n^j+p_{n+1}^j}{2}.
\end{equation}
This implicit equation is again solved by five fixed-point iterations, mirroring the forward solver.

The gradient of the discrete objective with respect to the controls is then assembled as
\begin{equation}\label{eq:grad_impl}
\frac{\partial J_h}{\partial u_{k,n}}
=
\gamma_k\,\Delta t\,u_{k,n}
+\Delta t\sum_{j=1}^{N_p}
\frac{p_n^j+p_{n+1}^j}{2}\cdot
\mathbf v_k\!\left(\frac{X_n^j+X_{n+1}^j}{2}\right),
\qquad k=1,\dots,N.
\end{equation}
\begin{remark}[Discrete gradient exactness]\label{rem:discrete_exact}
We verify algebraically that \eqref{eq:adjoint_midpoint_impl}--\eqref{eq:grad_impl} yield the exact gradient of $J_h$.
Consider the one-step forward map $F_n:X_n\mapsto X_{n+1}$ defined implicitly by~\eqref{eq:midpoint_marker}. When the fixed-point iteration converges, the implicit relation
\[
X_{n+1}=X_n+\Delta t\,f\!\Big(\frac{X_n+X_{n+1}}{2},u_n\Big)
\]
can be differentiated with respect to $X_n$, yielding $\Phi_n=I+\frac{\Delta t}{2}G_n(I+\Phi_n)$, hence
\[
\Big(I-\frac{\Delta t}{2}\,G_n\Big)\,\Phi_n
=I+\frac{\Delta t}{2}\,G_n,
\]
where $\Phi_n:=\partial X_{n+1}/\partial X_n$ is the one-step Jacobian and $G_n$ is the velocity-gradient matrix~\eqref{eq:Gmid_impl} evaluated at the midpoint. By the chain rule, the gradient of $J_h$ satisfies the backward recursion $p_n=\Phi_n^\top p_{n+1}$. Transposing the relation above gives
\[
\Phi_n^\top\Big(I-\frac{\Delta t}{2}\,G_n^\top\Big)
=I+\frac{\Delta t}{2}\,G_n^\top,
\]
so the backward recursion becomes
\[
\Big(I-\frac{\Delta t}{2}\,G_n^\top\Big)p_n
=\Big(I+\frac{\Delta t}{2}\,G_n^\top\Big)p_{n+1},
\]
which is precisely the midpoint adjoint step~\eqref{eq:adjoint_midpoint_impl}. Similarly, differentiating the forward map with respect to $u_{k,n}$ and contracting with $\frac{1}{2}(p_n+p_{n+1})$ gives the gradient formula~\eqref{eq:grad_impl}. This ``discretize-then-optimize'' consistency holds because the implicit midpoint rule is self-adjoint: transposing the linearized forward map yields the same midpoint scheme applied backward in time.
\end{remark}

In practice, five fixed-point iterations suffice for convergence to machine precision at the step sizes used in our experiments.

\subsection{Optimization algorithm}\label{sec:algorithm}

The control update is performed by a Polak--Ribi\`ere (PR) conjugate-gradient (CG) method with a \emph{projected} Armijo backtracking line search, combined with the closed-form projection onto the $L^2(0,T;\mathbb R^N)$-ball of radius $R_u$ that enforces the admissibility constraint $\|u\|_{L^2(0,T;\mathbb R^N)}\le R_u$ built into $\mathcal U_{\mathrm{ad}}^{R_u}$ in \eqref{eq:Uad}:
\begin{equation}\label{eq:projection}
\Pi_{B_{R_u}}(u):=\min\!\Bigl(1,\frac{R_u}{\|u\|_{L^2(0,T;\mathbb R^N)}}\Bigr)\,u.
\end{equation}
The Armijo decrease is checked at the projected trial point so that every accepted iterate is admissible by construction; the scheme reduces to the unconstrained PR-CG method whenever the ball is inactive along the line search. The complete procedure is stated in Algorithm~\ref{alg:optimize}.

\begin{algorithm}[t]
\caption{Adjoint-based optimization of interface length}\label{alg:optimize}
\begin{algorithmic}[1]
    \State \textbf{Input:} Initial interface $\gamma_0$, basis functions $\{h_k\}_{k=1}^N$, parameters $T,\lambda>0$, regularization weights $\{\gamma_k\}_{k=1}^N$, ball radius $R_u>0$, number of markers $N_p$, time steps $M$, Armijo constant $c_1>0$, stopping tolerance $\varepsilon_{\mathrm{tol}}>0$.
    \State \textbf{Output:} Optimized discrete controls $\{u_{k,n}^*\}$.
    \State \textbf{Initialize:} Set initial controls $\{u_{k,n}^{(0)}\}$, iteration counter $\ell:=0$.
    \State Compute $X_{\mathrm{traj}}^{(0)}$ by forward solve \eqref{eq:midpoint_marker} and $J_h^{(0)}:=J_h(u^{(0)})$.
    \State Compute discrete adjoint $\{p_n^{j,(0)}\}$ by backward solve \eqref{eq:adjoint_midpoint_impl}.
    \State Compute gradient $g^{(0)}$ via \eqref{eq:grad_impl}; set search direction $d^{(0)}:=-g^{(0)}$.
    \Repeat
        \State \textbf{(1) Projected line search.} Find step size $\eta>0$ satisfying the Armijo condition evaluated at the projected iterate:
        \[
            J_h\!\bigl(\Pi_{B_{R_u}}(u^{(\ell)}+\eta\,d^{(\ell)})\bigr)\le J_h(u^{(\ell)})+c_1\,\eta\,\langle g^{(\ell)},d^{(\ell)}\rangle,
        \]
        where $\Pi_{B_{R_u}}$ is given by \eqref{eq:projection}.
        \State \textbf{(2) Update controls.} Set $u^{(\ell+1)}:=\Pi_{B_{R_u}}(u^{(\ell)}+\eta\,d^{(\ell)})$.
        \State \textbf{(3) Forward solve.} Compute $X_{\mathrm{traj}}^{(\ell+1)}$ and $J_h^{(\ell+1)}$.
        \State \textbf{(4) Adjoint solve.} Compute $\{p_n^{j,(\ell+1)}\}$ by backward solve.
        \State \textbf{(5) Gradient and CG update.}
        \begin{enumerate}[(i)]
            \item Compute $g^{(\ell+1)}$ via \eqref{eq:grad_impl}.
            \item Compute Polak--Ribi\`ere parameter:
            $\beta^{(\ell)}:=\max\!\Big(0,\;\frac{\langle g^{(\ell+1)},g^{(\ell+1)}-g^{(\ell)}\rangle}{\|g^{(\ell)}\|^2}\Big)$.
            \item Update search direction: $d^{(\ell+1)}:=-g^{(\ell+1)}+\beta^{(\ell)}d^{(\ell)}$.
            \item If $\langle g^{(\ell+1)},d^{(\ell+1)}\rangle\ge 0$, reset $d^{(\ell+1)}:=-g^{(\ell+1)}$.
        \end{enumerate}
        \State Set $\ell\leftarrow\ell+1$.
    \Until{$\displaystyle\frac{|J_h^{(\ell)}-J_h^{(\ell-1)}|}{1+|J_h^{(\ell)}|}<\varepsilon_{\mathrm{tol}}$}
\end{algorithmic}
\end{algorithm}

\section{Numerical experiments}\label{sec:numerics_exp}

In this section, we demonstrate the effectiveness of the optimal control approach and validate the proposed framework. We first analyze the mixing properties of two stationary velocity fields (Section~\ref{sec:stationary_flows}), then apply the optimization algorithm to design optimal time-dependent flows for the cellular-flow (Section~\ref{sec:optimal_flows}) and Doswell frontogenesis (Section~\ref{sec:optimal_Doswell}) settings. A systematic comparison with the Eulerian Sobolev-norm optimizer of \cite{hu2026structure} is presented in Section~\ref{sec:comparison}. We then investigate the influence of the number of basis functions by extending to $N=4$ modes (Section~\ref{sec:N4}). Throughout this section the smoothing parameter $\varepsilon$ in \eqref{eq:ell} is fixed at $10^{-8}$, several orders of magnitude below the typical chord length, so that the regularization is numerically inactive in all reported experiments. The code is implemented in Python using PyTorch and publicly available at \url{https://github.com/DCN-FAU-AvH/Mixing-Hamiltolnian}.

Throughout this section, the initial scalar distribution for the Eulerian transport solve is
\begin{equation}\label{eq:theta_init_1}
\theta^0(x_1,x_2) := \tanh\!\left(\frac{x_2-0.5}{0.01}\right),
\qquad (x_1,x_2)\in\Omega,
\end{equation}
and the initial material interface is the zero contour $\Sigma(0)=\{(x_1,x_2)\in\Omega:x_2=0.5\}$. In all optimization experiments, the regularization weights are set to $\gamma_1=\gamma_2=10^{-5}$ unless stated otherwise. All runs use the projected-gradient Algorithm~\ref{alg:optimize} with ball radius $R_u=10$; the terminal ratio $\|u^*\|_{L^2(0,T;\mathbb R^N)}/R_u$ is reported for each experiment to indicate whether the projection step is active (saturates the ball) or inactive.

\subsection{Stationary flows}\label{sec:stationary_flows}

\subsubsection{Cellular flow}
Cellular flows~\cite{Fannjiang2006Quenching} are standard benchmark velocity fields in the study of fluid mixing. We consider the steady divergence-free velocity field on $\Omega=(0,1)^2$:
\begin{equation}\label{eq:cellular}
\mathbf{v}(x_1,x_2)=
\begin{bmatrix}
-\pi\sin(\pi x_1)\cos(\pi x_2)\\[0.3em]
\ \ \pi\cos(\pi x_1)\sin(\pi x_2)
\end{bmatrix},
\end{equation}
corresponding to the stream function $h(x_1,x_2)=\sin(\pi x_1)\sin(\pi x_2)$.

We simulate over $t\in[0,5]$ with Eulerian mesh $\Delta x=\Delta t=1/1000$ and $N_p=100{,}000$ Lagrangian markers. Figures~\ref{fig:interface_cellular}--\ref{fig:theta_cellular} show the interface and scalar field evolution. As quantified in Figure~\ref{fig:cellular_mixnorm-length}, the interface length grows approximately linearly, consistent with polynomial decay of $\|\theta_h\|_{\dot{H}^{-1}(\Omega)}$. This algebraic behavior is consistent with the theoretical mixing literature for smooth autonomous two-dimensional Hamiltonian flows, in which the action--angle structure precludes genuinely exponential mix-norm decay even in the presence of local hyperbolic stretching; exponential mixing in this setting is associated instead with genuinely time-dependent mechanisms, such as alternating shears, random protocols, or uniformly hyperbolic maps \cite{zelati2024mixing}. In Sections~\ref{sec:optimal_flows}--\ref{sec:optimal_Doswell} we therefore expect the optimized \emph{time-dependent} controls, rather than any stationary refinement of the present field, to be the relevant route to faster decay; the rates reported there should accordingly be read as fitted exponential rates over the simulated horizon, not as asymptotic statements for arbitrary incompressible flows.

\begin{remark}[Local versus global sensitivity]\label{rem:local_global}
A consistent observation across all experiments is that the interface length begins to grow immediately, whereas the $\dot{H}^{-1}$ mix-norm remains nearly constant during an initial transient before decaying. This reflects the different nature of the two quantities: the interface length is a \emph{local} geometric measure that responds as soon as stretching occurs anywhere along the curve, while the $\dot{H}^{-1}$ norm is a \emph{global} measure that requires fine-scale filaments to affect the large-scale scalar distribution before registering decay \cite{mathew2005multiscale, thiffeault2012using}.
\end{remark}

\begin{figure}[ht]
  \centering
  \begin{subfigure}[b]{0.16\textwidth}
    \includegraphics[width=\textwidth]{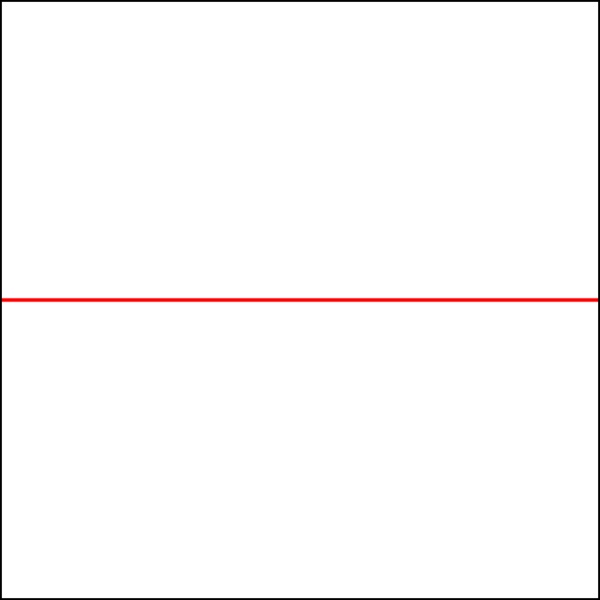}
    \caption{$t=0$}
  \end{subfigure}
  \begin{subfigure}[b]{0.16\textwidth}
    \includegraphics[width=\textwidth]{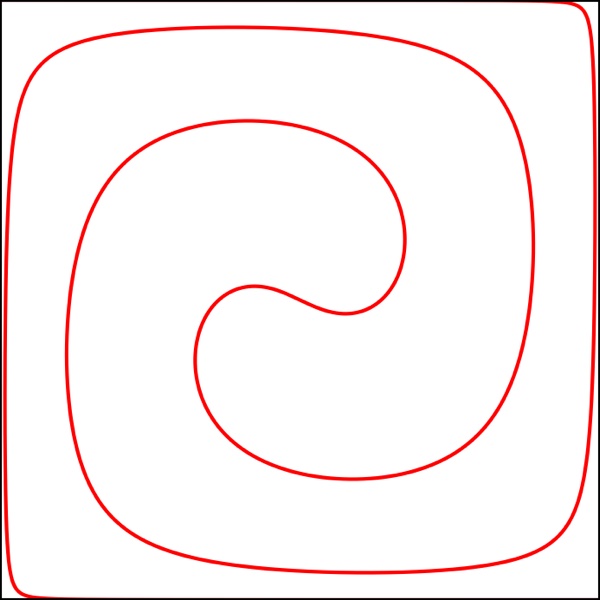}
    \caption{$t=1$}
  \end{subfigure}
  \begin{subfigure}[b]{0.16\textwidth}
    \includegraphics[width=\textwidth]{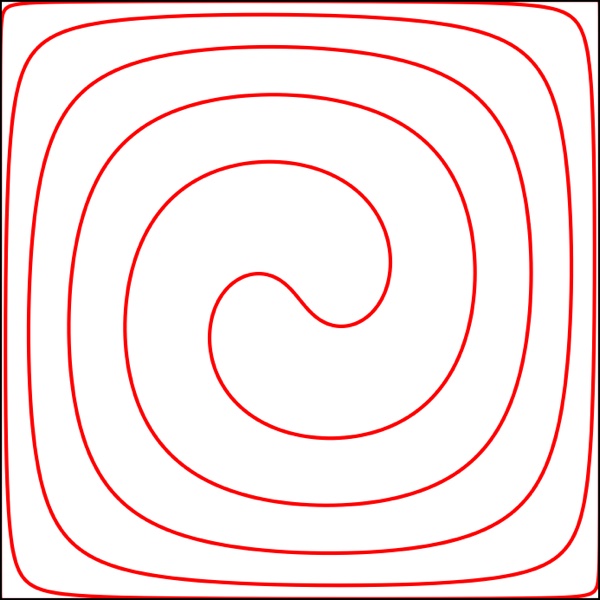}
    \caption{$t=2$}
  \end{subfigure}
  \begin{subfigure}[b]{0.16\textwidth}
    \includegraphics[width=\textwidth]{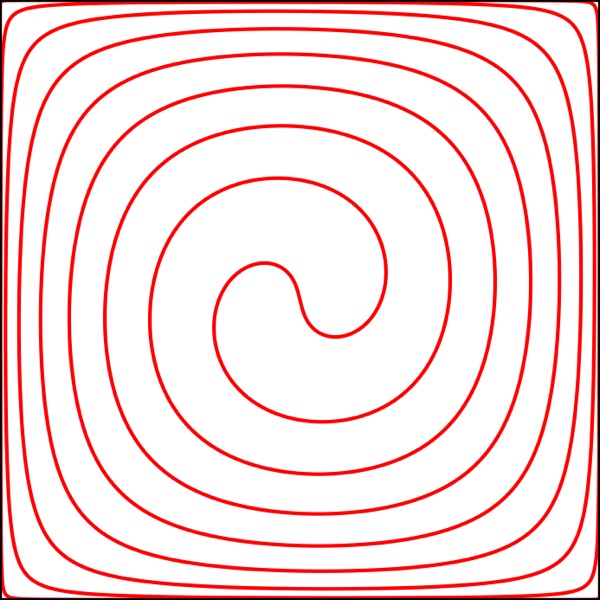}
    \caption{$t=3$}
  \end{subfigure}
  \begin{subfigure}[b]{0.16\textwidth}
    \includegraphics[width=\textwidth]{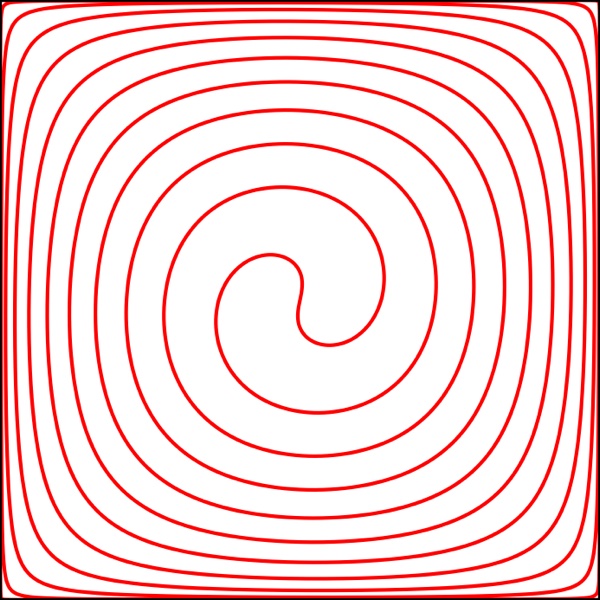}
    \caption{$t=4$}
  \end{subfigure}
  \begin{subfigure}[b]{0.16\textwidth}
    \includegraphics[width=\textwidth]{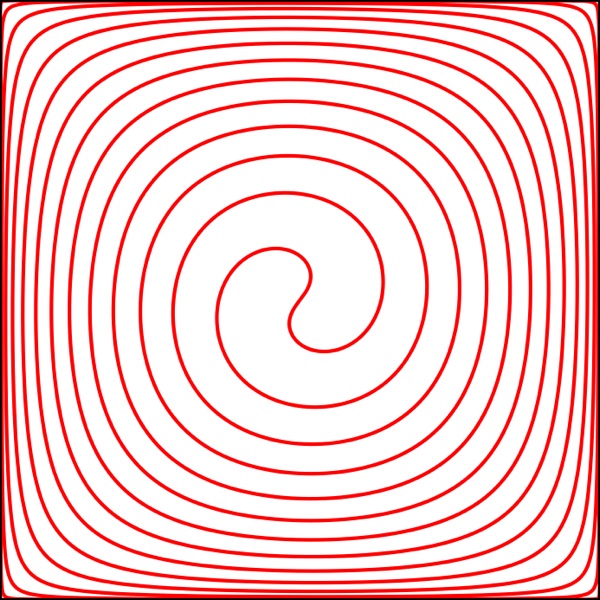}
    \caption{$t=5$}
  \end{subfigure}
  \caption{Evolution of the interface under the steady cellular flow \eqref{eq:cellular} for $t\in[0,5]$.}
  \label{fig:interface_cellular}
\end{figure}

\begin{figure}[ht]
  \centering
  \begin{subfigure}[b]{0.16\textwidth}
    \includegraphics[width=\textwidth]{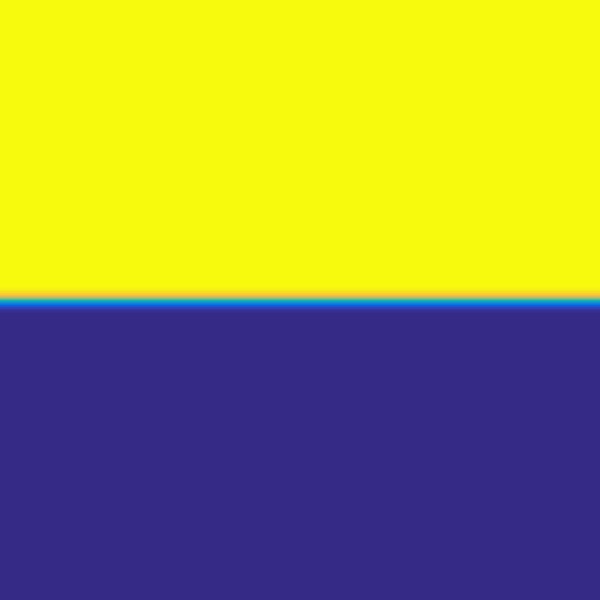}
    \caption{$t=0$}
  \end{subfigure}
  \begin{subfigure}[b]{0.16\textwidth}
    \includegraphics[width=\textwidth]{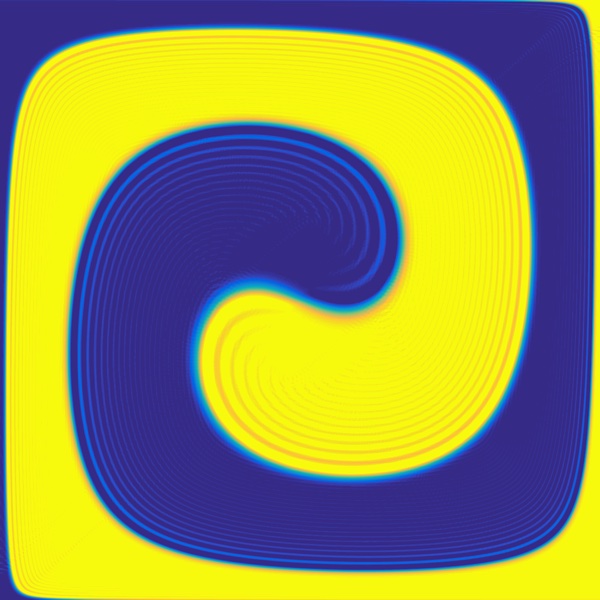}
    \caption{$t=1$}
  \end{subfigure}
  \begin{subfigure}[b]{0.16\textwidth}
    \includegraphics[width=\textwidth]{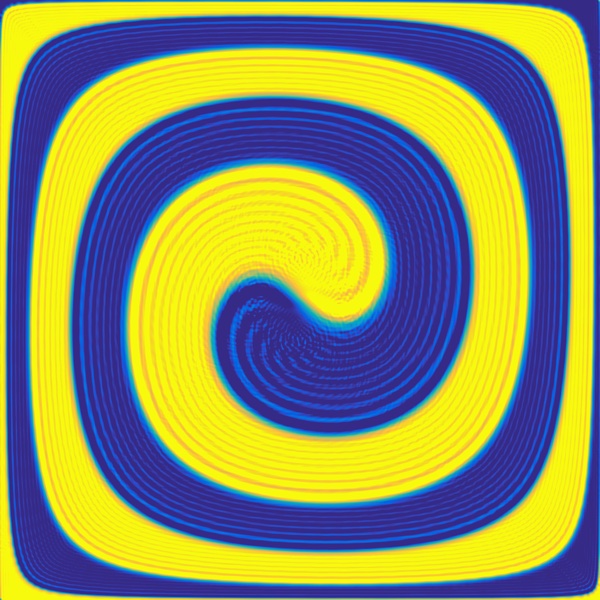}
    \caption{$t=2$}
  \end{subfigure}
  \begin{subfigure}[b]{0.16\textwidth}
    \includegraphics[width=\textwidth]{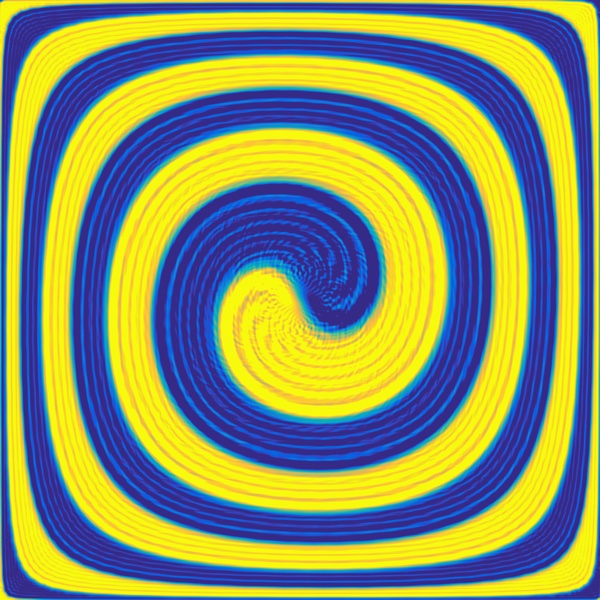}
    \caption{$t=3$}
  \end{subfigure}
  \begin{subfigure}[b]{0.16\textwidth}
    \includegraphics[width=\textwidth]{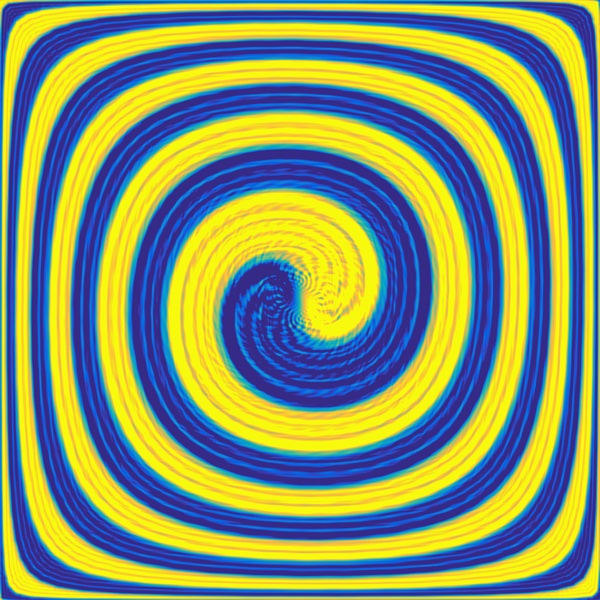}
    \caption{$t=4$}
  \end{subfigure}
  \begin{subfigure}[b]{0.16\textwidth}
    \includegraphics[width=\textwidth]{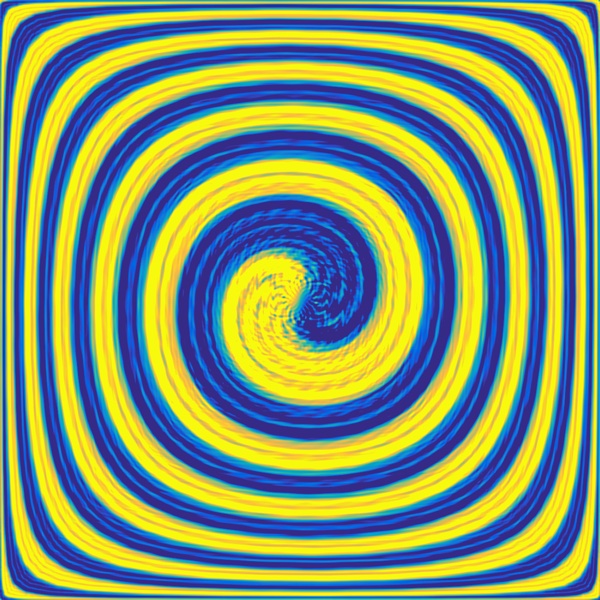}
    \caption{$t=5$}
  \end{subfigure}
  \caption{Evolution of $\theta_h$ under the steady cellular flow for $t\in[0,5]$.}
  \label{fig:theta_cellular}
\end{figure}

\begin{figure}[ht]
  \centering
  \begin{subfigure}[b]{0.475\textwidth}
    \centering
    \includegraphics[height=3cm]{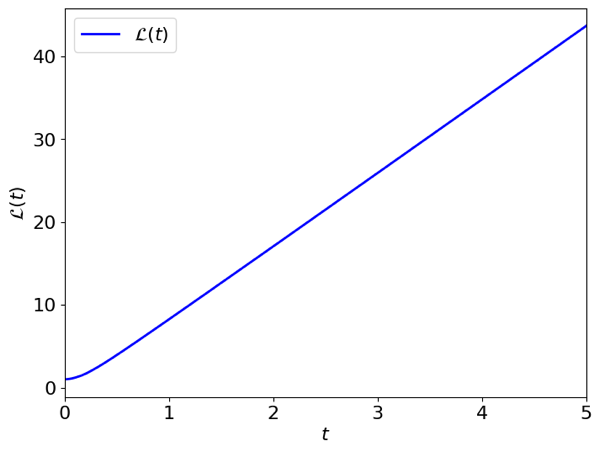}
    \caption{Interface length $\mathcal{L}(t)$ (linear growth).}
    \label{fig:cellular_length}
  \end{subfigure}
   \begin{subfigure}[b]{0.49\textwidth}
    \centering
    \includegraphics[height=3cm]{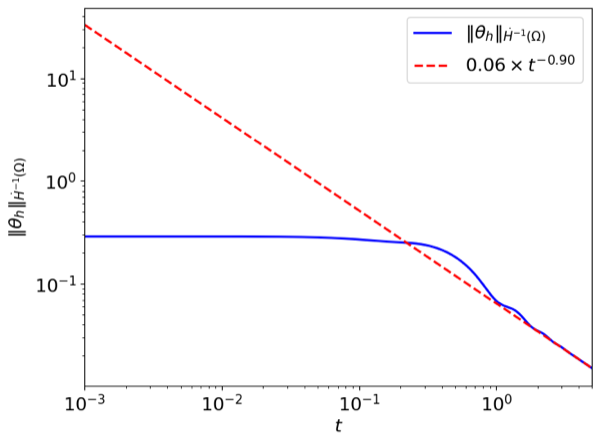}
    \caption{$\dot{H}^{-1}$ mix-norm (polynomial decay).}
    \label{fig:cellular_mixnorm}
  \end{subfigure}
    \caption{Interface growth and mix-norm decay under the steady cellular flow.}
  \label{fig:cellular_mixnorm-length}
\end{figure}

\subsubsection{Doswell frontogenesis} \label{sec:Doswell-stable}
To verify our observations in a setting with circular geometry, we consider the Doswell frontogenesis model \cite{doswell1984kinematic}. The domain is the disc $\Omega=\{\mathbf{x}\in\mathbb{R}^2:|\mathbf{x}-\mathbf{x}_c|<R\}$ with center $\mathbf{x}_c=(0.5,0.5)$ and radius $R=0.5$, so that the initial interface $\Sigma(0)=\{x_2=0.5\}$ passes through the disc center. The velocity field is
\begin{equation}
\label{eq:v_Doswell}
\mathbf{v}(x_1,x_2)=
 \begin{bmatrix}
      -(x_2-0.5)\,g(r)          \\[0.3em]
        \phantom{-}(x_1-0.5)\,g(r)
     \end{bmatrix},
\end{equation}
where $r:=\sqrt{(x_1-0.5)^2+(x_2-0.5)^2}$, $\overline{v}=2.59807$, and the angular velocity profile is
\[
    g(r):=\frac{\overline{v}}{r}\,\operatorname{sech}^2(r)\,\tanh(r).
\]
The velocity field \eqref{eq:v_Doswell} is divergence-free and tangent to $\partial\Omega$. The transport equation is discretized on a polar grid ($N_r=N_\varphi=1000$) over $t\in[0,10]$ with $\Delta t=1/1000$ and $N_p=100{,}000$ markers. As shown in Figures~\ref{fig:interface_Doswell}--\ref{fig:Doswell_mixnorm-length}, the same qualitative behavior is observed: linear interface growth and polynomial mix-norm decay.

\begin{figure}[ht]
  \centering
  \begin{subfigure}[b]{0.16\textwidth}
    \includegraphics[width=\textwidth]{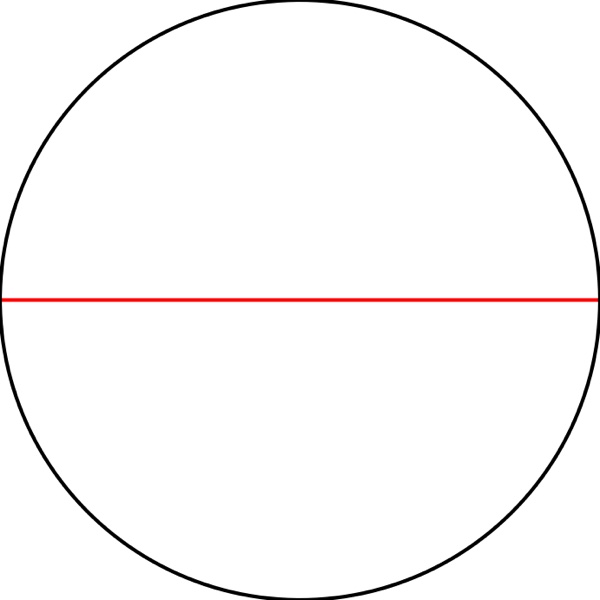}
    \caption{$t=0$}
  \end{subfigure}
  \begin{subfigure}[b]{0.16\textwidth}
    \includegraphics[width=\textwidth]{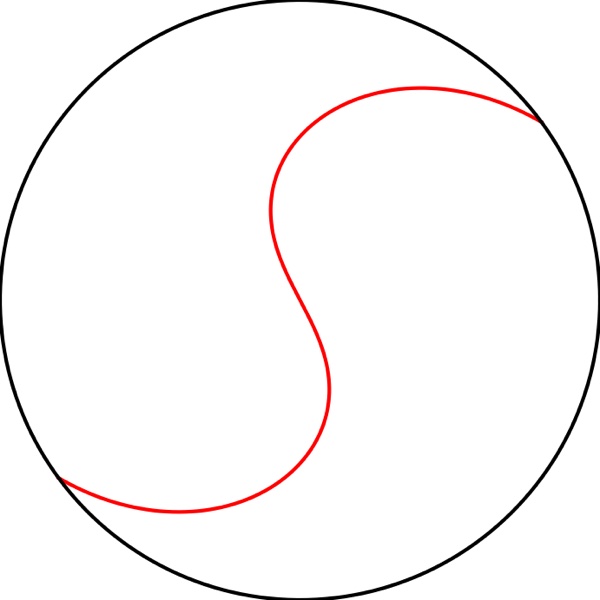}
    \caption{$t=2$}
  \end{subfigure}
  \begin{subfigure}[b]{0.16\textwidth}
    \includegraphics[width=\textwidth]{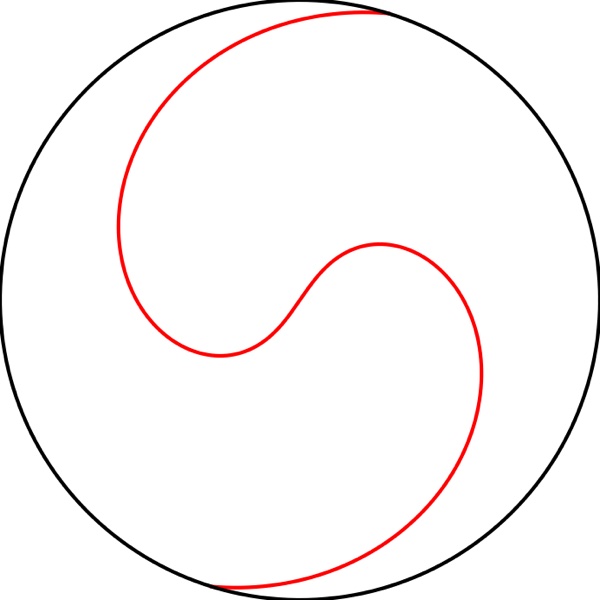}
    \caption{$t=4$}
  \end{subfigure}
  \begin{subfigure}[b]{0.16\textwidth}
    \includegraphics[width=\textwidth]{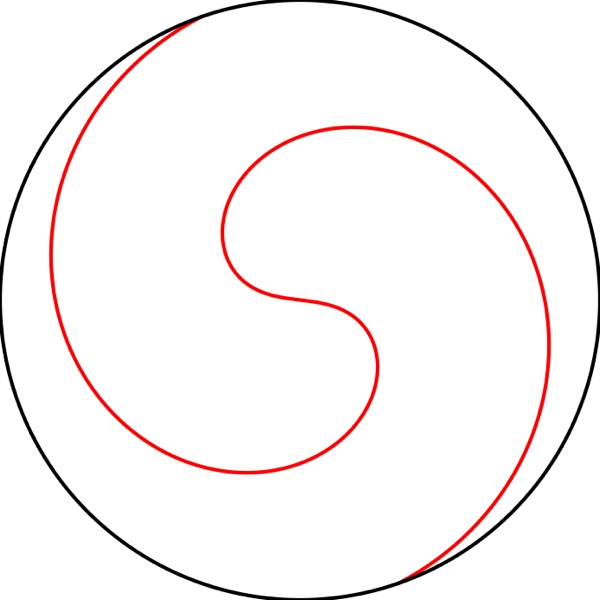}
    \caption{$t=6$}
  \end{subfigure}
  \begin{subfigure}[b]{0.16\textwidth}
    \includegraphics[width=\textwidth]{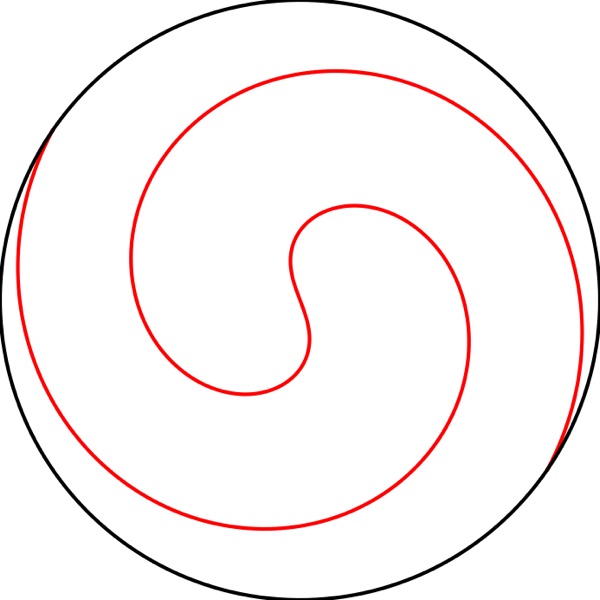}
    \caption{$t=8$}
  \end{subfigure}
  \begin{subfigure}[b]{0.16\textwidth}
    \includegraphics[width=\textwidth]{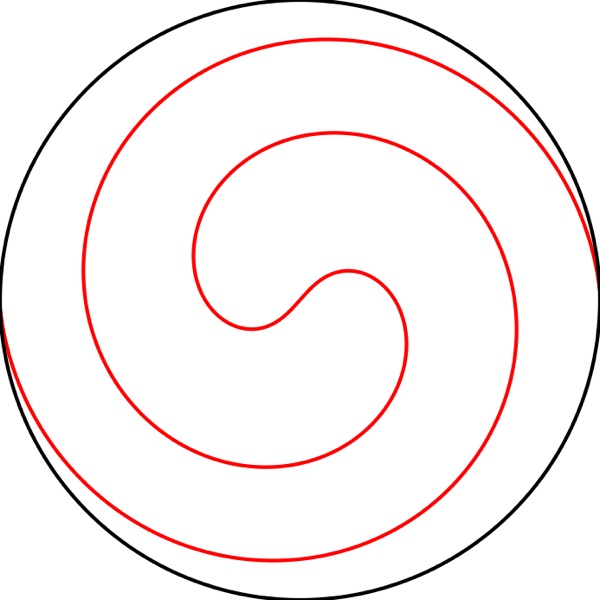}
    \caption{$t=10$}
  \end{subfigure}
  \caption{Evolution of the interface under Doswell frontogenesis for $t\in[0,10]$.}
  \label{fig:interface_Doswell}
\end{figure}

\begin{figure}[ht]
  \centering
  \begin{subfigure}[b]{0.16\textwidth}
    \includegraphics[width=\textwidth]{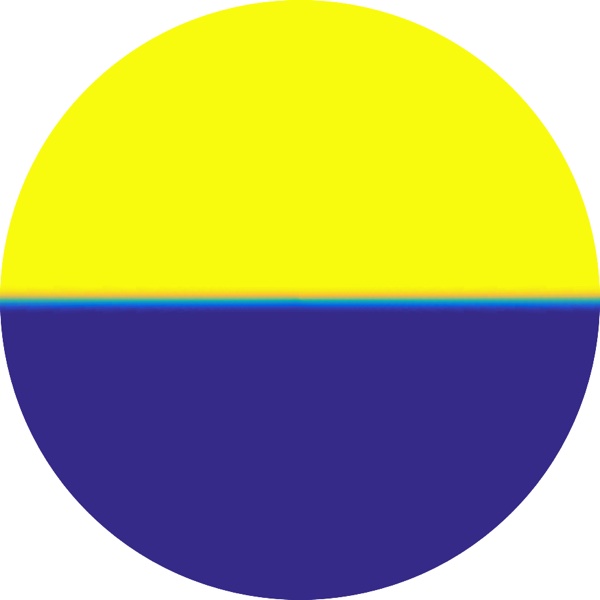}
    \caption{$t=0$}
  \end{subfigure}
  \begin{subfigure}[b]{0.16\textwidth}
    \includegraphics[width=\textwidth]{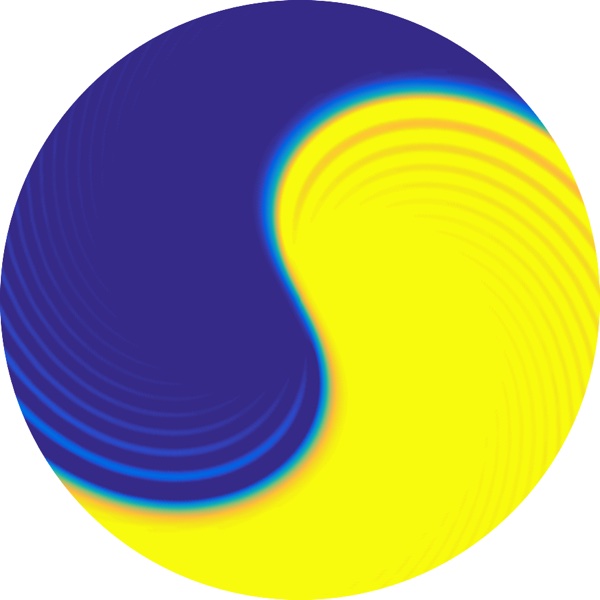}
    \caption{$t=2$}
  \end{subfigure}
  \begin{subfigure}[b]{0.16\textwidth}
    \includegraphics[width=\textwidth]{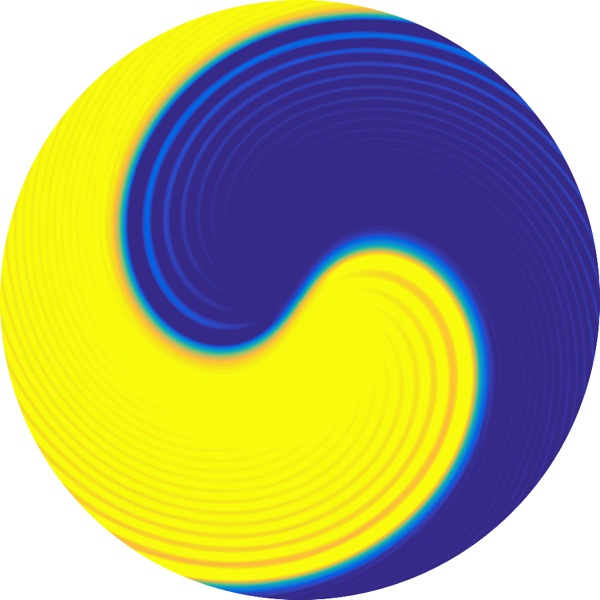}
    \caption{$t=4$}
  \end{subfigure}
  \begin{subfigure}[b]{0.16\textwidth}
    \includegraphics[width=\textwidth]{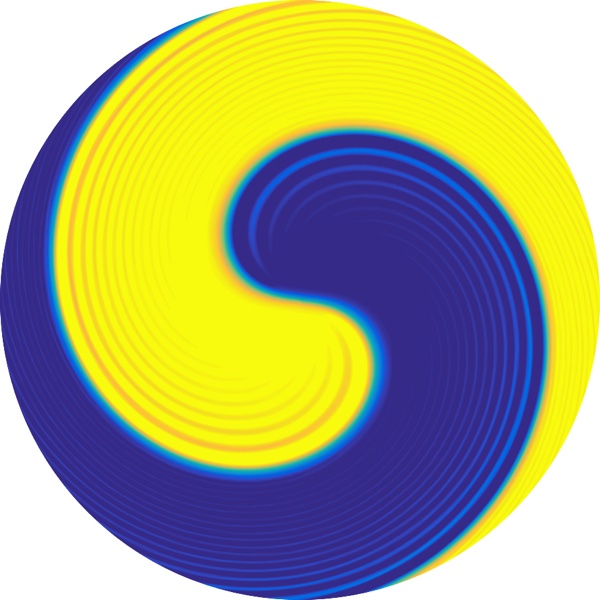}
    \caption{$t=6$}
  \end{subfigure}
  \begin{subfigure}[b]{0.16\textwidth}
    \includegraphics[width=\textwidth]{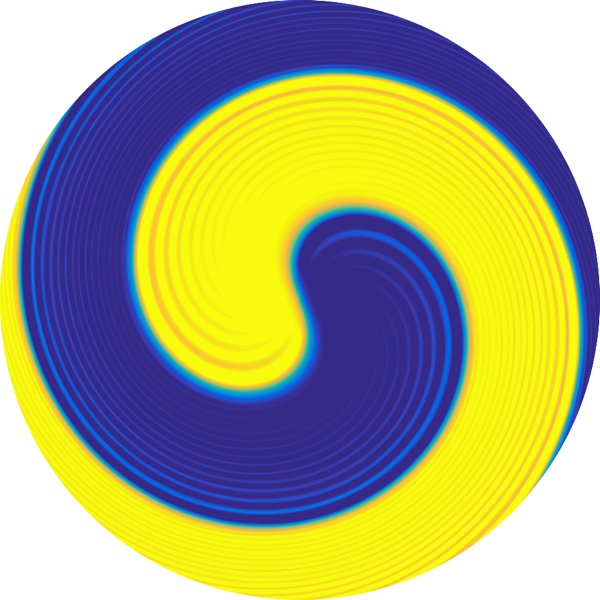}
    \caption{$t=8$}
  \end{subfigure}
  \begin{subfigure}[b]{0.16\textwidth}
    \includegraphics[width=\textwidth]{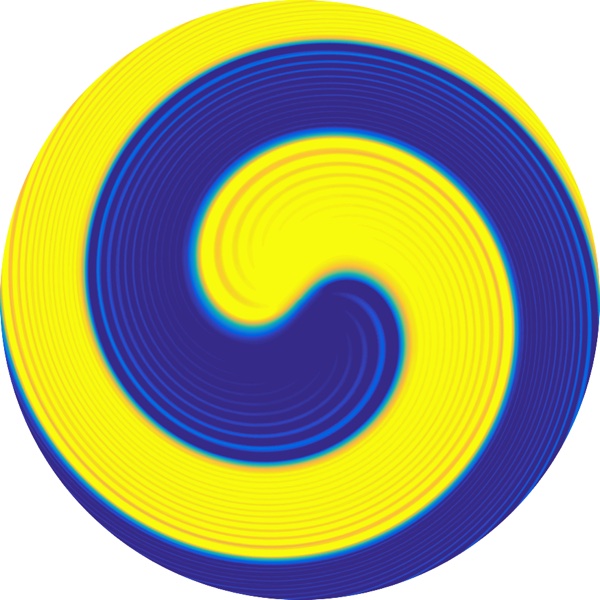}
    \caption{$t=10$}
  \end{subfigure}
  \caption{Evolution of $\theta_h$ under Doswell frontogenesis for $t\in[0,10]$.}
  \label{fig:theta_Doswell}
\end{figure}

\begin{figure}[ht]
  \centering
  \begin{subfigure}[b]{0.475\textwidth}
    \centering
    \includegraphics[height=3cm]{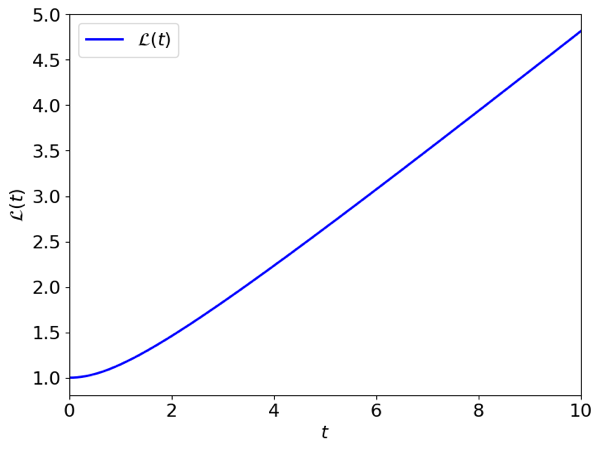}
    \caption{Interface length $\mathcal{L}(t)$ (linear growth).}
    \label{fig:Doswell_length}
  \end{subfigure}
  \begin{subfigure}[b]{0.49\textwidth}
    \centering
    \includegraphics[height=3cm]{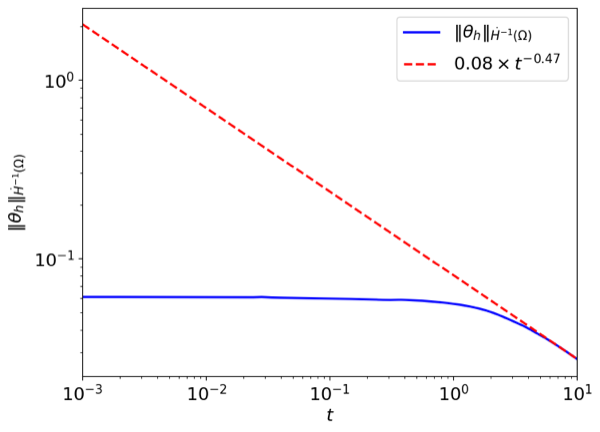}
    \caption{$\dot{H}^{-1}$ mix-norm (polynomial decay).}
    \label{fig:Doswell_mixnorm}
  \end{subfigure}
    \caption{Interface growth and mix-norm decay under the Doswell velocity field.}
  \label{fig:Doswell_mixnorm-length}
\end{figure}

\subsection{Optimized cellular-flow mixers}\label{sec:optimal_flows}
Having established the limitations of stationary mixing, we now apply the adjoint-based optimization of Algorithm~\ref{alg:optimize} to design time-dependent stirring strategies. We restrict the stream function to $N=2$ modes with basis functions $h_k(x_1,x_2)=\sin(k\pi x_1)\sin(k\pi x_2)$, $k\in\{1,2\}$. The optimization is performed on $\Omega=(0,1)^2$ over the time horizon $t\in[0,1]$ with $M=1000$ time steps and $N_p=100{,}000$ markers. To evaluate the $\dot{H}^{-1}$ mix-norm, we use a finite-volume Eulerian transport solve with $\Delta x=1/1000$; the same resolution is used in all subsequent experiments.

\subsubsection{Constant initial guess}
We first initialize the controls as
\begin{equation}
\label{eq:control_cellular_1}
    u_1(t)=1,\quad u_2(t)=1, \quad t\in[0,1].
\end{equation}

Figure~\ref{fig:interface_Cellular-1} shows the geometric stretching of the interface under the optimized flow. Figure~\ref{fig:Cellular_1_mixnorm-length} shows that the optimized velocity field produces exponential growth of the interface length at rate $5.74$, accompanied by an exponential decay of the $\dot{H}^{-1}$ mix-norm at rate $2.59$, in contrast to the polynomial decay observed in the stationary benchmarks. Figure~\ref{fig:theta_Cellular-1} shows the corresponding Eulerian scalar field, and the optimized controls $u_1(t)$ and $u_2(t)$ are displayed in Figure~\ref{fig:Cellular_1_u1u2}. At convergence the control attains $\|u^*\|_{L^2}/R_u = 0.16$, so the projection ball is inactive.

\begin{figure}[H]
  \centering
  \begin{subfigure}[b]{0.16\textwidth}
    \includegraphics[width=\textwidth]{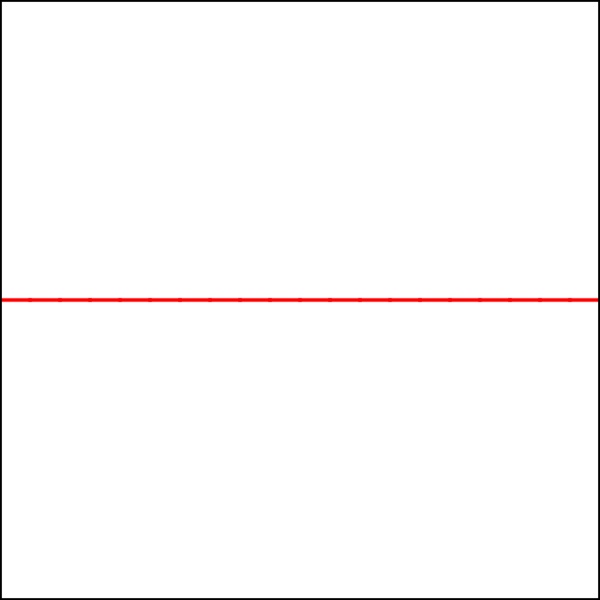}
    \caption{$t=0$}
  \end{subfigure}
  \begin{subfigure}[b]{0.16\textwidth}
    \includegraphics[width=\textwidth]{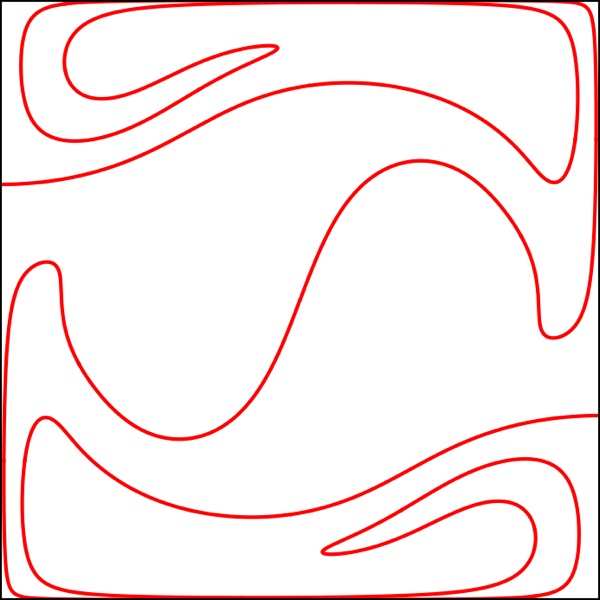}
    \caption{$t=0.2$}
  \end{subfigure}
  \begin{subfigure}[b]{0.16\textwidth}
    \includegraphics[width=\textwidth]{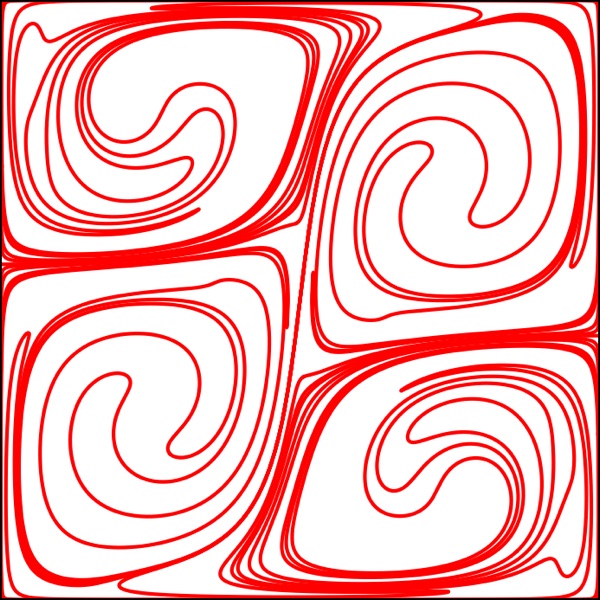}
    \caption{$t=0.4$}
  \end{subfigure}
  \begin{subfigure}[b]{0.16\textwidth}
    \includegraphics[width=\textwidth]{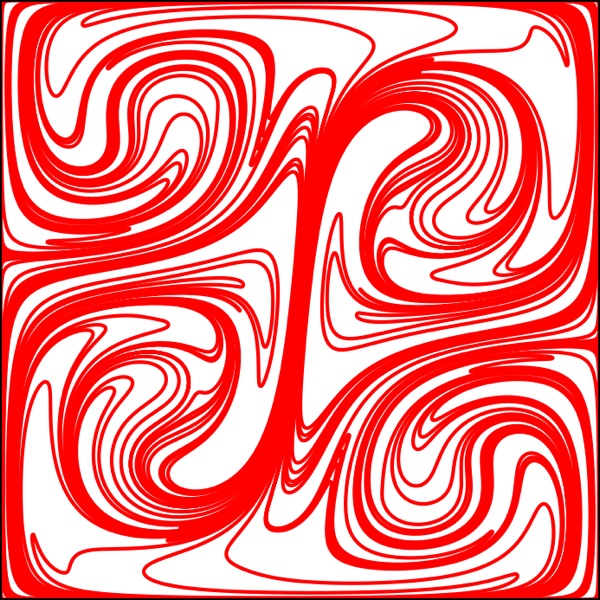}
    \caption{$t=0.6$}
  \end{subfigure}
  \begin{subfigure}[b]{0.16\textwidth}
    \includegraphics[width=\textwidth]{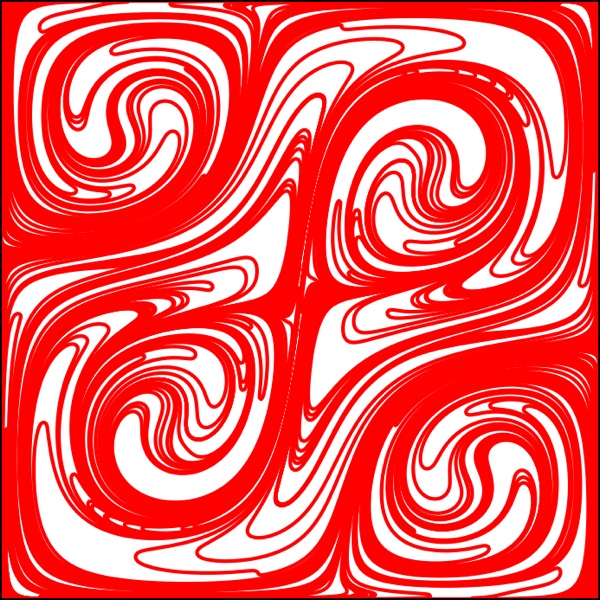}
    \caption{$t=0.8$}
  \end{subfigure}
  \begin{subfigure}[b]{0.16\textwidth}
    \includegraphics[width=\textwidth]{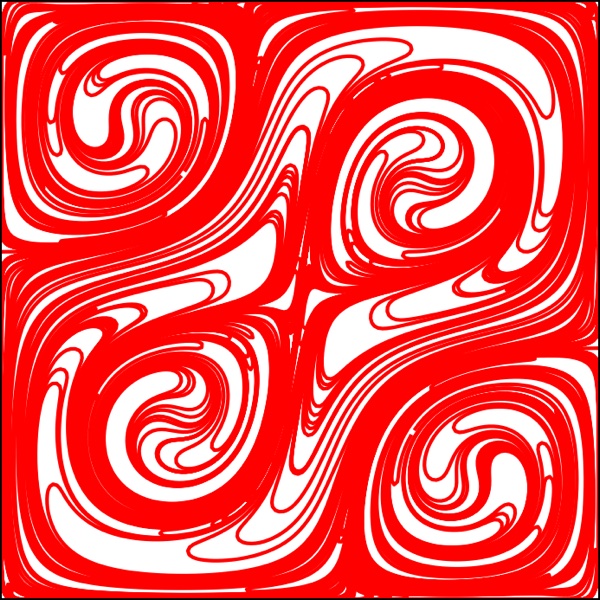}
    \caption{$t=1.0$}
  \end{subfigure}
  \caption{Interface evolution under the optimized cellular flow (initial guess \eqref{eq:control_cellular_1}).}
  \label{fig:interface_Cellular-1}
\end{figure}

\begin{figure}[H]
  \centering
  \begin{subfigure}[b]{0.16\textwidth}
    \includegraphics[width=\textwidth]{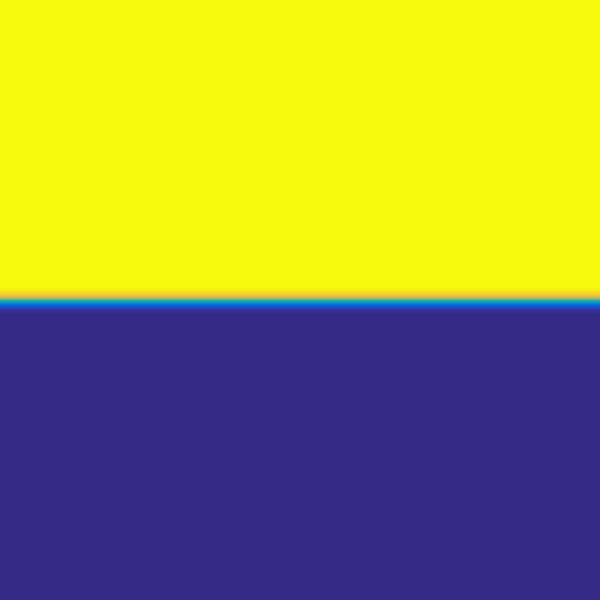}
    \caption{$t=0$}
  \end{subfigure}
  \begin{subfigure}[b]{0.16\textwidth}
    \includegraphics[width=\textwidth]{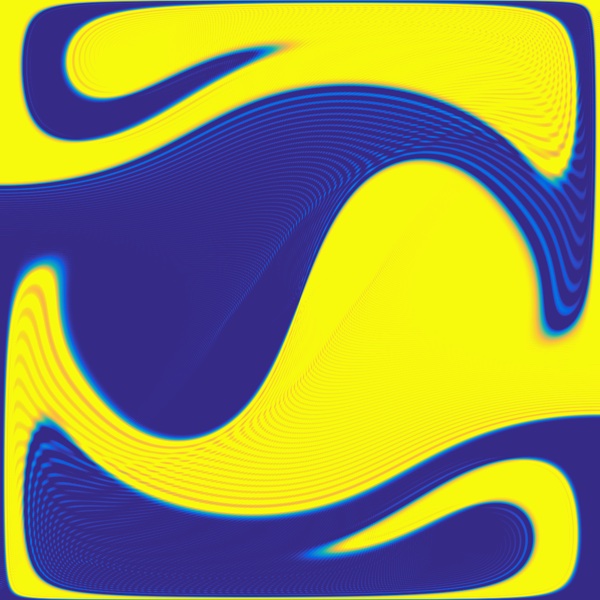}
    \caption{$t=0.2$}
  \end{subfigure}
  \begin{subfigure}[b]{0.16\textwidth}
    \includegraphics[width=\textwidth]{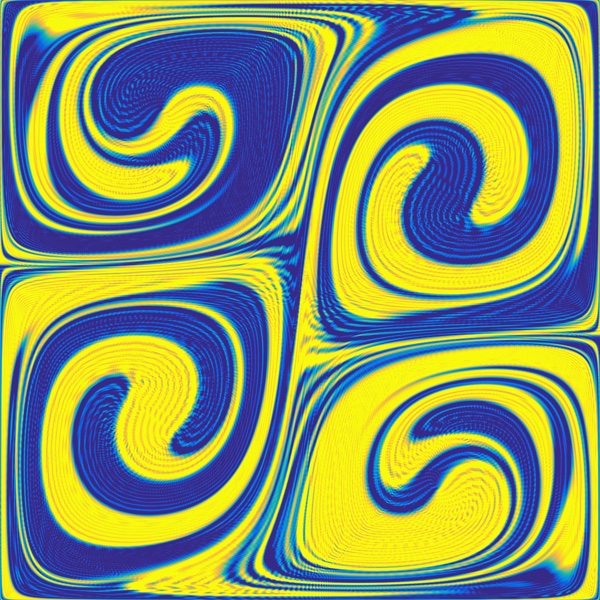}
    \caption{$t=0.4$}
  \end{subfigure}
  \begin{subfigure}[b]{0.16\textwidth}
    \includegraphics[width=\textwidth]{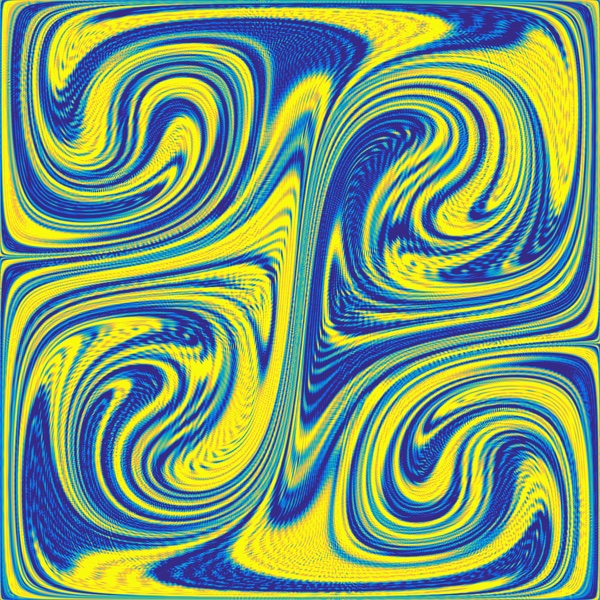}
    \caption{$t=0.6$}
  \end{subfigure}
  \begin{subfigure}[b]{0.16\textwidth}
    \includegraphics[width=\textwidth]{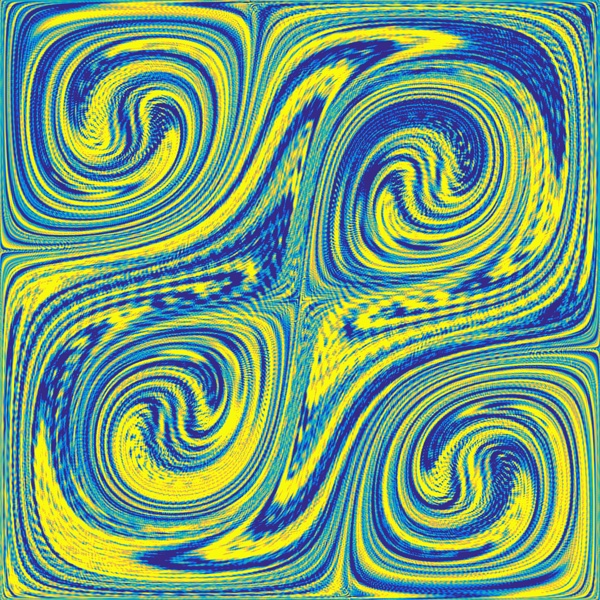}
    \caption{$t=0.8$}
  \end{subfigure}
  \begin{subfigure}[b]{0.16\textwidth}
    \includegraphics[width=\textwidth]{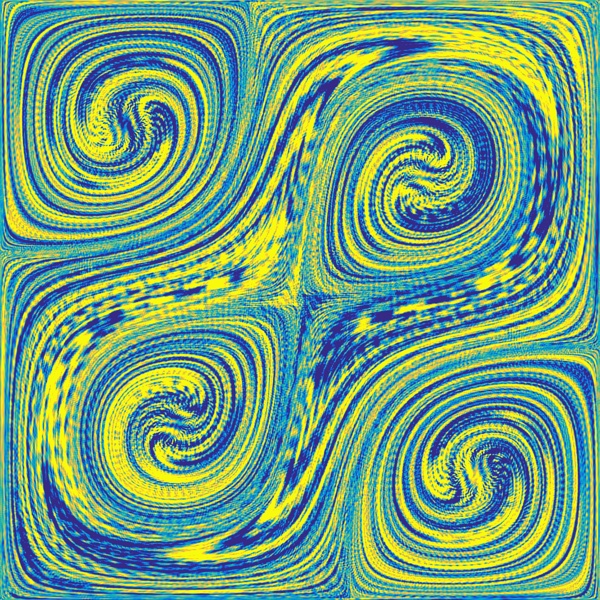}
    \caption{$t=1.0$}
  \end{subfigure}
  \caption{Scalar field $\theta_h$ under the optimized cellular flow (initial guess \eqref{eq:control_cellular_1}).}
  \label{fig:theta_Cellular-1}
\end{figure}

\begin{figure}[H]
  \centering
  \begin{subfigure}[b]{0.32\textwidth}
    \includegraphics[width=\textwidth]{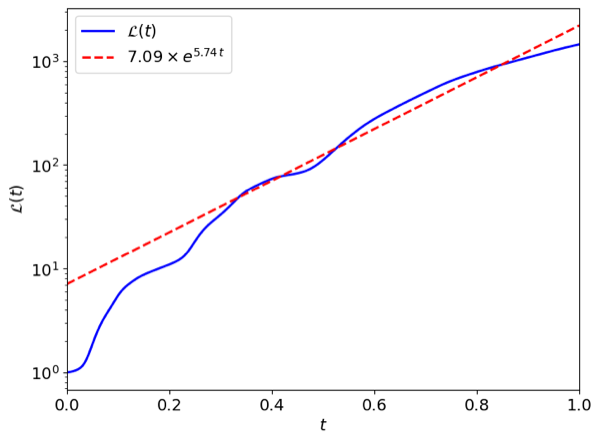}
    \caption{Interface length $\mathcal{L}(t)$.}
    \label{fig:Cellular_1_length}
  \end{subfigure}
  \begin{subfigure}[b]{0.32\textwidth}
    \includegraphics[width=\textwidth]{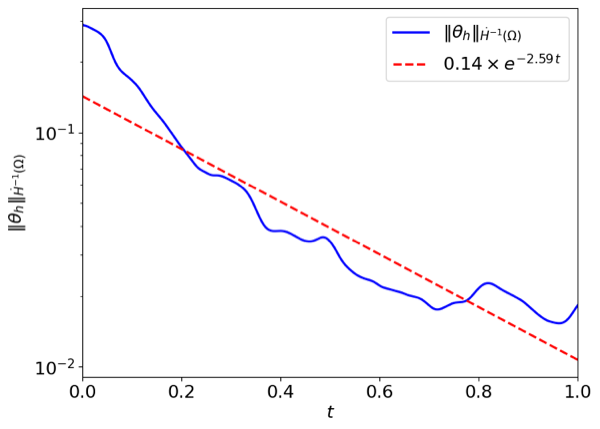}
    \caption{$\dot{H}^{-1}$ mix-norm.}
    \label{fig:Cellular_1_mixnorm}
  \end{subfigure}
  \begin{subfigure}[b]{0.32\textwidth}
    \includegraphics[width=\textwidth]{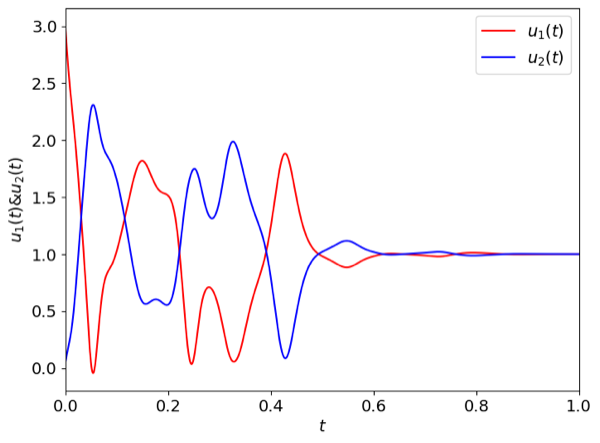}
    \caption{Optimal $u_1(t),u_2(t)$.}
    \label{fig:Cellular_1_u1u2}
  \end{subfigure}
    \caption{Quantitative results under the optimized cellular flow (initial guess \eqref{eq:control_cellular_1}).}
  \label{fig:Cellular_1_mixnorm-length}
\end{figure}

\subsubsection{Oscillatory initial guess}
To test robustness with respect to the initial guess, we perform a second optimization with the oscillatory initialization
\begin{equation}
\label{eq:control_cellular_2}
    u_1(t)=\cos(\pi t/2),\quad u_2(t)=\sin(\pi t/2), \quad t\in[0,1].
\end{equation}

Despite the different starting point, the optimized interface and scalar field evolution are qualitatively similar to the constant-initial-guess case (Figures~\ref{fig:interface_Cellular-1}--\ref{fig:theta_Cellular-1}). The interface grows exponentially at rate $5.12$ and the mix-norm decays exponentially at rate $2.28$ (Figure~\ref{fig:Cellular_2_mixnorm-length}), confirming that the algorithm is robust with respect to initialization. The terminal ratio is $\|u^*\|_{L^2}/R_u = 0.15$ (projection inactive).

\begin{figure}[ht]
  \centering
  \begin{subfigure}[b]{0.32\textwidth}
    \includegraphics[width=\textwidth]{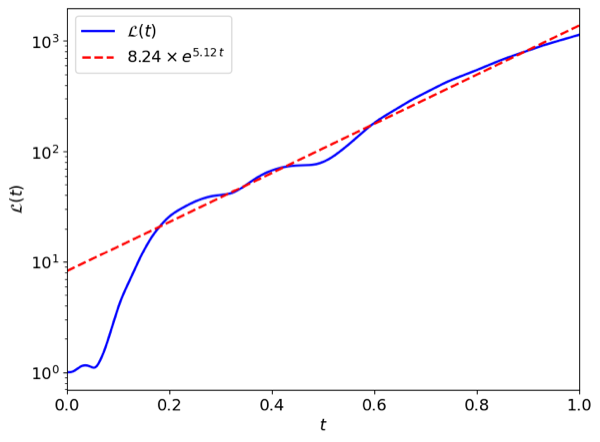}
    \caption{Interface length $\mathcal{L}(t)$.}
    \label{fig:Cellular_2_length}
  \end{subfigure}
  \begin{subfigure}[b]{0.34\textwidth}
    \includegraphics[width=\textwidth]{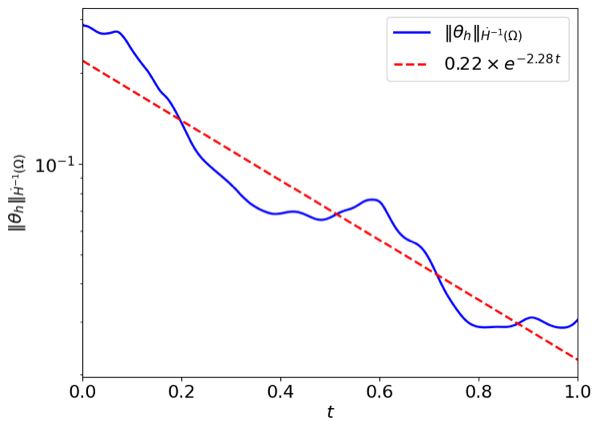}
    \caption{$\dot{H}^{-1}$ mix-norm.}
    \label{fig:Cellular_2_mixnorm}
  \end{subfigure}
  \begin{subfigure}[b]{0.32\textwidth}
    \includegraphics[width=\textwidth]{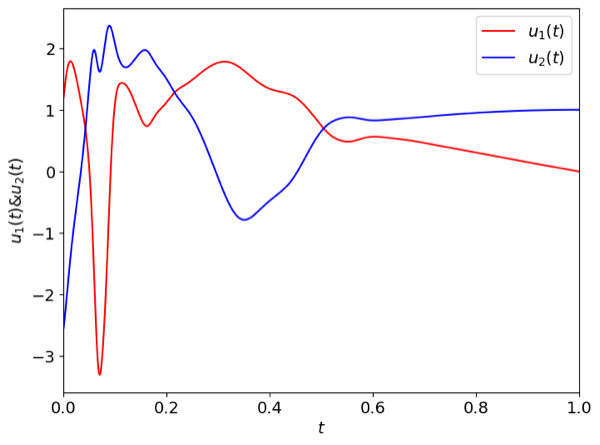}
    \caption{Optimal $u_1(t),u_2(t)$.}
    \label{fig:Cellular_2_u1u2}
  \end{subfigure}
    \caption{Quantitative results under the optimized cellular flow (initial guess \eqref{eq:control_cellular_2}).}
  \label{fig:Cellular_2_mixnorm-length}
\end{figure}

\subsection{Optimized Doswell-type mixers}\label{sec:optimal_Doswell}
We next consider a more complex flow configuration on a circular domain to demonstrate the flexibility of our framework beyond the Cartesian setting. The domain is the same disc $\Omega = \{ \mathbf{x} \in \mathbb{R}^2 : |\mathbf{x} - \mathbf{x}_c| < R \}$ with center $\mathbf{x}_c = (0.5, 0.5)$ and radius $R=0.5$ as in the stationary benchmark of Section~\ref{sec:Doswell-stable}. The optimization is performed over $t\in[0,5]$ with $M=5000$ time steps and $N_p=100{,}000$ markers. For the Eulerian transport solve used in mix-norm evaluation, we use a polar grid with $N_r=1000$ radial cells and $N_\varphi=1000$ angular cells.

The velocity field is parameterized by $N=2$ basis flows. The first, $\mathbf{b}_1$, is the standard Doswell frontogenesis field \eqref{eq:v_Doswell} centered at $\mathbf{x}_c$. The second, $\mathbf{b}_2$, is a five-cell (multi-vortex) Doswell-type field formed by superposing five Doswell vortices supported on five embedded discs within $\Omega$: four congruent discs arranged around the periphery and one centered at $\mathbf{x}_c$. To enforce no-penetration at each disc boundary and to obtain $C^1$ spatial regularity when the field is extended by zero outside the disc, each vortex is multiplied by the radial cutoff
\[
C(r)=\bigl(1-(r/R_c)^2\bigr)^3 \quad (0\le r<R_c),
\qquad C(r)=0 \quad (r\ge R_c),
\]
where $R_c$ denotes the disc radius and $r$ is the distance to the corresponding vortex center. This choice satisfies $C(R_c)=0$ and $C'(R_c)=0$. The resulting field $\mathbf{b}_2$ is incompressible in $\Omega$ and satisfies $\mathbf{b}_2\cdot\mathbf{n}=0$ on $\partial\Omega$. The cubic cutoff only guarantees $C^1$ regularity across $\{r=R_c\}$, so the associated stream function is not in $C^\infty(\overline\Omega)$ and the basis $\{\mathbf{b}_1,\mathbf{b}_2\}$ falls outside the smoothness hypothesis of Theorem~\ref{prop:H3}. The experiments below should therefore be understood as a numerical demonstration on a physically motivated basis, rather than as an illustration of the existence theorem.

\subsubsection{Constant initial guess}
We initialize the controls as in \eqref{eq:control_cellular_1}. Figures~\ref{fig:interface_Doswell-1} and \ref{fig:theta_Doswell-1} display the optimized interface and scalar field evolution, respectively. Despite the more complex flow geometry, the optimizer again produces chaotic advection: the interface length grows exponentially at rate $0.92$ (Figure~\ref{fig:Doswell_1_length}), driving an exponential decay of the mix-norm at rate $0.33$ (Figure~\ref{fig:Doswell_1_mixnorm}). The optimized controls are shown in Figure~\ref{fig:Doswell_1_u1u2}. At convergence the control attains $\|u^*\|_{L^2}/R_u = 1.00$, i.e., the iterate sits on the boundary of the projection ball; this is the single experiment in which the projection step of Algorithm~\ref{alg:optimize} is active.

\begin{figure}[ht]
  \centering
  \begin{subfigure}[b]{0.16\textwidth}
    \includegraphics[width=\textwidth]{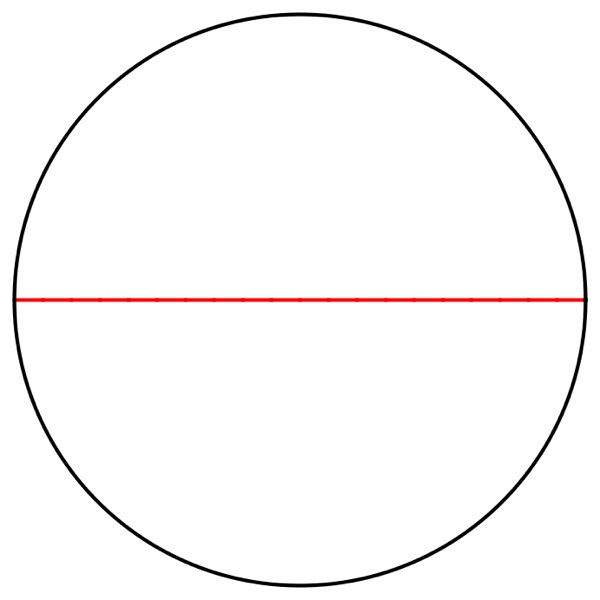}
    \caption{$t=0$}
  \end{subfigure}
  \begin{subfigure}[b]{0.16\textwidth}
    \includegraphics[width=\textwidth]{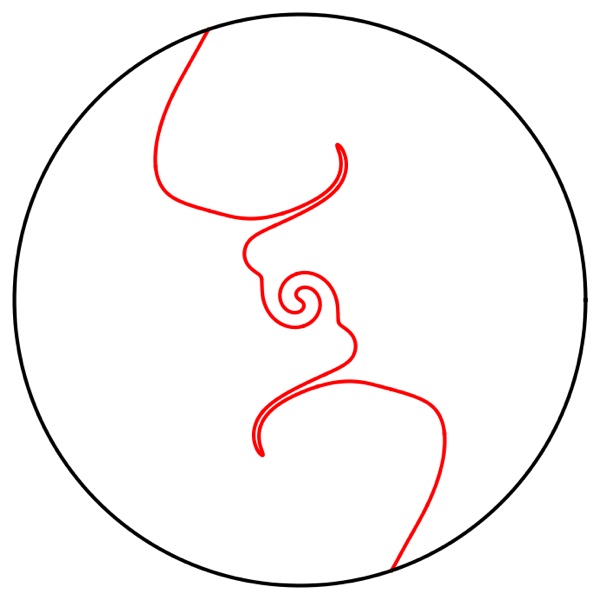}
    \caption{$t=1$}
  \end{subfigure}
  \begin{subfigure}[b]{0.16\textwidth}
    \includegraphics[width=\textwidth]{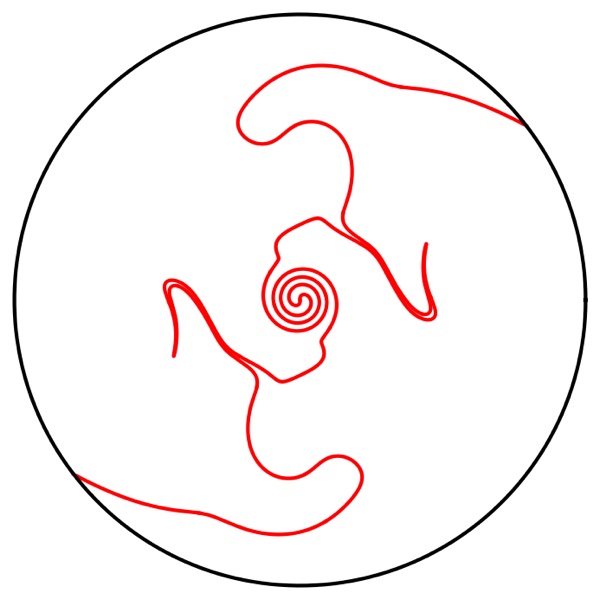}
    \caption{$t=2$}
  \end{subfigure}
  \begin{subfigure}[b]{0.16\textwidth}
    \includegraphics[width=\textwidth]{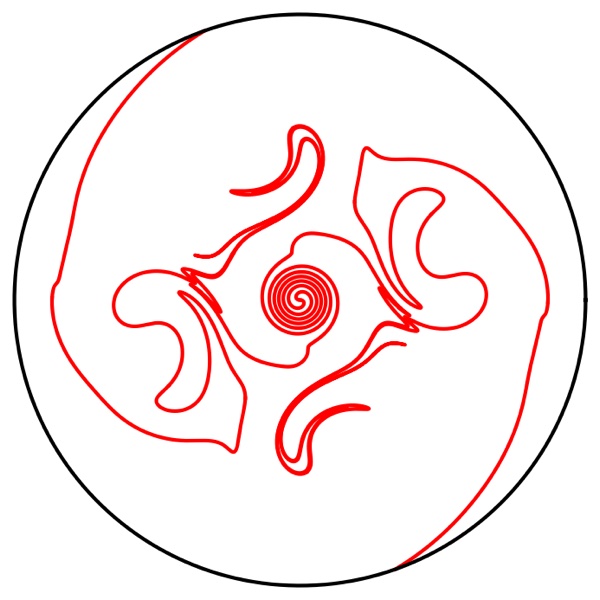}
    \caption{$t=3$}
  \end{subfigure}
  \begin{subfigure}[b]{0.16\textwidth}
    \includegraphics[width=\textwidth]{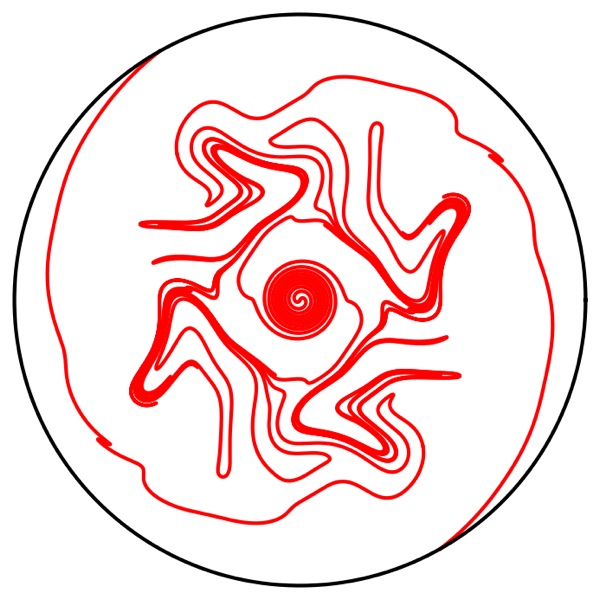}
    \caption{$t=4$}
  \end{subfigure}
  \begin{subfigure}[b]{0.16\textwidth}
    \includegraphics[width=\textwidth]{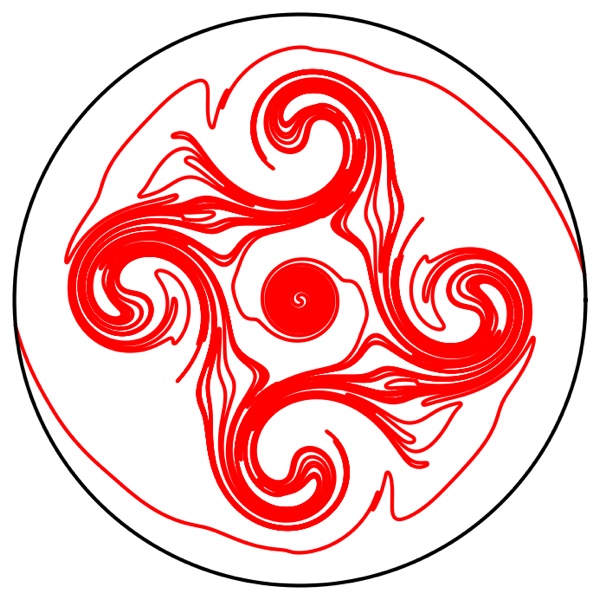}
    \caption{$t=5$}
  \end{subfigure}
  \caption{Interface evolution under the optimized Doswell flow (initial guess \eqref{eq:control_cellular_1}).}
  \label{fig:interface_Doswell-1}
\end{figure}

\begin{figure}[ht]
  \centering
  \begin{subfigure}[b]{0.16\textwidth}
    \includegraphics[width=\textwidth]{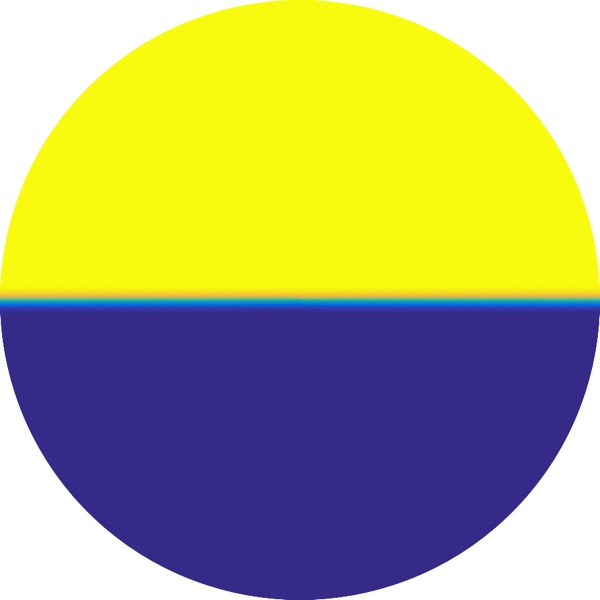}
    \caption{$t=0$}
  \end{subfigure}
  \begin{subfigure}[b]{0.16\textwidth}
    \includegraphics[width=\textwidth]{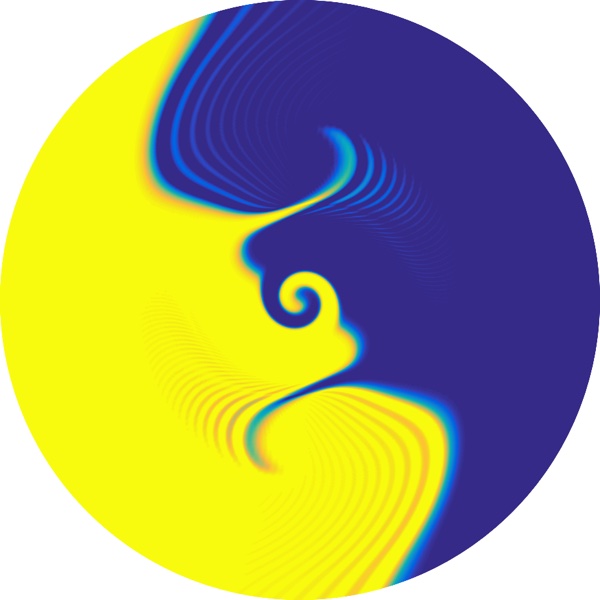}
    \caption{$t=1$}
  \end{subfigure}
  \begin{subfigure}[b]{0.16\textwidth}
    \includegraphics[width=\textwidth]{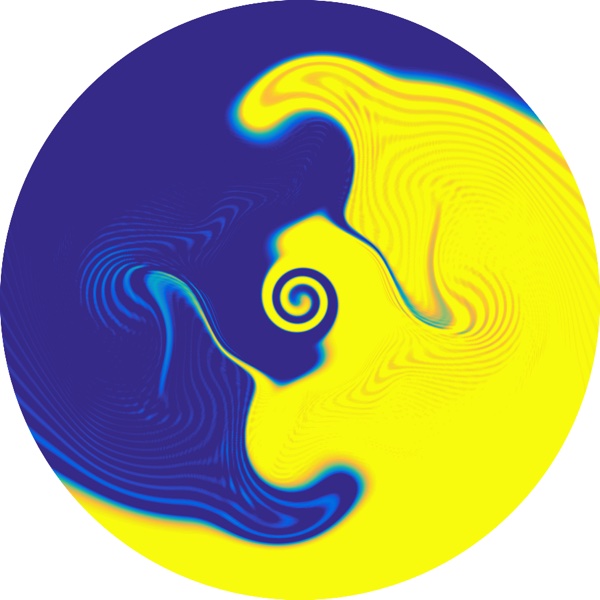}
    \caption{$t=2$}
  \end{subfigure}
  \begin{subfigure}[b]{0.16\textwidth}
    \includegraphics[width=\textwidth]{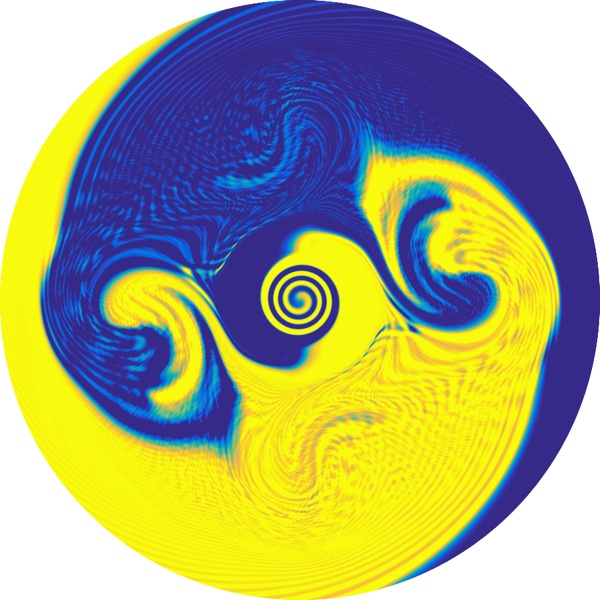}
    \caption{$t=3$}
  \end{subfigure}
  \begin{subfigure}[b]{0.16\textwidth}
    \includegraphics[width=\textwidth]{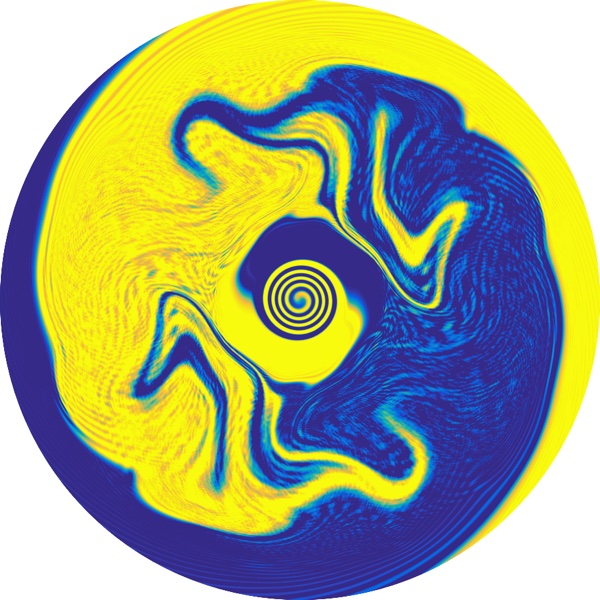}
    \caption{$t=4$}
  \end{subfigure}
  \begin{subfigure}[b]{0.16\textwidth}
    \includegraphics[width=\textwidth]{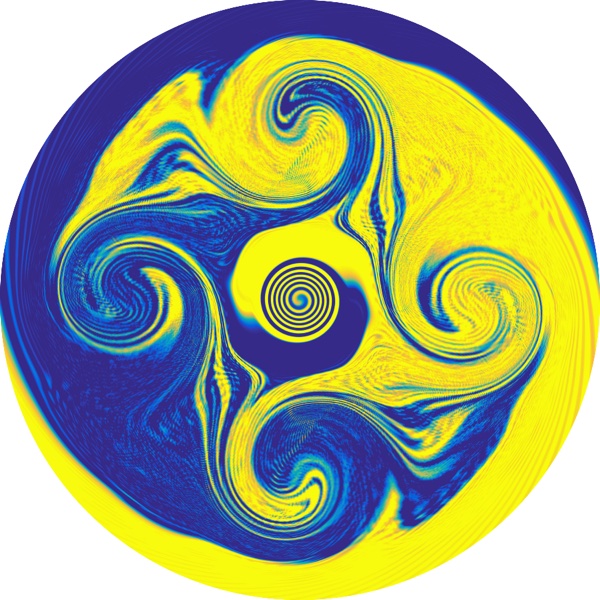}
    \caption{$t=5$}
  \end{subfigure}
  \caption{Scalar field $\theta_h$ under the optimized Doswell flow (initial guess \eqref{eq:control_cellular_1}).}
  \label{fig:theta_Doswell-1}
\end{figure}

\begin{figure}[ht]
  \centering
  \begin{subfigure}[b]{0.32\textwidth}
    \includegraphics[width=\textwidth]{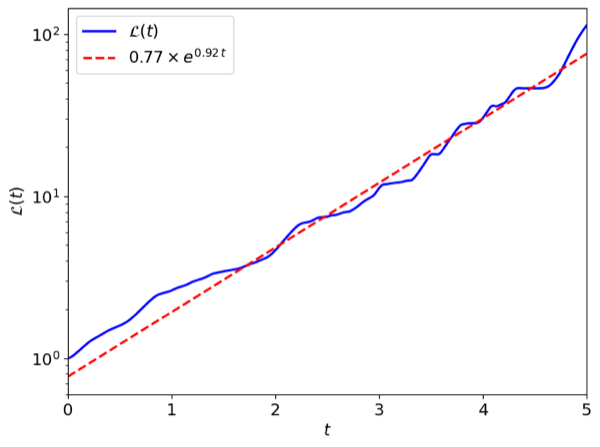}
    \caption{Interface length $\mathcal{L}(t)$.}
    \label{fig:Doswell_1_length}
  \end{subfigure}
  \begin{subfigure}[b]{0.34\textwidth}
    \includegraphics[width=\textwidth]{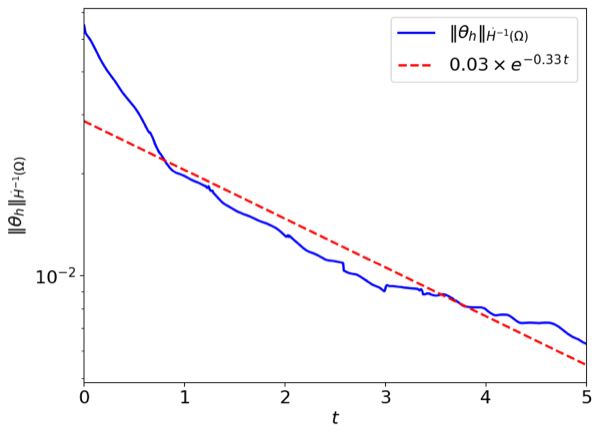}
    \caption{$\dot{H}^{-1}$ mix-norm.}
    \label{fig:Doswell_1_mixnorm}
  \end{subfigure}
  \begin{subfigure}[b]{0.32\textwidth}
    \includegraphics[width=\textwidth]{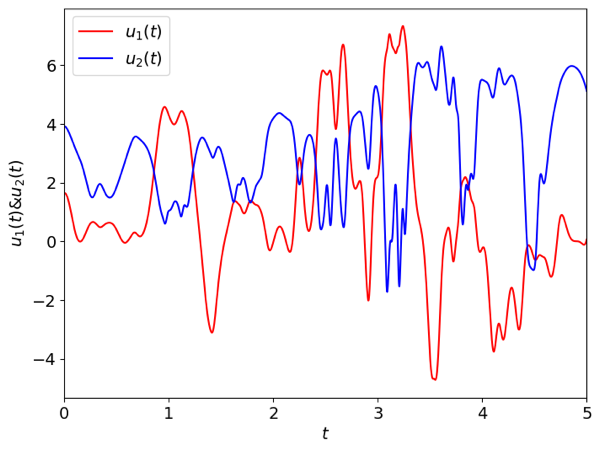}
    \caption{Optimal $u_1(t),u_2(t)$.}
    \label{fig:Doswell_1_u1u2}
  \end{subfigure}
    \caption{Quantitative results under optimized Doswell flow (initial guess \eqref{eq:control_cellular_1}).}
  \label{fig:Doswell_1_mixnorm-length}
\end{figure}

\subsubsection{Oscillatory initial guess}
Starting from the oscillatory initialization \eqref{eq:control_cellular_2}, the optimizer converges to a qualitatively similar solution, with interface and scalar field evolution close to the constant-initial-guess case (Figures~\ref{fig:interface_Doswell-1}--\ref{fig:theta_Doswell-1}). The interface length grows exponentially at rate $0.70$ and the mix-norm decays at rate $0.41$ (Figure~\ref{fig:Doswell_2_mixnorm-length}), again showing robustness with respect to initialization. The terminal ratio is $\|u^*\|_{L^2}/R_u = 0.70$ (projection inactive, but closer to saturation than the cellular cases).

\begin{figure}[ht]
  \centering
  \begin{subfigure}[b]{0.32\textwidth}
    \includegraphics[width=\textwidth]{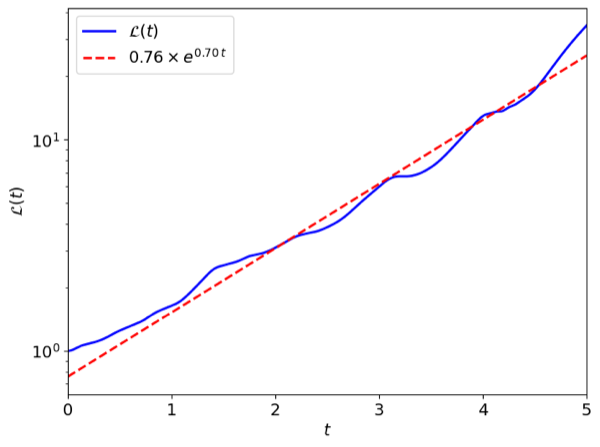}
    \caption{Interface length $\mathcal{L}(t)$.}
    \label{fig:Doswell_2_length}
  \end{subfigure}
  \begin{subfigure}[b]{0.34\textwidth}
    \includegraphics[width=\textwidth]{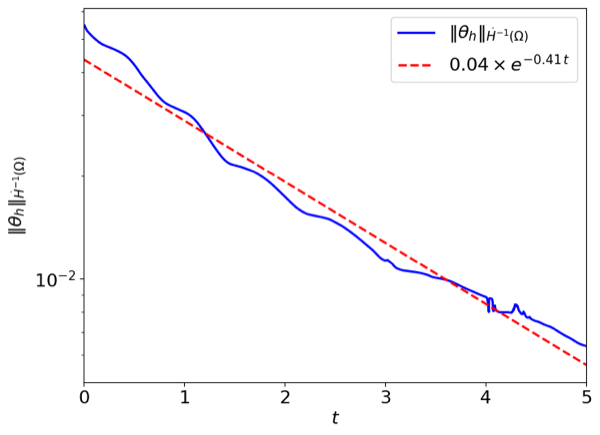}
    \caption{$\dot{H}^{-1}$ mix-norm.}
    \label{fig:Doswell_2_mixnorm}
  \end{subfigure}
  \begin{subfigure}[b]{0.32\textwidth}
    \includegraphics[width=\textwidth]{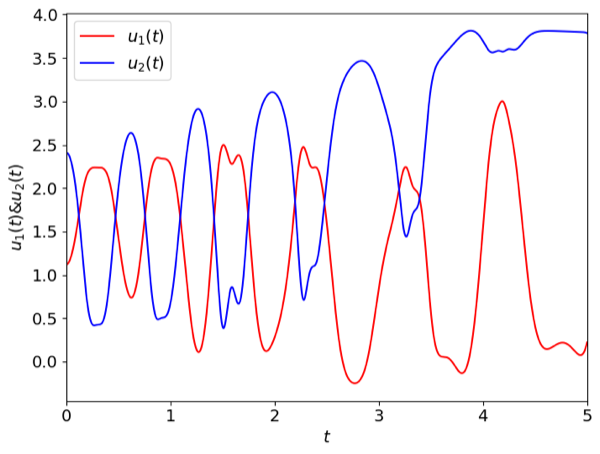}
    \caption{Optimal $u_1(t),u_2(t)$.}
    \label{fig:Doswell_2_u1u2}
  \end{subfigure}
    \caption{Quantitative results under optimized Doswell flow (initial guess \eqref{eq:control_cellular_2}).}
  \label{fig:Doswell_2_mixnorm-length}
\end{figure}

Across the four optimization experiments, the algorithm transforms the polynomial mixing of the stationary benchmarks into rates well fitted by exponentials over the simulated horizon~\cite{zheng2023numerical, hu2026structure}. The Doswell-optimized cases yield lower $\dot{H}^{-1}$ rates ($0.33$, $0.41$) than the cellular ones ($2.59$, $2.28$) despite producing long, geometrically complex spirals: the Doswell mechanism coils the interface within the disc's coherent circular geometry, so length grows quickly while the global $\dot{H}^{-1}$ cancellations improve only moderately. This decoupling, consistent with Remark~\ref{rem:critical_mixnorm}, reinforces that all optimized controls must ultimately be assessed on the full transport equation.

\subsection{Comparison with the Sobolev-norm optimizer}\label{sec:comparison}

As discussions in Section~\ref{sec:problem}, the Hamiltonian interface model serves as a reduced-order surrogate for the original transport equation-constrained mixing problem. The natural validation of any such reduced model is to compare the performance of the control it produces against that of the control obtained by direct optimization on the full system, both evaluated on the same reference dynamics. We now carry out this comparison, using the Eulerian Sobolev-norm optimizer of \cite{hu2026structure} as the reference. That method minimizes $\frac{1}{2}\|\theta(T)\|_{\dot{H}^{-1}(\Omega)}^2+\frac{\gamma}{2}\int_0^T\|\mathbf{v}(t)\|^2\,dt$ via a structure-preserving finite-volume and Crank--Nicolson discretization.

The Sobolev-norm optimizer requires solving the full two-dimensional transport equation on an $N_{\mathrm{cell}}\times N_{\mathrm{cell}}$ grid (with $N_{\mathrm{cell}}=500$ as in \cite{hu2026structure}) at each forward step, together with a corresponding adjoint PDE solve of the same dimension, and additionally evaluating the $\dot{H}^{-1}$ norm via an elliptic solve. By contrast, our interface-length optimizer tracks only $N_p=100{,}000$ marker points along the one-dimensional material curve and computes the adjoint along the same trajectories. The per-iteration cost therefore scales as $O(N_p M)$ rather than $O(N_{\mathrm{cell}}^2 M)$, where $M$ is the number of time steps.

To ensure a fair comparison, the two optimizers run on the same finite-dimensional basis $(\mathbf b_1,\mathbf b_2)$, time horizon $T$, time step $\Delta t$, initial scalar~\eqref{eq:theta_init_1}, initial interface $\Sigma(0)$, constant initial control, $L^2$ penalty weights, projection onto $\|u\|_{L^2}\le R_u$, optimizer (Polak--Ribi\`ere CG with Armijo line search) and tolerance $\varepsilon_{\mathrm{tol}}=10^{-6}$; they differ only in the objective (interface length vs.\ $\dot{H}^{-1}$) and in the corresponding forward/adjoint solver. The optimized controls are evaluated on a common reference Eulerian solver ($N_{\mathrm{cell}}=1000$; $M_t=1000$ for cellular and $M_t=5000$ for Doswell), and Table~\ref{tab:optimizer_comparison} reports the resulting $\dot{H}^{-1}$ decay rate (least-squares regression of $\log\|\theta\|_{\dot{H}^{-1}}$ vs.\ $t$ over $[0,T]$, no burn-in) together with wall-clock time on the hardware described above.

\begin{table}[ht]
\centering
\begin{tabular}{|c|c|c|c|c|}
\hline
\multirow{2}{*}{Flow setting}
& \multicolumn{2}{c|}{Sobolev-norm optimizer}
& \multicolumn{2}{c|}{Interface-length optimizer} \\
\cline{2-5}
& Time (s) & Decay rate & Time (s) & Decay rate \\ \hline
Cellular with \eqref{eq:control_cellular_1} & 10522.89 & 2.52 & 1363.90 & 2.59 \\ \hline
Cellular with \eqref{eq:control_cellular_2} & 19423.52 & 1.81 & 894.07 & 2.28 \\ \hline
Doswell with \eqref{eq:control_cellular_1} & 18324.59 & 0.30 & 6714.30 & 0.33 \\ \hline
Doswell with \eqref{eq:control_cellular_2} & 23741.28 & 0.25 & 7634.20 & 0.41 \\ \hline
\end{tabular}
\caption{Comparison of the Sobolev-norm optimizer \cite{hu2026structure} and the interface-length optimizer: computation time and $\dot{H}^{-1}$ mix-norm decay rate.}
\label{tab:optimizer_comparison}
\end{table}

The results show that the interface-length optimizer requires less computation time while achieving comparable or higher $\dot{H}^{-1}$ mix-norm decay rates than the Sobolev-norm optimizer in all four settings. For the cellular-flow experiments, the interface-length method reduces the computation time by a factor of approximately $13$--$21$; for the Doswell experiments, the reduction factor is approximately $2.7$--$2.9$. In all four tested settings, when the two optimizers are run under the matched configuration described above and their respective controls are subsequently evaluated on the same reference transport solver, the control designed on the reduced model produces faster mix-norm decay: the measured $\dot{H}^{-1}$ decay rate improves by $3\%$--$64\%$ depending on the setting.

The speedup arises primarily from the lower per-iteration cost. Each iteration of the interface-length optimizer consists of a forward ODE solve ($O(N_p M)$ operations), an adjoint ODE solve of the same cost, a gradient evaluation, and a line search. By comparison, each iteration of the Sobolev-norm optimizer requires a forward PDE solve ($O(N_{\mathrm{cell}}^2 M)$), an adjoint PDE solve of equal cost, an elliptic solve for the $\dot{H}^{-1}$ norm, and the same gradient and line-search steps. The dominant cost difference lies in the forward and adjoint solves: tracking $N_p = 10^5$ markers along ODEs is cheaper than evolving a scalar field on a $500 \times 500$ grid through a full transport PDE. In our experiments, the interface model produces controls of comparable or better quality than the PDE-based optimizer at a fraction of the cost.

\subsection{Influence of the number of basis functions}\label{sec:N4}

The preceding experiments all use $N=2$ cellular-flow modes. A natural question is whether enlarging the control space to $N=4$ modes, $h_k(x_1,x_2)=\sin(k\pi x_1)\sin(k\pi x_2)$ for $k\in\{1,2,3,4\}$, yields improved mixing. We perform the optimization on $\Omega=(0,1)^2$ with $t\in[0,1]$, $M=1000$ time steps, $N_p=500{,}000$ markers, $\gamma_k=10^{-5}$ for all $k$, and constant initial guess $u_1=u_2=1$, $u_3=u_4=0$. The terminal ratio is $\|u^*\|_{L^2}/R_u = 0.15$ (projection inactive).

Figure~\ref{fig:interface_Cellular-N4} shows the interface evolution. The flow generates strong stretching and filamentation, with patterns visibly more complex than the $N=2$ optimum (Figure~\ref{fig:interface_Cellular-1}). The Eulerian scalar field in Figure~\ref{fig:theta_Cellular-N4} exhibits a four-cell vortex structure. The quantitative results (Figure~\ref{fig:Cellular_N4_mixnorm-length}) decompose into three observations. First, the optimizer is successful at the level of its own criterion: the enlarged control space yields a strictly deeper minimum of the discrete cost $J_h$ than the $N=2$ run, and the interface length at $T=1$ reaches approximately $23{,}000$, about $16$ times larger than the $N=2$ value of $1{,}450$, consistent with the exponential upper bound \eqref{eq:length_u_bound}. Second, this geometric gain does not transfer proportionally to the true mixing objective: the $\dot{H}^{-1}$ mix-norm decay rate measured on the reference transport solve is only $1.58$, compared to $2.59$ for $N=2$. Third, the gap between these two observations is the methodological content of the experiment: by Proposition~\ref{prop:zero_contour} and Remark~\ref{rem:critical_mixnorm}, $\mathcal L$ is a Lagrangian/BV-type indicator on $\Gamma_t$ while $\dot{H}^{-1}$ is a global Eulerian one, and the two can genuinely separate when the optimizer drives the interface into fine-scale structures localized within coherent regions.

\begin{figure}[ht]
  \centering
  \begin{subfigure}[b]{0.16\textwidth}
    \includegraphics[width=\textwidth]{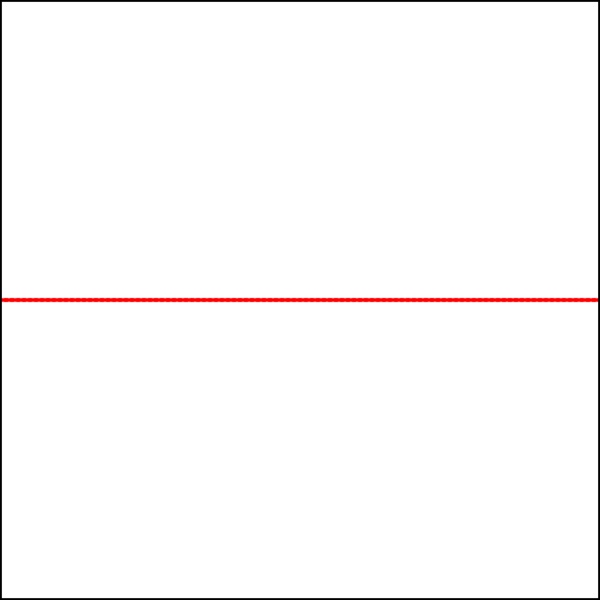}
    \caption{$t=0$}
  \end{subfigure}
  \begin{subfigure}[b]{0.16\textwidth}
    \includegraphics[width=\textwidth]{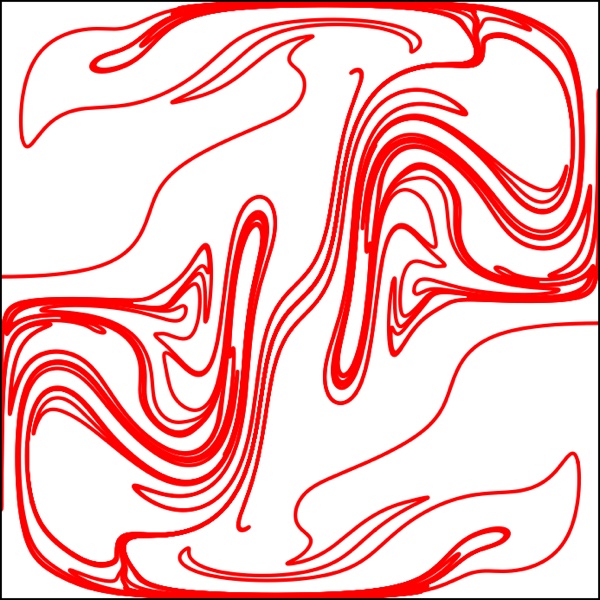}
    \caption{$t=0.2$}
  \end{subfigure}
  \begin{subfigure}[b]{0.16\textwidth}
    \includegraphics[width=\textwidth]{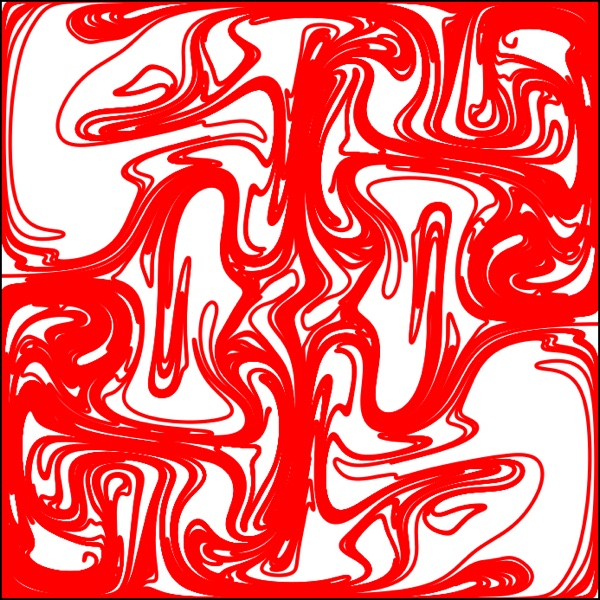}
    \caption{$t=0.4$}
  \end{subfigure}
  \begin{subfigure}[b]{0.16\textwidth}
    \includegraphics[width=\textwidth]{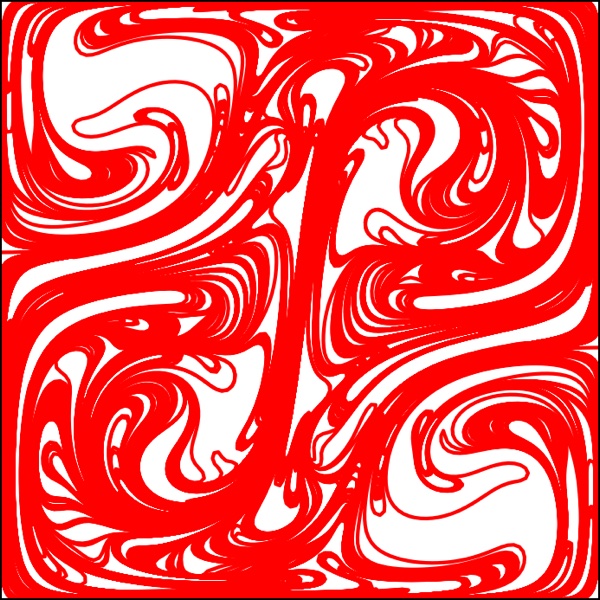}
    \caption{$t=0.6$}
  \end{subfigure}
  \begin{subfigure}[b]{0.16\textwidth}
    \includegraphics[width=\textwidth]{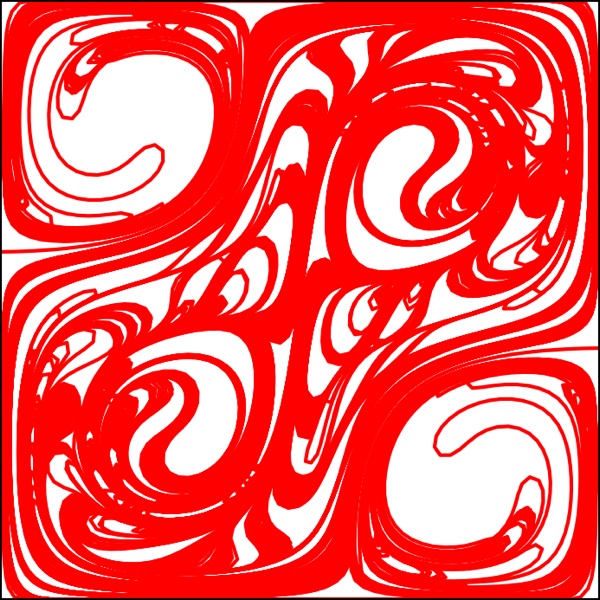}
    \caption{$t=0.8$}
  \end{subfigure}
  \begin{subfigure}[b]{0.16\textwidth}
    \includegraphics[width=\textwidth]{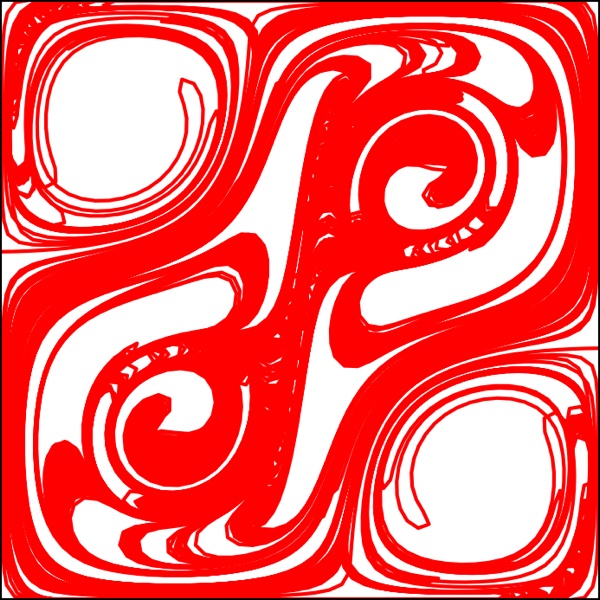}
    \caption{$t=1.0$}
  \end{subfigure}
  \caption{Interface evolution under the optimized $N=4$ cellular flow.}
  \label{fig:interface_Cellular-N4}
\end{figure}

\begin{figure}[ht]
  \centering
  \begin{subfigure}[b]{0.16\textwidth}
    \includegraphics[width=\textwidth]{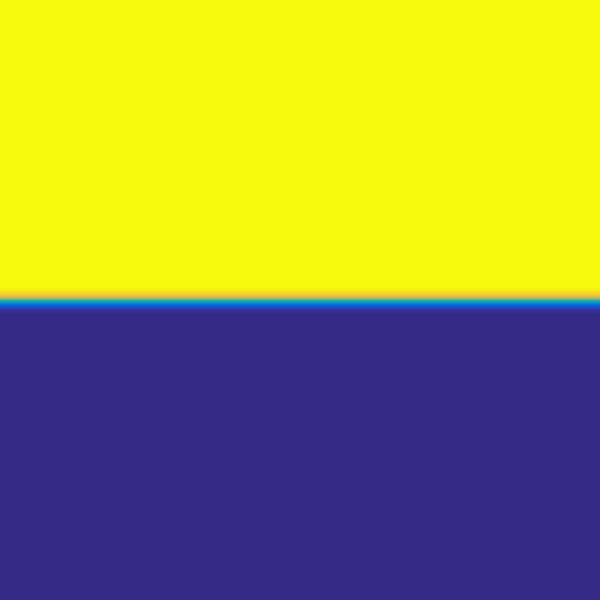}
    \caption{$t=0$}
  \end{subfigure}
  \begin{subfigure}[b]{0.16\textwidth}
    \includegraphics[width=\textwidth]{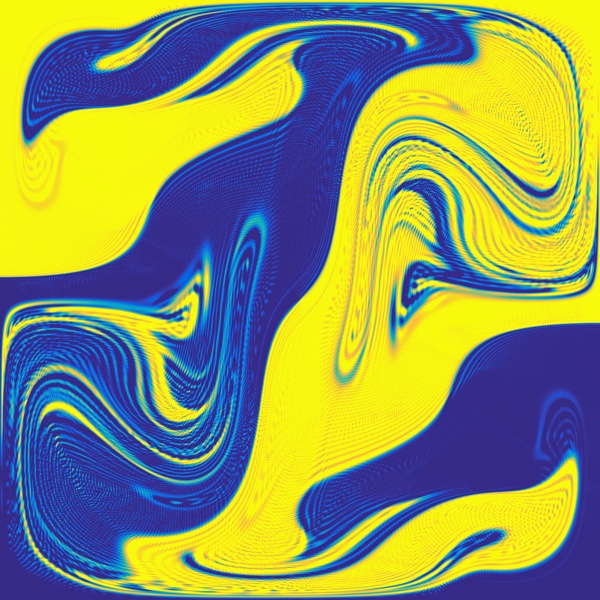}
    \caption{$t=0.2$}
  \end{subfigure}
  \begin{subfigure}[b]{0.16\textwidth}
    \includegraphics[width=\textwidth]{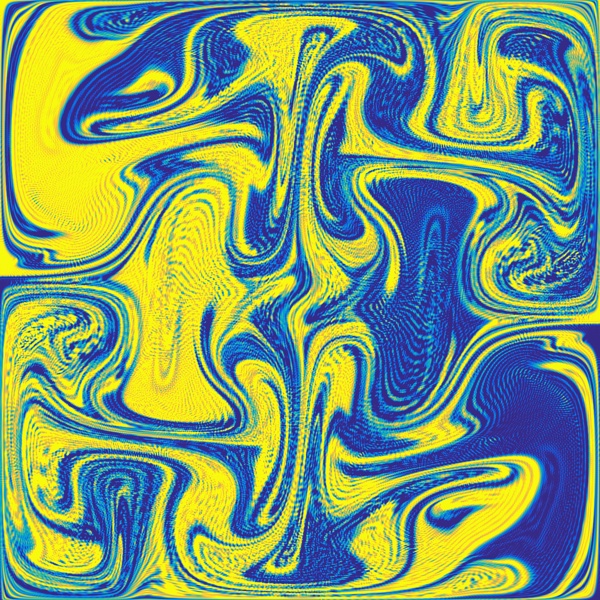}
    \caption{$t=0.4$}
  \end{subfigure}
  \begin{subfigure}[b]{0.16\textwidth}
    \includegraphics[width=\textwidth]{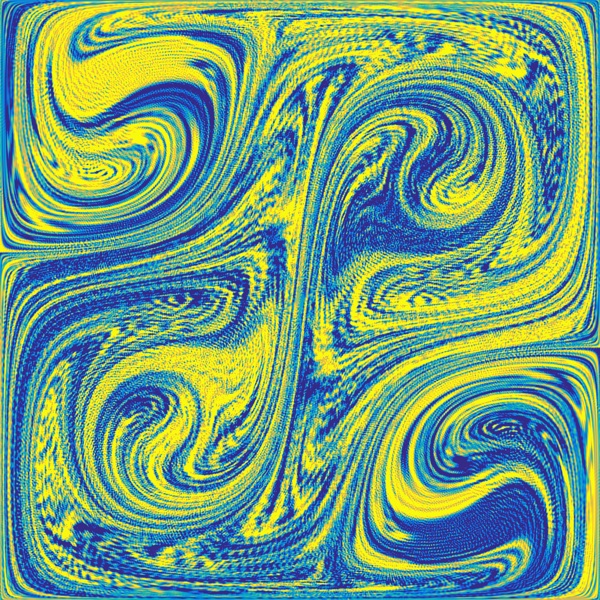}
    \caption{$t=0.6$}
  \end{subfigure}
  \begin{subfigure}[b]{0.16\textwidth}
    \includegraphics[width=\textwidth]{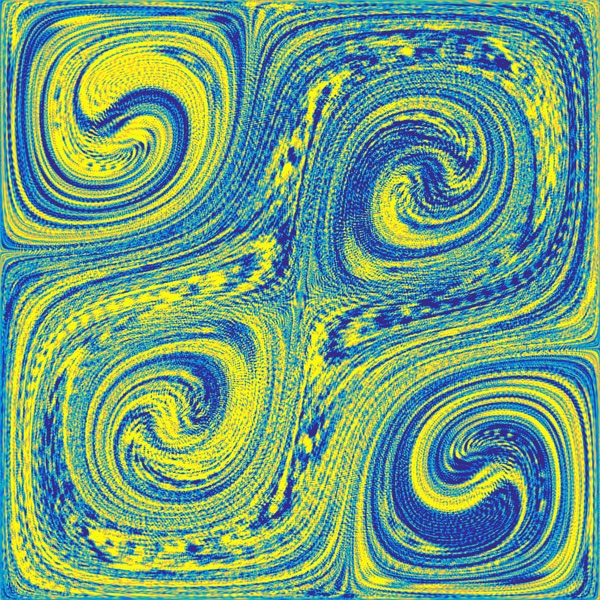}
    \caption{$t=0.8$}
  \end{subfigure}
  \begin{subfigure}[b]{0.16\textwidth}
    \includegraphics[width=\textwidth]{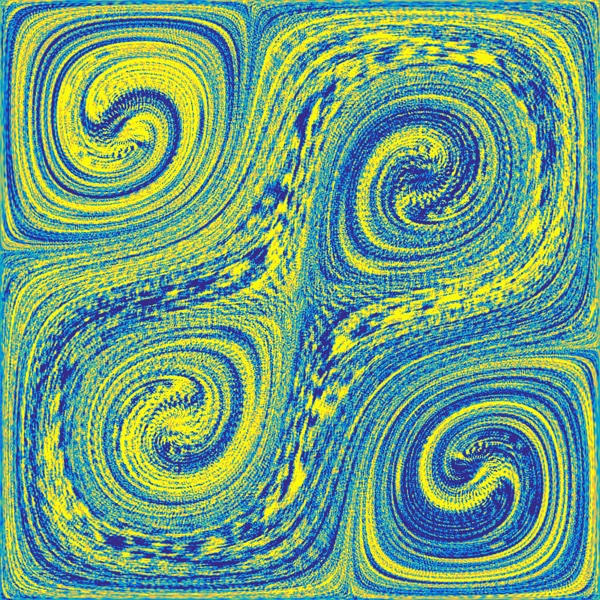}
    \caption{$t=1.0$}
  \end{subfigure}
  \caption{Scalar field $\theta_h$ under the optimized $N=4$ cellular flow.}
  \label{fig:theta_Cellular-N4}
\end{figure}

\begin{figure}[H]
  \centering
  \begin{subfigure}[b]{0.32\textwidth}
    \centering
    \includegraphics[width=\textwidth]{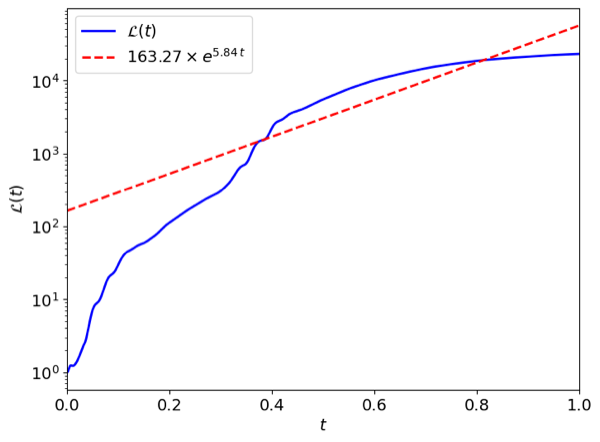}
    \caption{Interface length $\mathcal{L}(t)$.}
    \label{fig:Cellular_N4_length}
  \end{subfigure}
  \begin{subfigure}[b]{0.32\textwidth}
    \centering
    \includegraphics[width=\textwidth]{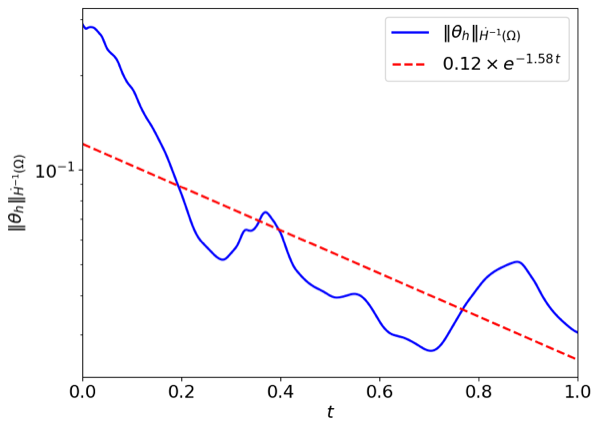}
    \caption{$\dot{H}^{-1}$ mix-norm.}
    \label{fig:Cellular_N4_mixnorm}
  \end{subfigure}
  \begin{subfigure}[b]{0.32\textwidth}
    \centering
    \includegraphics[width=\textwidth]{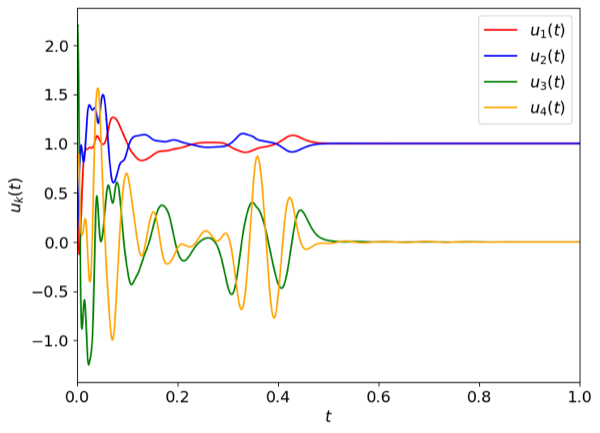}
    \caption{Optimal $u_1,\dots,u_4$.}
    \label{fig:Cellular_N4_u1u2u3u4}
  \end{subfigure}
    \caption{Quantitative results under the optimized $N=4$ cellular flow.}
  \label{fig:Cellular_N4_mixnorm-length}
\end{figure}

The controls (Figure~\ref{fig:Cellular_N4_u1u2u3u4}) reveal the mechanism behind this behavior. The higher-frequency modes $u_3$ and $u_4$ are active only during the initial transient $t< 0.5$, after which they decay to zero and the controls revert to the low-mode $N=2$ behavior. The optimizer thus discovers that the additional modes are beneficial only for short-time stirring; sustained use of the higher harmonics creates fine-scale vortex cells that trap the scalar locally rather than transporting it across the domain.

Concretely, the local-vs-global mismatch noted in Remark~\ref{rem:local_global} explains the gap: the higher-frequency modes $u_3, u_4$ concentrate the additional stretching inside small-scale vortex cells where the scalar is trapped locally, so a much longer interface does not translate into faster global $\dot{H}^{-1}$ decay.

\section{Conclusions and perspectives}\label{sec:conclusions}

\subsection{Conclusions}
We proposed a reduced-order optimal control framework whose objective is the length of an advected material interface. Within a finite-dimensional smooth stream-function ansatz we established existence of a minimizer (Theorem~\ref{prop:H3}), derived the continuous adjoint and a discrete-consistent gradient (Remark~\ref{rem:discrete_exact}), and showed on cellular and Doswell benchmarks that the optimizer is faster than the Eulerian Sobolev-norm optimizer of \cite{hu2026structure} while producing exponential-type interface and mix-norm decay. The $N=4$ experiment shows the surrogate is useful but not fully faithful, in line with Glowinski's two-grid philosophy~\cite{Glowinski1992} that optimization on a well-chosen reduced model can be a useful companion to direct PDE-based optimization.

\subsection{Perspectives}
Several directions for future work arise.
\begin{itemize}
\item \textbf{Beyond interface length: sharper Lagrangian surrogates.}
A natural direction is to characterize the $\dot{H}^{-1}$-mixing functional more sharply in terms of the evolving interface, complementing partial results on exponential mixing rates \cite{alberti2016exponential, crippa2017cellular, brue2024enhanced}, and to design adaptive control-dimension strategies motivated by the non-monotone $N=4$ behavior. One concrete refinement replaces the local length functional by a non-local interface-supported $\dot{H}^{-1}$ energy of the line measure $\mu_{\Gamma_t}=\mathcal H^1\!\lfloor_{\Gamma_t}$ (Remark~\ref{rem:critical_mixnorm}), rewarding both elongation and spatial dispersion.

\item \textbf{Mixed fine-forward/reduced-adjoint strategy.} In the spirit of Glowinski's two-grid philosophy~\cite{Glowinski1992}, evolving the scalar on a fine Eulerian grid while computing adjoints on the reduced Hamiltonian model would couple PDE accuracy to the cheap reduced adjoint.
\end{itemize}

\section*{Acknowledgments}
The authors thank Yongcun Song for helpful discussions.

\bibliographystyle{abbrv}
\bibliography{refs_mixing.bib}

\end{document}